\newcommand{\Z}{\mathbb{Z}}
\newcommand{\N}{\mathbb{N}}
\def\rar{\rightarrow}
\def\Rar{\Rightarrow}
\def\CO{{\mathcal{O}}}
\theoremstyle{plain}
\newtheorem{theorem}{Theorem}[section]
\newtheorem{proposition}[theorem]{Proposition}
\newtheorem{lemma}[theorem]{Lemma}
\newtheorem{definition}[theorem]{Definition}
\theoremstyle{remark}
\newtheorem{remark}[theorem]{Remark}
\newcommand{\aisle}[1]{\ensuremath{\mathcal{#1} ^{\leq 0}}}
\newcommand{\caisle}[1]{\ensuremath{{\sf{Aisle ^{cp} _{\otimes}}}(#1)}}
\newcommand{\cat}[1]{\ensuremath{\mathcal{#1}}}
\newcommand{\coaisle}[1]{\ensuremath{\mathcal{#1} ^{\geq 0}}}
\newcommand{\derive}[1]{\ensuremath{\mathbf{D}_{\text{qc}}(#1)}}
\newcommand{\dperp}[1]{\ensuremath{^{\perp}(\mathcal{#1}^{\perp})}}
\newcommand{\dperpn}[1]{\ensuremath{^{\perp}(\mathcal{#1}[\mathbb N]^{\perp})}}
\newcommand{\fil}[1]{\ensuremath{{\sf{Thom}^{fil}}(#1)}}
\newcommand{\gsupp}[1]{\ensuremath{\varphi _{#1}}}
\newcommand{\Hom}{\ensuremath{\mathrm{Hom}}}
\newcommand{\perfect}[1]{\ensuremath {\mathrm{{Perf}}{(#1)}}}
\newcommand{\perfn}[1]{\ensuremath {\mathrm{{Perf}}^{\leq N}{(#1)}}}
\newcommand{\pre}[1]{\ensuremath{^{\leq #1}}}
\newcommand{\preaisle}[1]{\ensuremath{\langle \mathcal{#1} \rangle ^{\leq 0} }}
\newcommand{\prim}[1]{\ensuremath{{#1}^{\prime}}}
\newcommand{\scr}[1]{\ensuremath{\mathscr{#1}}}
\newcommand{\spec}[1]{\ensuremath{\mathrm{Spec } #1}} 
\newcommand{\Supph}{\ensuremath{\mathrm{Supph}}}
\newcommand{\Supp}{\ensuremath{\mathrm{Supp}}}
\newcommand{\taisle}[1]{\ensuremath{\langle \mathcal{#1} \rangle ^{\leq 0}_{\otimes}}}
\newcommand{\tensor}{\ensuremath{\otimes}-}
\title[Compactly generated tensor t-structures]{Compactly generated tensor t-structures on 
the derived category of a Noetherian scheme}
\date{} 
\dedicatory{}
\author[Umesh V Dubey]{Umesh V Dubey}
\address{Harish-Chandra Research Institute,  A CI of Homi Bhabha National Institute,  Chhatnag 
Road,  Jhunsi,  Prayagraj  211019,  India.}
\curraddr{}
\email{umeshdubey@hri.res.in}
\thanks{}
\author[Gopinath Sahoo]{Gopinath Sahoo}
\address{Harish-Chandra Research Institute,  A CI of Homi Bhabha National Institute,  Chhatnag 
Road,  Jhunsi,  Prayagraj  211019,  India.}
\curraddr{}
\email{gopinathsahoo@hri.res.in}
\email{gsahoo@math.tifr.res.in}
\thanks{}
\begin{document}

\begin{abstract}

	We introduce a tensor compatibility condition for t-structures.  For any Noetherian scheme
	$X$, we prove that there is a one-to-one correspondence between the set of 
	Thomason filtrations and the set of aisles of compactly generated tensor compatible 
	t-structures on the derived category of $X$.  This generalizes the earlier classification of 
	compactly generated t-structures for commutative rings to schemes.  Hrbek and Nakamura 
	have reformulated the famous telescope conjecture for t-structures.  As an application of our 
	main theorem, we prove that a tensor version of the conjecture is true for separated 
	Noetherian schemes.  
	
\end{abstract}

\maketitle

\tableofcontents

\footnotetext{2020 \textit{Mathematics Subject Classification. }Primary 14F08,  secondary 
18G80. } 
\footnotetext{\textit{Key words and phrases.} Derived categories, t-structures, perfect complexes, telescope conjecture.}


\section*{Introduction}

The classification of subcategories in terms of subsets of the ambient space, in the setting of 
derived categories, first appeared in the work of Hopkins.  Inspired by
the analogous result in the finite stable homotopy category \cite[Theorem 7]{HS98},  Hopkins 
obtained that for a 
commutative ring $R$ there is a one-to-one correspondence between thick subcategories of 
perfect complexes on $R$ and the specialization closed subsets of $\spec R$ 
\cite[Theorem 11]
{Hop87}.   Neeman intrigued by this parallel between the stable homotopy category and the 
derived category conducted further investigation.  He obtained the classification of localizing 
subcategories of the unbounded derived category $\mathbf{D}(R)$,  which on restricting to the 
perfect complexes $\perfect R$ provides Hopkins' theorem \cite[Theorem 1.5]{Nee92}.  
However,  Neeman showed all this is true for Noetherian rings and that extra care is needed for 
arbitrary rings; see \cite[Example 4.1]{Nee92}.

Thomason took a step further and generalized the results of Hopkins and Neeman to schemes.
For similar classification to hold in the case of schemes he observed that a tensor compatible 
condition on thick subcategories of $\perfect X$ has to be considered; a condition which all 
thick subcategories satisfy in the affine situation.  His second important observation was that 
for the non-Noetherian case specialization closed subsets were to be replaced by subsets 
stronger than specialization closed; such subsets are now being called Thomason subsets
after him; see Definition \ref{D Thomason subset}.  Thus he obtained for a quasi-compact 
quasi-separated scheme $X$,  a bijective 
correspondence between thick \tensor ideals of $\perfect X$ and Thomason subsets of $X$ 
\cite[Theorem 3.15]{Thomason},  thereby,  closing the gap pointed out by Neeman and 
establishing the correct bijective correspondence for arbitrary commutative rings; see his 
commentaries \cite[History 3.17]{Thomason}.  Similar classifications in the situation of 
Noetherian formal schemes \cite[Theorem 4.12]{AJS04} and for Noetherian graded rings 
\cite[Theorem 5.8]{AS13} have been achieved.  

All these works consider suitable triangulated 
subcategories in the derived category or perfect complexes. Another important class of 
subcategories, not necessarily triangulated,  is the class 
of t-structures  introduced in\cite{BBD} by Be\u{\i}linson et al. in their study of 
perverse sheaves.  Stanley showed the class of t-structures on $\mathbf{D}(\Z)$ is not a 
set \cite[Corollary 8.4]{Sta10} and therefore it is not feasible to classify all t-structures of the 
unbounded derived category in terms of subsets of $\spec R$.  However,  a subclass of 
t-structures on $\mathbf{D}(R)$, precisely the compactly generated ones, has been classified in 
terms of filtrations of specialization closed subsets by Alonso et al. \cite[Theorem 3.11]{AJS10},  
for $R$ Noetherian.  Hrbek has extended this classification to arbitrary commutative rings,  
\cite[Theorem 5.1]{Hrbek}.  Although he obtained a one-to-one bijection between the set of 
compactly generated t-structures and the set of  Thomason filtrations,  unlike the 
Noetherian case,  it is not clear that the aisle of such a t-structure is determined by 
cohomological supports; see the introduction to section 5 of \cite{Hrbek}.

In this article, we generalize the classification of compactly generated t-structures to 
Noetherian schemes. First, we introduce the notion of tensor t-structures; an almost similar 
notion has been studied in \cite[\S 5]{AJS03} with a different goal in mind.  Such t-structures 
localize well to the open subsets.  For a Noetherian scheme $X$ we prove:

\begin{theorem}
	There is a bijective correspondence between the set of aisles of compactly generated
	tensor t-structures on \derive X and the set of Thomason filtrations of $X$. 
	{\emph{(For more details see Theorem \ref{Theorem 1}.)}}
\end{theorem}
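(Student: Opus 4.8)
Start by constructing the two maps. Given a filtration $\Phi = (\Phi^n)_{n \in \Z}$ of Thomason subsets of $X$ (so $\Phi^n \supseteq \Phi^{n+1}$, each $\Phi^n$ Thomason), I would attach the aisle generated, in the tensor sense, by the shifted structure sheaves of closed subschemes supported on $\Phi^n$: concretely something like $\taisle{U}_\Phi = \langle \CO_Z[-n] : n \in \Z,\ \Supp(\CO_Z) \subseteq \Phi^n \rangle^{\leq 0}_\otimes$, the smallest cocomplete pre-aisle closed under the monoidal action of $\perfect X$. For the reverse map, given a compactly generated tensor aisle $\mathcal{U}$, assign the filtration $n \mapsto \bigcup \{ \Supph(E) : E \in \perfect X \cap \mathcal{U}[n] \}$, i.e.\ the cohomological supports of the compact objects lying in the $n$-th shift of the aisle. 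The first thing to check is that each of these is well-defined: that $\Phi \mapsto \taisle{U}_\Phi$ lands among compactly generated tensor aisles (compact generation is built in; the tensor-compatibility needs the action of $\perfect X$ to preserve the generators up to the class being generated, which should follow since $\CO_Z \otimes \perfect X$ stays supported on $\Supp \CO_Z$), and that the support-filtration of a compactly generated tensor aisle really consists of Thomason subsets and is descending — descending is immediate from $\mathcal{U} \subseteq \mathcal{U}[1]$, and Thomason-ness should come from Thomason's classification \cite[Theorem 3.15]{Thomason} applied to the thick $\otimes$-ideal generated by the relevant compact objects.

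Next, prove the two composites are identities. For $\Phi \mapsto \taisle{U}_\Phi \mapsto (\text{supports})$, the inclusion $\supseteq$ is clear because the generators $\CO_Z[-n]$ with $\Supp \CO_Z \subseteq \Phi^n$ have support $\Phi^n$ (ranging over all such $Z$ recovers all of $\Phi^n$ since $\Phi^n$ is Thomason, hence a union of closed sets with quasi-compact complement); the inclusion $\subseteq$ requires showing that every compact object in $\taisle{U}_\Phi[n]$ has cohomological support inside $\Phi^n$, which is a support-estimate one propagates along the generation process (extensions, coproducts, positive shifts, and the $\perfect X$-action all only shrink or preserve support in the appropriate degree). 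For the other composite, start from a compactly generated tensor aisle $\mathcal{U}$ with support-filtration $\Phi$; one inclusion says $\taisle{U}_\Phi \subseteq \mathcal{U}$, which holds because the generators $\CO_Z[-n]$ lie in $\mathcal{U}$ — by definition of $\Phi^n$, any compact object with support in $\Phi^n$ is "reachable", and here the key technical input is that on a Noetherian scheme an aisle is determined by, and contains, all compact objects whose cohomology sheaves are supported correctly, which is exactly where the Noetherian hypothesis and the cohomological-support description (à la \cite[Theorem 3.11]{AJS10}) enter. The reverse inclusion $\mathcal{U} \subseteq \taisle{U}_\Phi$ is the substantive direction: a compactly generated aisle is generated by its compact objects, so it suffices to show each compact $E \in \mathcal{U}$ lies in $\taisle{U}_\Phi$; writing $E$ via its (finite, since $X$ Noetherian and $E$ perfect) Postnikov/truncation tower, each cohomology sheaf $H^i(E)$ is a coherent sheaf supported on $\Phi^i$ (because $E \in \mathcal{U}$ forces $\Supph(E) \subseteq \Phi^0$ and the shifts give the graded statement), and a coherent sheaf supported on a Thomason subset is built from structure sheaves $\CO_Z$ with $\Supp \subseteq \Phi^i$ by finitely many extensions — so $E$ is assembled from the allowed generators.

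The engine making all of this run is localization to opens: a tensor t-structure restricts to every open $U \subseteq X$, and by Noetherian induction (or a Mayer–Vietoris / Thomason-style gluing argument) one reduces statements about $X$ to the affine case, where the result is \cite[Theorem 3.11]{AJS10} / \cite[Theorem 5.1]{Hrbek}. So I would interleave the above with a reduction step: cover $X$ by affine opens, use that $\Supph$, Thomason subsets, filtrations, and the aisle all behave well under restriction $j^* : \derive X \to \derive U$ and (for the gluing) under $j_*$, and patch the local identifications.

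\medskip

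\textbf{Main obstacle.} The hard part will be the direction $\mathcal{U} \subseteq \taisle{U}_\Phi$ globally — equivalently, showing that a compactly generated tensor aisle on $\derive X$ is genuinely controlled by cohomological supports of its compact objects. Over an affine scheme this is Alonso–Jeremías–Saorín, but globally the tensor-compatibility condition is doing real work: it is what forces the aisle to restrict compatibly to opens and to be stable under the $\perfect X$-action, and without it (as Hrbek's non-Noetherian remarks suggest) the support description can fail. Concretely, I expect the technical crux to be: given a perfect complex $E$ in the aisle, decompose it along its cohomology sheaves and realize each coherent $H^i(E)$ as an iterated extension of $\CO_Z$'s with the correct support lying in the aisle — this needs both the tensor-ideal structure of Thomason's classification (to know which $\CO_Z$ are available) and a gluing argument (since $E$ and its cohomology sheaves are global objects, not affine-local ones), and reconciling the two is where the proof will require the most care.
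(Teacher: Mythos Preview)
Your proposal has two genuine gaps. First, the generators $\CO_Z[-n]$ are not compact: for a closed subscheme $Z \subseteq X$ the sheaf $\CO_Z$ is perfect only when $Z$ is a local complete intersection, so the claim that ``compact generation is built in'' to $\taisle{U}_\Phi$ is false. The paper handles this quite differently: it defines $\cat U_\phi$ by the support condition $\Supp(H^i(E)) \subseteq \phi(i)$ for all $i$ (not via generators), and then proves compact generation as a separate theorem (Theorem~\ref{Compact generation}) by exhibiting $\cat S_\phi = \bigcup_i \mathrm{Perf}^{\leq i}_{\phi(i)}(X)$ as a generating set. This in turn requires the extension results of \S\ref{Ext}, which let one lift perfect complexes with prescribed support and amplitude from affine opens to $X$---that machinery is precisely what replaces your use of $\CO_Z$. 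Second, your filtration $n \mapsto \bigcup\{\Supph(E) : E \in \perfect X \cap \cat U[n]\}$ is constant in $n$: the cohomological support $\Supph$ is invariant under shift, so $\Supph(F[n]) = \Supph(F)$ and the union does not depend on $n$. The correct assignment (Definition~\ref{Phi U}) is $\gsupp{\cat U}(i) = \bigcup_{E \in \cat U} \Supp(H^i(E))$, tracking the support degree by degree.

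There is also a circularity in your argument for $\taisle{U}_\Phi \subseteq \cat U$: you assert that the generators $\CO_Z[-n]$ lie in $\cat U$ whenever $Z \subseteq \Phi^n$, but knowing that $\Phi^n$ is the union of supports of cohomologies of objects in $\cat U$ does not tell you that every coherent sheaf supported on $\Phi^n$ (shifted into degree $n$) belongs to $\cat U$---that is essentially the statement $\cat U = \cat U_\phi$ you are trying to prove. The paper's route around this is Lemma~\ref{Lemma 2}: for perfect complexes $A,B$ with $\Supph^{\geq i}(A) \subseteq \Supph^{\geq i}(B)$ for all $i$, one has $A \in \langle B\rangle^{\leq 0}_\otimes$. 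This is proved by exactly the local--global principle you sketch at the end (restrict the truncation triangle to affines, invoke \cite{AJS10,Hrbek}, conclude the truncation map is an isomorphism), but it is applied directly to perfect complexes rather than to your Postnikov decomposition into non-perfect pieces. Your overall framework---define maps both ways, reduce to affines via the tensor condition---is correct; what needs to change is working throughout with perfect complexes and degree-wise supports rather than with structure sheaves and total supports.
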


Though our approach is close to Thomason's proof  of \cite[Theorem 3.15]{Thomason} in spirit, 
our proof differs in two aspects.  Since the aisle of a tensor t-structure is rarely a \tensor ideal,  
one of the key ingredients of \cite{Thomason} - the Tensor Nilpotence Theorem or any naive 
variation of it,  is not helpful in our case.  Instead, we use a local global principle to obtain
Lemma \ref{Lemma 2} the counterpart of \cite[Lemma 3.14]{Thomason}.  Another 
obstacle was extending perfect complexes on an open subscheme to perfect complexes on 
the ambient scheme along an aisle.  Thomason and Trobaugh in their seminal 
paper \cite{TT90} give a criterion for the extension of perfect complexes using $K_0$ groups; 
see \cite[Lemma 5.6.2]{TT90}.  In the case of t-structures, one needs to consider the supports 
of cohomology sheaves component-wise,  instead of their union as in the stable case.  Thus,
we needed to have extensions of perfect complexes with some boundedness 
assumption.  This we achieve after closely inspecting and later modifying various results of 
\cite[\S 5]{TT90}; see section \ref{Ext}.

A Bousfield subcategory of a triangulated category is smashing if the corresponding 
localization functor preserves coproducts.  The telescope conjecture asks if all such smashing 
subcategories are compactly generated.  The question was originally asked by Ravenel for the 
stable homotopy category of spectra \cite{Ravenel},  in this case, the answer remains elusive.  
However,  in the 
case of algebraic triangulated categories, there are many positive results.  The first one is due 
to Neeman\cite[Corollary 3.4]{Nee92}. For a detailed discussion on the history of the 
conjecture and various positive results in this direction see the introduction of \cite{Antieau}.  
Since Bousfield subcategories correspond to stable t-structures, it is natural to seek a 
reformulation of the conjecture which encompasses t-structures.  Hrbek and Nakamura have 
formulated the telescope conjecture for homotopically smashing t-structures in the language of 
derivators; see \cite[Question A.7]{HN21}.  They proved for a commutative Noetherian ring $R$ 
the telescope conjecture for homotopically smashing t-structures is true,  more precisely,  any 
homotopically smashing t-structure on $\mathbf{D}(R)$  is compactly generated 
\cite[Theorem 1.1]{HN21}.  As an application of Theorem \ref{Theorem 1}, we obtain, for a 
separated Noetherian scheme $X$:
\begin{theorem} 
	Any homotopically smashing tensor t-structure on $\mathbf{D}(\mathrm{Qcoh}(X))$ is 
	compactly generated.  {\emph{(For more details see \S \ref{section 5})}}
\end{theorem}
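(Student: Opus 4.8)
The plan is to deduce this theorem as a corollary of Theorem~\ref{Theorem 1} by showing that every homotopically smashing tensor t-structure on $\mathbf{D}(\mathrm{Qcoh}(X))$ is in fact compactly generated, following the strategy used by Hrbek and Nakamura in the affine case \cite{HN21} but now routed through our classification. First I would recall that a homotopically smashing t-structure is in particular \emph{smashing} in the classical sense: its aisle is closed under coproducts, and the associated coaisle is closed under coproducts as well (this is exactly what the derivator-level formulation in \cite[Question A.7]{HN21} encodes). The key structural input is that on a separated Noetherian scheme $\mathbf{D}(\mathrm{Qcoh}(X)) \simeq \mathbf{D}_{\mathrm{qc}}(X)$ is compactly generated by perfect complexes, and the tensor structure is available; so the notion of a tensor t-structure makes sense and Theorem~\ref{Theorem 1} applies.

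The core steps I would carry out are the following. (1) Show that any homotopically smashing tensor t-structure $(\cat T^{\leq 0}, \cat T^{\geq 0})$ has an aisle which is generated, as an aisle, by a set of objects — but one must bootstrap from ``smashing'' to ``generated by \emph{perfect} complexes''. The standard move: for a smashing t-structure the coaisle $\cat T^{\geq 1}$ is a localizing subcategory-like object closed under coproducts and products, and one extracts a set of compact (perfect) generators by a purity/filtered-colimit argument — this is where ``homotopically smashing'' (as opposed to merely smashing) is used, since it guarantees that the truncation functors commute with filtered homotopy colimits, allowing one to write the truncation of any object as a filtered colimit of truncations of perfect complexes. (2) Deduce that the t-structure is compactly generated, i.e. $\cat T^{\leq 0} = \taisle{S}$ for a set $S$ of perfect complexes; since it is moreover a tensor t-structure, the generating set can be taken tensor-closed or at least the resulting aisle is a tensor aisle, so $(\cat T^{\leq 0},\cat T^{\geq 0})$ lands in the class classified by Theorem~\ref{Theorem 1}. (3) Conclude: being in the image of the bijection of Theorem~\ref{Theorem 1} means it corresponds to a filtration of Thomason subsets, which by construction is the aisle of a \emph{compactly generated} tensor t-structure — so the original t-structure is compactly generated.

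The main obstacle, and the step that needs genuine work, is step~(1): extracting perfect (compact) generators from the mere hypothesis of homotopic smashing. In the affine Noetherian case \cite{HN21} this rests on the Noetherian hypothesis ensuring good behaviour of injective/flat objects and on the fact that homotopically smashing t-structures on $\mathbf{D}(R)$ are automatically \emph{definable} (closed under directed colimits and pure subobjects), which then forces the aisle to be determined by a cohomological support datum. Globalizing this to $X$ requires either (a) reducing to the affine case by covering $X$ with affine opens and using that tensor t-structures localize well to opens — precisely the localization property emphasized in the introduction — and then gluing the local cohomological support data into a global filtration of Thomason subsets; or (b) proving a scheme-level definability statement directly. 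I would pursue route (a): restrict the homotopically smashing tensor t-structure to each affine open $U = \spec R_i$, apply \cite[Theorem 1.1]{HN21} to conclude it is compactly generated there, hence corresponds to a filtration of specialization-closed (= Thomason, since Noetherian) subsets of $U$; check these local filtrations agree on overlaps (using the uniqueness in the classification) and glue to a global filtration of Thomason subsets of $X$; finally invoke Theorem~\ref{Theorem 1} to see that this filtration corresponds to a compactly generated tensor t-structure whose aisle must, by the injectivity in Theorem~\ref{Theorem 1} together with the fact that both t-structures restrict to the same thing on an affine cover, coincide with the original aisle. The delicate points are verifying that restriction of a homotopically smashing tensor t-structure to an open is again homotopically smashing (plausible since homotopic smashing is a colimit-commutation condition preserved by the exact restriction functor $j^*$), and that an aisle in $\mathbf{D}_{\mathrm{qc}}(X)$ is detected on an affine cover — which should follow from the tensor-compatibility built into our setup together with the local-global principle already used to prove Lemma~\ref{Lemma 2}.
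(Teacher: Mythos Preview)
Your route (a) is exactly the paper's proof: restrict the tensor t-structure to each affine open, check the restriction is again homotopically smashing (the paper's Lemma~\ref{homotopically restricts}, proved via $j_*$/$j^*$ adjunction just as you suggest), apply Hrbek--Nakamura there, and then run the local--global argument via the truncation triangle as in Lemma~\ref{Lemma 2}/Lemma~\ref{localizes to open}. The only cosmetic difference is that the paper does not ``glue'' local filtrations: it takes the global graded support $\varphi_{\cat U}$ of $\cat U$ from the outset, shows $\varphi_{\cat U}|_U$ is the graded support of $\cat U|_U$, deduces $\cat U|_U = \cat U_{(\varphi_{\cat U}|_U)}$ from the affine case, and then proves $\cat U = \cat U_{\varphi_{\cat U}}$ by showing $\tau^{\leq}_{\cat U}A \to A$ is an isomorphism on every affine open for $A \in \cat U_{\varphi_{\cat U}}$ --- this sidesteps any overlap-compatibility check, but is otherwise your argument.
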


This provides yet another proof of the tensor telescope conjecture in this case; see Remark
\ref{remark 3} and also \cite[Corollary 6.8]{BF11}.

In section \ref{section 1} we discuss the basic facts about t-structures, we recall that 
a t-structure is completely determined by its aisle.  The notion of compactly generated 
t-structure appears in various guises.  Though all these notions are known to be 
equivalent,  a proof seems to be missing in the literature.  Hence we give a complete treatment
of such t-structures in this section.  In section \ref{section 2} we introduce tensor t-structures 
and \tensor aisles of a tensor triangulated category and collect some basic facts about them.  
We show the relation between the new notion of \tensor aisle and the earlier notion of \tensor 
ideal; see Proposition \ref{tensor ideal}.  In the third section,  for a Noetherian scheme $X$, we 
prove that the associated subcategory of a
Thomason filtration is a compactly generated \tensor aisle of \derive X; see Theorem 
\ref{Compact generation}.  In section \ref{section 4}, we prove the main classification result.
In the last section,  we discuss the telescope conjecture for homotopically smashing 
t-structures and prove that a tensor version of the conjecture is true for separated Noetherian
schemes.

\section{Preliminaries}
\label{section 1}
 
 \subsection*{Convention} We always assume a triangulated category \cat T has all 
 small coproducts unless otherwise stated. 

\subsection{Basics on t-structures}

Let $\cat T$ be a triangulated category.  The notion of t-structure
is introduced by Be\u{\i}linson,  Bernstein, and Deligne in \cite[D\'{e}finition 1.3.1]{BBD}. 

\begin{definition}
A \emph{t-structure} on $\cat T$  is a pair of full subcategories $(\aisle T , \coaisle T)$ 
satisfying the following conditions :
\begin{itemize}
	\item[(t1)] For $A \in \aisle T $ and $B \in \coaisle T [-1]$,  $\Hom (A, B)  = 0$;
	\item[(t2)] $\aisle T [1] \subset \aisle T $ and $\coaisle T [-1] \subset \coaisle T $;
	\item[(t3)] For any $T \in \cat T$,  there is a distinguished triangle 
	\begin{center}
		$A \rar T \rar B \rar A[1]$
	\end{center}	 
	 such that $A \in \aisle T $ and $B \in \coaisle T [-1]$. 
\end{itemize}
\end{definition}

The triangle in (t3) is unique and we call it the t-decomposition triangle of $T$ for
(\aisle T, \coaisle T).  The full subcategory \cat H = \aisle T $\cap$ \coaisle T of \cat T is called 
the \emph{heart} of the t-structure (\aisle T, \coaisle T).  It is an abelian category and there is a 
natural cohomological functor $H^0 : \cat T \rar \cat H$.

In \cite{KV88} Keller and Vossieck observed that the pair (\aisle T, \coaisle T) is completely
determined by the subcategory \aisle T.  They characterized such subcategories and termed 
them as \emph{aisles} of \cat T.  We recall the precise definition of aisle \cite[1.1]{KV88}.

\begin{definition}
A full subcategory \cat U is an \emph{aisle} of \cat T if it satisfies the following conditions:
\begin{itemize}
	\item[(a1)] closed under positive shifts,  i.e.  $ \cat U [1] \subset \cat U$;
	\item[(a2)] closed under extensions, i.e.  for each distinguished triangle 
	\[ A \to B \to C \to A[1]\] in \cat T,
	 if $A$ and $C$ are in \cat U then $B$ is in \cat U;
	\item[(a3)] the inclusion $\cat U \rar \cat T $ admits a right adjoint denoted by $\tau _{\cat 
	U} ^{\leq}$. 

\end{itemize}

The functor $\tau _{\cat U} ^{\leq}$  is called the truncation functor associated with \cat U.  
\end{definition}

For any subcategory \cat U of \cat T, we denote $\cat U ^{\perp}$ to be the full subcategory 
consisting of objects $B \in \cat T$ such that $\Hom (A, B) = 0$ for all $A \in \cat U$.  
Analogously we define $^{\perp} \cat U $ to be the full subcategory of objects  
$B \in \cat T$ such that $\Hom (B, A) = 0$ for all $A \in \cat U$. 

The assignments, (\aisle T, \coaisle T) $\mapsto$ \aisle T and 
\cat U $\mapsto$ (\cat U,  \cat U$^{\bot}$[1]) give a mutually inverse bijective correspondence 
between the aisles of \cat T and the t-structures on \cat T; see \cite[\S 1]{KV88}.   The 
subcategory $ (\aisle T) ^{\perp} = \coaisle T[-1]$ is called the coaisle of the t-structure 
(\aisle T,  \coaisle T).

We define a few notions weaker than aisle; these have been introduced in \cite[\S 1]{AJS03}.

\begin{definition}
 A full subcategory \cat U is a \emph{preaisle} of \cat T if \cat U is closed under positive 
 shifts and extensions,  that is,  \cat U satisfies (a1) and (a2).  We call a preaisle \emph{stable}
 if it is closed under negative shifts.  A preaisle is a 
 \emph{thick preaisle} if it is closed under taking direct summands in \cat T.  A preaisle
 that is closed under taking small coproducts in \cat T is called a \emph{cocomplete preaisle}.  
 \end{definition}
 
 A cocomplete preaisle is thick by the Eilenberg swindle argument; 
 see \cite[Corollary 1.4.]{AJS03}.  By \cite[Lemma 1.3.]{AJS03} aisles are cocomplete preaisles.

\subsection{Compactly generated t-structures} 

A triangulated subcategory $\cat C$ of \cat T is \emph{thick} if it is closed 
under direct summands.  A thick subcategory is \emph{localizing} if it is closed under taking 
coproducts in \cat T.  If \cat S $\subset$ \cat T is a class of objects,  then 
$\langle \cat S \rangle$ denotes the smallest localizing subcategory of \cat T containing \cat S;
see for instance \cite[Definition 1.12]{Neeman}.  Following similar customs,  we denote the 
smallest cocomplete preaisle containing \cat S by \preaisle S and call it the \emph{cocomplete 
preaisle generated by \cat S.}

An object $C \in$ \cat T is \emph{compact} if for any collection of objects $\{A_i \}$
in $\cat T$ the natural map \[\coprod_i \Hom (C, A_i)\rar  \Hom (C,  \coprod_i A_i ) \] is an 
isomorphism. We denote the full subcategory of compact objects of $\cat T$ by $\cat T ^c$; 
the subcategory $\cat T^c$ is a thick subcategory of $\cat T$.  In contrast to our convention 
$\cat T^c$ need not have coproducts.

\begin{definition}
\label{definition}
 A preaisle \cat U is \emph{compactly generated} if  \cat U = \preaisle S for a set of compact 
 objects \cat S.   We say an aisle is \emph{compactly generated} if it is a compactly generated 
 preaisle.  A t-structure is \emph{compactly generated} if the aisle of the 
 t-structure is compactly generated.
\end{definition}

We state a result of Keller and Nicol\'{a}s.
 
\begin{proposition}[\text{\cite[Theorem A.7]{KP13}}]
\label{keller}
Let  \cat S be a set of compact objects of \cat T.  Then,

	\begin{itemize}
	\item[(1)] \preaisle S is an aisle of \cat T.
	
	\item[(2)] Every object $A$ of \preaisle S fits in a triangle
	\[ \coprod_{i \geq 0} A_i \rar A \rar \coprod_{i \geq 0} A_i[1] \rar \coprod_{i \geq 0} A_i[1] \]
	where $A_i$ is an $i$-fold extension of small coproducts of non negative shifts 
	of objects of \cat S.
	\end{itemize}

\end{proposition}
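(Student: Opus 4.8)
The plan is to prove (1) and (2) at once, by producing, for an \emph{arbitrary} $A \in \cat T$, a $t$-decomposition triangle whose aisle part is built from $\cat S$ exactly as in (2); specialising to $A \in \preaisle S$ then yields (2), and the existence of such decompositions for all $A$ yields (1). First I would reduce to the case that $\cat S$ is closed under positive shifts: replacing $\cat S$ by $\{\,S[n] : S \in \cat S,\ n \ge 0\,\}$ keeps it a set of compact objects, does not change $\preaisle S$ (positive shifts of objects of $\cat S$ already lie in $\preaisle S$), and turns ``non-negative shift of an object of $\cat S$'' into ``object of $\cat S$''.

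For fixed $A$, I would build a tower $Y_0 \rar Y_1 \rar \cdots$ with compatible maps $\phi_n : Y_n \rar A$. Take $Y_0 := \coprod_{S \in \cat S,\ \mu \in \Hom(S,A)} S$ with $\phi_0$ the evaluation map, so that $\Hom(S, Y_0) \rar \Hom(S, A)$ is already surjective for every $S \in \cat S$. Given $\phi_n$, with cone $C_n$ and connecting map $\delta_n : C_n \rar Y_n[1]$, set $E_n := \coprod_{S \in \cat S,\ \alpha \in \Hom(S, C_n)} S$ and let $Y_{n+1} := \mathrm{cone}(E_n[-1] \rar Y_n)$, the cone of the $(-1)$-shift of $E_n \xrightarrow{\mathrm{ev}} C_n \xrightarrow{\delta_n} Y_n[1]$. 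The composite $E_n[-1] \rar Y_n \rar A$ is, up to a shift, $\delta_n$ followed by the next map of the triangle $(Y_n, A, C_n)$, hence zero, so $\phi_n$ factors through some $\phi_{n+1} : Y_{n+1} \rar A$. All these coproducts are small because $\cat S$ is a set, and from the triangle $Y_n \rar Y_{n+1} \rar E_n \rar Y_n[1]$ one sees by induction that $Y_n$ is an $n$-fold extension of small coproducts of non-negative shifts of objects of $\cat S$.

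Now set $Y_\infty := \mathrm{hocolim}_n Y_n$, fitting in the Milnor triangle $\coprod_n Y_n \xrightarrow{1-\mathrm{shift}} \coprod_n Y_n \rar Y_\infty \rar (\coprod_n Y_n)[1]$. A cocomplete preaisle is closed under small coproducts, positive shifts and extensions, hence under cones of its own morphisms (rotate the cone triangle), so $Y_\infty \in \preaisle S$; together with the previous paragraph this is statement (2), once I know $Y_\infty \rar A$ is an isomorphism when $A \in \preaisle S$. For that, and for (1), let $C_\infty := \mathrm{cone}(Y_\infty \rar A) \simeq \mathrm{hocolim}_n C_n$ (hocolim commutes with cones). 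For $S \in \cat S$, compactness gives $\Hom(S, Y_\infty) \simeq \mathrm{colim}_n \Hom(S, Y_n)$ and likewise with $S[-1]$; the choice of $Y_0$ makes the first surject onto $\Hom(S,A)$, and the choice of the $E_n$ makes $\mathrm{colim}_n \Hom(S[-1], Y_n) \rar \Hom(S[-1], A)$ injective, since any $\beta : S[-1] \rar Y_n$ killed in $A$ factors (weak kernel property of $(Y_n, A, C_n)$, using $S \in \cat S$) through the map $E_n[-1] \rar Y_n$ defining $Y_{n+1}$ and so dies in $Y_{n+1}$. The long exact sequences then force $\Hom(S, C_\infty) = 0$ for all $S \in \cat S$. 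Finally $\{\,X \in \cat T : \Hom(X[k], C_\infty) = 0 \text{ for all } k \ge 0\,\}$ is a cocomplete preaisle (here the positive-shift closure of $\cat S$ is used) containing $\cat S$, hence containing $\preaisle S$; applying $\Hom(U,-)$ for $U \in \preaisle S$ to the triangle $Y_\infty \rar A \rar C_\infty \rar Y_\infty[1]$ (cases $k = 0,1$) gives $\Hom(U, Y_\infty) \xrightarrow{\sim} \Hom(U, A)$. Thus this triangle is the $t$-decomposition of $A$, which proves (1) with truncation functor $A \mapsto Y_\infty$; and when $A \in \preaisle S$, uniqueness of the decomposition against $A \xrightarrow{\mathrm{id}} A \rar 0 \rar A[1]$ gives $Y_\infty \simeq A$, so the Milnor triangle above, rotated, is the triangle of (2) with $A_i = Y_i$.

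The step I expect to be the main obstacle is the verification that $C_\infty \in \cat S^{\perp}$: one has to choose the cells just right — the term $Y_0 = \coprod_{S \to A} S$ to ``cover'' all maps out of $\cat S$ and the cells $E_n = \coprod_{S \to C_n} S$ to ``kill'' the remaining ones — and then carry out the weak-kernel/octahedral bookkeeping cleanly. The remaining ingredients (the behaviour of Milnor triangles under $\Hom$ from compact objects and under cones, and the elementary closure properties of cocomplete preaisles and of right orthogonals) are routine, modulo lining up the ``$i$-fold extension'' convention with the tower.
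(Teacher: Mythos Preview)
The paper does not actually prove this proposition; it simply records the statement and cites \cite[Theorem A.7]{KP13}. Your argument is correct and is precisely the small-object/tower construction that Keller and Nicol\'as use in that reference (and which goes back to \cite{AJS03}): build $Y_0$ to make $\Hom(S,Y_0)\to\Hom(S,A)$ surjective, attach cells $E_n$ along the cones to kill the obstructions, pass to the homotopy colimit, and use compactness to identify $\Hom(S,Y_\infty)$ with the sequential colimit. Your bookkeeping for the injectivity of $\varinjlim \Hom(S[-1],Y_n)\to\Hom(S[-1],A)$ and the final orthogonality argument are exactly right, and the rotation of the Milnor triangle gives the displayed triangle in (2) with $A_i=Y_i$. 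In short: your proof is the one in the cited source, so there is nothing to compare.
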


Now we bring another auxiliary notion from \cite[1.1,  page. 316]{AJS10} which will be helpful in
characterizing compactly generated preaisles.  Let \cat U be a preaisle of \cat T.  We say \cat U 
is a \emph{total} preaisle of \cat T if \dperp U $= \cat U$.  Next, we include some results which
are probably well known but have not appeared explicitly in the literature.

\begin{lemma}
\label{aisle total}
	Aisles are total preaisles. 
\end{lemma}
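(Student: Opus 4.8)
The plan is to show both inclusions $\cat U \subseteq \dperp U$ and $\dperp U \subseteq \cat U$, where $\cat U$ is an aisle with associated t-structure $(\cat U, \cat U^{\perp}[1])$. The first inclusion is formal: for any $A \in \cat U$ and any $B \in \cat U^{\perp}$ we have $\Hom(A,B) = 0$ by definition of $\cat U^{\perp}$, which says precisely that $A \in {}^{\perp}(\cat U^{\perp}) = \dperp U$. So only the reverse inclusion $\dperp U \subseteq \cat U$ carries content.

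For the reverse inclusion, I would take $X \in \dperp U$, i.e. $X$ is left-orthogonal to everything in $\cat U^{\perp}$, and use the t-decomposition. Since $\cat U$ is an aisle, the inclusion has a right adjoint $\tau^{\leq}_{\cat U}$, and for $X$ there is a distinguished triangle
\[
\tau^{\leq}_{\cat U} X \rar X \rar C \rar (\tau^{\leq}_{\cat U} X)[1]
\]
with $\tau^{\leq}_{\cat U} X \in \cat U$ and $C \in \cat U^{\perp}$ (this is the standard t-decomposition; $C \in \cat U^{\perp}$ because the cone of the counit of an adjunction is right-orthogonal to $\cat U$). Now I claim $C = 0$. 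Apply $\Hom(X, -)$ — or rather, observe that since $X \in \dperp U$ and $C \in \cat U^{\perp}$, we have $\Hom(X, C) = 0$. Hence the map $X \rar C$ is zero, so the triangle splits and $X$ is a direct summand of $\tau^{\leq}_{\cat U} X$. Since $\cat U$ is an aisle, it is a cocomplete preaisle (by \cite[Lemma 1.3.]{AJS03}) and therefore thick (by the Eilenberg swindle, \cite[Corollary 1.4.]{AJS03}), so $X \in \cat U$. That completes the argument.

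I don't anticipate a serious obstacle here; the only point that needs a small amount of care is justifying that the third term $C$ of the t-decomposition triangle lies in $\cat U^{\perp}$ (not merely in $\cat U^{\perp}[1]$, which is the coaisle). This follows from the Keller--Vossieck correspondence recalled just above: the t-structure attached to the aisle $\cat U$ is $(\cat U, \cat U^{\perp}[1])$, so the t-decomposition of any $X$ has the form $A \rar X \rar B \rar A[1]$ with $A \in \cat U$ and $B \in \cat U^{\perp}[1][-1] = \cat U^{\perp}$; one may also see it directly by applying $\Hom(U, -)$ to the triangle for $U \in \cat U$ and using that $\tau^{\leq}_{\cat U}$ is the counit of the adjunction. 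The use of thickness (via cocompleteness) is what upgrades "direct summand of an object of $\cat U$" to "object of $\cat U$," and it is essential since a general preaisle need not be total.
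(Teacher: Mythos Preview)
Your proof is correct. The paper's own proof is simply a citation to \cite[Proposition 1.1 (i)]{AJS03}, so there is no in-paper argument to compare against; what you have written is essentially the standard argument that underlies that reference.

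One small remark: the detour through cocompleteness and thickness is avoidable. From the rotated triangle $X \to C \to (\tau^{\leq}_{\cat U} X)[1] \to X[1]$ one sees that $C$ is an extension of $X$ and $(\tau^{\leq}_{\cat U} X)[1]$, both of which lie in $\dperp U$ (the latter because $\cat U \subseteq \dperp U$ and $\dperp U$ is closed under positive shifts). Hence $C \in \dperp U \cap \cat U^{\perp}$, so $\Hom(C,C)=0$ and $C \cong 0$, giving $X \cong \tau^{\leq}_{\cat U} X \in \cat U$ directly. This version does not use that $\cat T$ has small coproducts, so it proves the lemma for aisles in arbitrary triangulated categories; your argument, while correct under the paper's standing hypothesis, needs the Eilenberg swindle and hence the coproduct assumption.
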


\begin{proof}
	\cite[Proposition 1.1 (i)]{AJS03}.
\end{proof}

Let \cat S be a class of objects of \cat T.    Consider the collection 
of objects $ \{ S[n] \mid S \in \cat S \text{ and } n \in \mathbb N \cup \{0\} \}$ we denote it by
\cat S$[\mathbb N]$.

\begin{lemma}
\label{total}
	The subcategory \dperpn S is a cocomplete total preaisle of \cat T 
\end{lemma}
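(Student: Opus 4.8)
The plan is to verify the three defining properties of a cocomplete preaisle for $\cat U \colonequals \dperpn S$, and then to check totality separately. First I would establish the easy closure properties. For closure under positive shifts (a1): if $A \in {}^{\perp}(\cat S[\mathbb N]^{\perp})$, then for any $B \in \cat S[\mathbb N]^{\perp}$ we have $\Hom(A[1], B) = \Hom(A, B[-1])$, and since $\cat S[\mathbb N]^{\perp}$ is closed under negative shifts (because $\cat S[\mathbb N]$ is closed under positive shifts, so $(\cat S[\mathbb N])^{\perp}$ absorbs shifts by $[-1]$), we get $B[-1] \in \cat S[\mathbb N]^{\perp}$, hence $\Hom(A, B[-1]) = 0$. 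For closure under extensions (a2) and under small coproducts: both follow from the general fact that for \emph{any} class $\cat V$, the subcategory ${}^{\perp}\cat V$ is closed under extensions (apply $\Hom(-, B)$ to a triangle and use the long exact sequence) and under small coproducts (since $\Hom(\coprod_i A_i, B) = \prod_i \Hom(A_i, B)$, no compactness needed on this side). Taking $\cat V = \cat S[\mathbb N]^{\perp}$ gives (a2) and cocompleteness for $\cat U$.

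Next I would address totality, i.e. $\dperp U = \cat U$ where $\cat U = \dperpn S = {}^{\perp}(\cat S[\mathbb N]^{\perp})$. The inclusion $\cat U \subseteq {}^{\perp}(\cat U^{\perp})$ holds trivially for any subcategory, so the content is the reverse inclusion ${}^{\perp}(\cat U^{\perp}) \subseteq \cat U$. The key observation is that $\cat U^{\perp} = \cat S[\mathbb N]^{\perp}$: indeed, since $\cat S[\mathbb N] \subseteq {}^{\perp}(\cat S[\mathbb N]^{\perp}) = \cat U$, we get $\cat U^{\perp} \subseteq \cat S[\mathbb N]^{\perp}$; conversely, if $B \in \cat S[\mathbb N]^{\perp}$ then by definition every object of $\cat U = {}^{\perp}(\cat S[\mathbb N]^{\perp})$ kills $B$, so $B \in \cat U^{\perp}$. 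With $\cat U^{\perp} = \cat S[\mathbb N]^{\perp}$ in hand, we immediately get ${}^{\perp}(\cat U^{\perp}) = {}^{\perp}(\cat S[\mathbb N]^{\perp}) = \cat U$, which is totality and in particular re-proves that $\cat U$ is a preaisle of the prescribed form.

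I expect the formal manipulations above to be entirely routine; the only point requiring a little care is the bookkeeping with shifts — making sure that because $\cat S[\mathbb N]$ is closed under \emph{nonnegative} shifts, its right-orthogonal $\cat S[\mathbb N]^{\perp}$ is closed under \emph{nonpositive} shifts, which is exactly what feeds the proof of (a1). No deep input (no Brown representability, no compactness of the objects in $\cat S$) is needed for this lemma: it is purely a statement about orthogonality classes. The substantive use of compactness will only enter later, when one wants $\cat U$ to actually be an aisle with a truncation functor and to coincide with $\preaisle{S}$ — but that is the content of Proposition \ref{keller}, not of this lemma.
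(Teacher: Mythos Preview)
Your proof is correct and follows essentially the same approach as the paper. The paper proves totality by establishing the abstract identity ${}^{\perp}(\cat Y^{\perp}) = {}^{\perp}\bigl(({}^{\perp}(\cat Y^{\perp}))^{\perp}\bigr)$ for an arbitrary class $\cat Y$ and then specializes to $\cat Y = \cat S[\mathbb N]$, while you reach the same conclusion by first isolating the intermediate equality $\cat U^{\perp} = \cat S[\mathbb N]^{\perp}$; the underlying orthogonality manipulations are identical, and the preaisle/cocompleteness part (via $\cat S[\mathbb N]^{\perp}[-1] \subset \cat S[\mathbb N]^{\perp}$ and the general closure properties of left orthogonals) matches the paper exactly.
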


\begin{proof}
	For any collection of objects $\cat Y \subset \cat T$, we have $\cat Y \subset ((^{\perp} \cat 
	Y)^{\perp})$,  replacing \cat Y by $(\cat Y^{\perp})$ we get $(\cat Y^{\perp}) \subset 
	((^{\perp} (\cat Y^{\perp}))^{\perp})$.  Applying left $^\perp$ we get $ ^{\perp} 
	((^{\perp} (\cat Y^{\perp}))^{\perp}) \subset  (^{\perp}(\cat Y^{\perp}))$.  
	Again for any collection $\cat Y$ we have $ \cat Y \subset$ $ ^\perp (\cat Y ^{\perp})$,
	replacing $\cat Y$ by $(^{\perp}(\cat Y^{\perp}))$ we get $ (^{\perp}(\cat Y^{\perp})) 
	\subset$ $ ^\perp ((^{\perp}(\cat Y^{\perp})) ^{\perp})$.
	This proves 
	\begin{center}
		$ (^{\perp}(\cat Y^{\perp})) =$ $ ^\perp ((^{\perp}(\cat Y^{\perp})) ^{\perp}).$
	\end{center}

	Replacing $\cat Y$ by the class of objects $\cat S[\N]$	we get
	$ (^{\perp}(\cat S[\N]^{\perp})) =$ $ ^\perp ((^{\perp}(\cat S[\N]^{\perp})) ^{\perp}).$ Also 
	note that for a class of objects \cat Y of \cat T,  if $\cat Y[-1] \subset \cat Y$ then 
	$^\perp \cat Y$ is a cocomplete preaisle.  Since $\cat S[\mathbb{N}]^{\perp} [-1] \subset 
	\cat S[\mathbb{N}]^{\perp}$ therefore \dperpn S is a  cocomplete total preaisle of \cat T. 
	
\end{proof}

\begin{lemma}
\label{total 2}
	The subcategory \dperpn S is the smallest total preaisle containing \cat S. 
\end{lemma}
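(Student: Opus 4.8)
The plan is to show that $\dperpn S$ is contained in every total preaisle that contains $\cat S$, since Lemma \ref{total} already establishes that $\dperpn S$ is itself a total preaisle containing $\cat S$ (note $\cat S \subset \cat S[\N] \subset \dperpn S$ using that $\dperpn S$ is closed under positive shifts). So let $\cat V$ be any total preaisle of $\cat T$ with $\cat S \subset \cat V$; the goal is $\dperpn S \subset \cat V$.

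First I would exploit the fact that $\cat V$ is a preaisle containing $\cat S$ and closed under positive shifts, hence $\cat S[\N] \subset \cat V$. Taking right perpendiculars reverses inclusions, so $\cat V^{\perp} \subset \cat S[\N]^{\perp}$. Taking left perpendiculars reverses again, yielding $^{\perp}(\cat S[\N]^{\perp}) \subset {}^{\perp}(\cat V^{\perp})$, that is, $\dperpn S \subset {}^{\perp}(\cat V^{\perp})$. Now I invoke that $\cat V$ is \emph{total}, which by definition means $^{\perp}(\cat V^{\perp}) = \cat V$. Combining the two gives $\dperpn S \subset \cat V$, which is exactly what we need.

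The only point requiring a little care — and the mild obstacle here — is the base case: verifying that $\dperpn S$ genuinely \emph{is} a total preaisle containing $\cat S$, so that the minimality statement is not vacuous. The preaisle and cocompleteness parts, as well as totality ($\dperp{(\dperpn S)}=\dperpn S$ in the appropriate sense), are precisely the content of Lemma \ref{total}; and $\cat S \subset \dperpn S$ follows because every $S \in \cat S$ lies in $\cat S[\N]$ (take $n=0$) and $\cat S[\N] \subset {}^{\perp}(\cat S[\N]^{\perp})$ is the general perpendicular inclusion $\cat Y \subset {}^{\perp}(\cat Y^{\perp})$. Once these are in hand the argument above closes the proof; no further input is needed.
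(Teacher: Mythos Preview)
Your proof is correct and follows essentially the same approach as the paper: both use that any total preaisle $\cat V$ containing $\cat S$ must contain $\cat S[\N]$ by closure under positive shifts, then apply the double-perpendicular and totality to obtain $\dperpn S \subset {}^{\perp}(\cat V^{\perp}) = \cat V$. The paper's version is simply a terser rendering of the same chain of inclusions, and your added remark verifying $\cat S \subset \dperpn S$ is a harmless elaboration.
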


\begin{proof}
	By Lemma \ref{total}, the subcategory \dperpn S is a total preaisle.  
	Suppose \cat U is a total preaisle containing \cat S.  As $\cat U$ is closed under positive 
	shifts $\cat S[\mathbb{N}] \subset \cat U$, hence $ \dperpn S \subset$ $ \dperp U = \cat U 
	$.   This proves the claim. 
\end{proof}

\begin{lemma}
\label{total 3}
	Let  \cat S be a set of compact objects of \cat T.  Then,
	\[ \preaisle S = { \dperpn S}.\]
\end{lemma}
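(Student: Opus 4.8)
The plan is to prove the two inclusions separately. For $\preaisle S \subset \dperpn S$, observe that by Lemma \ref{total} the right-hand side is a cocomplete preaisle containing $\cat S$ (since $\cat S \subset \cat S[\N] \subset \dperpn S$), and $\preaisle S$ is by definition the smallest cocomplete preaisle containing $\cat S$; hence the inclusion is immediate. This direction is essentially formal and requires no compactness hypothesis.

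For the reverse inclusion $\dperpn S \subset \preaisle S$, I would first invoke Proposition \ref{keller}(1), which tells us that $\preaisle S$ is an aisle of \cat T. By Lemma \ref{aisle total}, every aisle is a total preaisle, so $\preaisle S$ is a total preaisle containing \cat S. Then Lemma \ref{total 2} says that $\dperpn S$ is the \emph{smallest} total preaisle containing \cat S, which forces $\dperpn S \subset \preaisle S$. Combining the two inclusions yields the claimed equality.

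The main subtlety — and the reason compactness of \cat S is needed — is hidden in the appeal to Proposition \ref{keller}(1): without the assumption that \cat S consists of compact objects, the cocomplete preaisle $\preaisle S$ generated by \cat S need not be an aisle (the right adjoint $\tau^{\leq}_{\cat U}$ may fail to exist), and then Lemma \ref{aisle total} would not apply. So the real content of the statement is packaged into the Keller--Nicol\'as result, and the argument here is just the bookkeeping that links "aisle" to "total preaisle" via Lemmas \ref{aisle total} and \ref{total 2}. I would make sure to state explicitly where compactness enters, since that is the one place the hypothesis is used; everything else is a soft consequence of the closure properties of $^{\perp}$ and the generation definitions already established.
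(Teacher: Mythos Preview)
Your proposal is correct and follows essentially the same approach as the paper: both directions are obtained by combining Lemma~\ref{total} (for $\preaisle S \subset \dperpn S$) with Proposition~\ref{keller}(1), Lemma~\ref{aisle total}, and Lemma~\ref{total 2} (for the reverse inclusion). Your additional remark pinpointing Proposition~\ref{keller}(1) as the unique place where compactness is used is a helpful clarification not made explicit in the paper.
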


\begin{proof}
	Recall that $\preaisle S$ is the smallest cocomplete preaisle containing $\cat S$.  Now
	$\dperpn S$ is a cocomplete preaisle by Lemma \ref{total},  hence \preaisle S $\subset$ 
	\dperpn S.   We know \preaisle S is an aisle by Proposition \ref{keller}.  Since aisles
	are total preaisles by Lemma \ref{aisle total}, \preaisle S is a total preaisle containing \cat S.   
	Lemma \ref{total 2} says \dperpn S is the smallest total preaisle containing \cat S 
	therefore we get  the reverse inclusion \dperpn S $\subset$ \preaisle S.
\end{proof}

\begin{proposition}
\label{compact characterization}
	Let  \cat U be a cocomplete preaisle of \cat T. 
	The following are equivalent:
	\begin{itemize}

		\item[(1)] The cocomplete preaisle \cat U is compactly generated.
		\item[(2)] There is a set of compact objects \cat S such that \cat U = \preaisle S.
		\item[(3)] There is a set of compact objects \cat S such that \cat U = \dperpn S.
		\item[(4)] There is a set of compact objects \cat S $\subset$ \cat U such that for any 
		$A \in \cat U$
		if \[\mathrm{Hom } (S ,  A) = 0\] for every $S \in \cat S[\mathbb N]$ then $A \cong 0$.
		\item[(5)] The pair $(\cat U,\cat U^{\bot}[1])$ is a compactly generated t-structure.
		
	\end{itemize}
\end{proposition}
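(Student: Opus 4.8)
The plan is to prove Proposition \ref{compact characterization} by establishing a cycle of implications, using the lemmas just proved (especially Lemma \ref{total 3}) to pass freely between the ``generators'' description \preaisle S and the ``orthogonality'' description \dperpn S. The implications $(1) \Leftrightarrow (2)$ are immediate from Definition \ref{definition}: a cocomplete preaisle is compactly generated precisely when it equals \preaisle S for a set of compact objects \cat S, and since \cat U is assumed cocomplete, \cat U $= $ \preaisle S already forces \cat U to be the cocomplete preaisle generated by \cat S. The equivalence $(2) \Leftrightarrow (3)$ is exactly Lemma \ref{total 3}, which identifies \preaisle S with \dperpn S for any set of compact objects; one only needs to note that the \cat S witnessing (2) and (3) may be taken to be the same set.

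Next I would handle $(3) \Rightarrow (4)$ and $(4) \Rightarrow (3)$, which is where the bookkeeping about $\cat S[\mathbb N]$ matters. Assuming (3), write \cat U $=$ \dperpn S. First note \cat S $\subset$ \cat U: since \preaisle S $=$ \dperpn S contains \cat S by Lemma \ref{total 3} (or directly, each $S \in \cat S[\mathbb N]$ is orthogonal to everything in $\cat S[\mathbb N]^{\perp}$ on the right). Now if $A \in \cat U = \dperpn S$ and $\Hom(S, A) = 0$ for all $S \in \cat S[\mathbb N]$, then $A \in \cat S[\mathbb N]^{\perp}$; but $A$ also lies in $^{\perp}(\cat S[\mathbb N]^{\perp})$, so $\Hom(A, A) = 0$, hence $A \cong 0$. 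Conversely, assuming (4) with witnessing set \cat S $\subset$ \cat U, I would show \cat U $=$ \dperpn S. The inclusion \dperpn S $\subset$ \cat U follows because \cat U is a cocomplete preaisle containing \cat S, hence a total preaisle (a cocomplete preaisle is total — this needs the observation that \cat U, being cocomplete, is an aisle? no). Here is the one genuinely delicate point: I need that a cocomplete preaisle containing \cat S contains the smallest \emph{total} preaisle \dperpn S, and for that I want \cat U itself to be total. This is \emph{not} automatic for arbitrary cocomplete preaisles; but \cat U is compactly generated in a weak sense by (4). The cleaner route is: \cat U is cocomplete and, because \cat S consists of compact objects, \preaisle S is an aisle (Proposition \ref{keller}), and \preaisle S $\subset$ \cat U. For the reverse \cat U $\subset$ \preaisle S, take $A \in \cat U$ and form its truncation triangle $\tau^{\leq} A \to A \to C$ with respect to the aisle \preaisle S $=$ \dperpn S; then $C \in ($\preaisle S$)^{\perp} = \cat S[\mathbb N]^{\perp}$ by the description of the coaisle, and $C$ lies in the preaisle \cat U as a cone of a map between objects of \cat U (here $\tau^{\leq} A \in$ \preaisle S $\subset$ \cat U). So $C \in \cat U$ with $\Hom(S, C) = 0$ for all $S \in \cat S[\mathbb N]$, whence $C \cong 0$ by hypothesis (4), giving $A \cong \tau^{\leq} A \in$ \preaisle S. Thus (4) $\Rightarrow$ (2), closing the loop with (3).

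Finally, for $(2) \Leftrightarrow (5)$: by Proposition \ref{keller}(1), \preaisle S is an aisle, and by the Keller--Vossieck correspondence recalled in Section \ref{section 1} an aisle \cat U determines the t-structure $(\cat U, \cat U^{\perp}[1])$; this t-structure is compactly generated exactly when its aisle \cat U $=$ \preaisle S is compactly generated, which is Definition \ref{definition}. Conversely (5) says $(\cat U, \cat U^{\perp}[1])$ is a compactly generated t-structure, so by definition its aisle \cat U is a compactly generated preaisle, i.e. \cat U $=$ \preaisle S for a set \cat S of compact objects, which is (2). Assembling: $(1) \Leftrightarrow (2) \Leftrightarrow (3)$, $(3) \Rightarrow (4) \Rightarrow (2)$, and $(2) \Leftrightarrow (5)$, so all five are equivalent.

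The main obstacle I anticipate is the verification inside $(4) \Rightarrow (2)$ that the cone $C$ of the truncation map stays inside the preaisle \cat U: one must check that \preaisle S $\subset$ \cat U (so that $\tau^{\leq} A \in \cat U$) and then use closure of the preaisle \cat U under the relevant extension/cone — being a preaisle, \cat U is closed under extensions, and $C$ sits in a triangle $\tau^{\leq}A \to A \to C \to \tau^{\leq}A[1]$, i.e. $C$ is an extension of $\tau^{\leq}A[1]$ by $A$, both in \cat U (using (a1) for $\tau^{\leq}A[1]$). That is the only place the precise axioms (a1), (a2) of a preaisle are used in an essential, non-formal way; everything else is a direct consequence of Lemmas \ref{aisle total}--\ref{total 3} and Proposition \ref{keller}.
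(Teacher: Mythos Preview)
Your proof is correct and follows essentially the same route as the paper's: the equivalences $(1)\Leftrightarrow(2)\Leftrightarrow(3)$ and $(2)\Leftrightarrow(5)$ are handled identically, and your $(4)\Rightarrow(2)$ via the truncation triangle for the aisle \preaisle S is exactly the paper's argument. The only cosmetic difference is in $(3)\Rightarrow(4)$, where you argue directly that $A\in{}^{\perp}(\cat S[\mathbb N]^{\perp})\cap\cat S[\mathbb N]^{\perp}$ forces $\Hom(A,A)=0$, while the paper cites \cite[Lemma 3.1]{AJS03} to identify $\cat S[\mathbb N]^{\perp}$ with $\cat U^{\perp}$ first; your version is slightly more self-contained but logically equivalent.
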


\begin{proof}
	(1) $\Leftrightarrow$ (2) follows from Definition \ref{definition}.
	Lemma \ref{total 3} shows (2) $\Leftrightarrow$ (3).   Suppose $A \in \cat U$ and $\Hom (S, 
	A) = 0$ for all $S \in \cat S[\N]$, this means $A \in \cat S [\N] ^{\perp}$.  By Lemma 
	\cite[Lemma 3.1]{AJS03}  we have 
	$\cat S [\N] ^{\perp} = \cat U ^{\perp}$ hence $A \in \cat U \cap \cat U ^{\perp}$.  So
	we get $A \cong 0$; this proves (3) $\Rightarrow$ (4). 
	
	Next, we prove (4) $\Rightarrow$ (2).  Note that \preaisle S $\subset$ \cat U and 
	\preaisle S is an aisle by Proposition \ref{keller}.  Let $A \in \cat U$.
	Consider the t-decomposition triangle of $A$ for \preaisle S,
	\[\tau_{\cat S} ^{\leq} A \rar A \rar \tau_{\cat S}^> A \rar \tau_{\cat S} ^{\leq} A[1] .\]
	As \cat U is closed under extension $\tau_{\cat S}^> A$ is in \cat U.  Since $\tau_{\cat 
	S}^> A$ is in $\cat S[\mathbb N]^{\perp}$ by (4) $\tau_{\cat S}^> A \cong 0$.  Therefore
	the map $\tau_{\cat S} ^{\leq} A \rar A$ is an isomorphism and hence $A \in \preaisle S$.  
	This
	proves \cat U = \preaisle S.
	
	A compactly generated preaisle \cat U  is an aisle by Proposition \ref{keller}.  By
	the bijective correspondence between aisles and t-structures  and by Definition
	\ref{definition}, we get (5) $\Leftrightarrow$ (1). 
\end{proof}

\subsection{Smashing t-structures}

A thick subcategory $\cat C$ is \emph{Bousfield} if the inclusion $\cat C \to \cat T$ admits a 
right adjoint; see \cite[4.9]{Kra10}.   Therefore Bousfield subcategories are precisely the stable 
aisles.  A Bousfield subcategory \cat C is \emph{smashing} if the subcategory 
$\cat C ^{\perp} $ is localizing; for other characterizations see \cite[Proposition 5.5.1]{Kra10}.
In view of this we say an aisle \cat U is \emph{smashing} if $\cat U ^{\perp}$ is closed 
under coproducts; also see \cite[Definition 7.1]{SSV21}.  A t-structure $(\cat U,  \cat V[1])$ is 
called \emph{smashing} if the coaisle $\cat V$ is closed under taking coproducts.

\begin{proposition}
	Compactly generated t-structures are smashing t-structures.  
\end{proposition}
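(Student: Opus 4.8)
The plan is to reduce the statement to compactness of the generators together with the elementary fact that compact objects turn coproducts into coproducts. Let $(\cat U, \cat U^{\perp}[1])$ be a compactly generated t-structure. By Definition \ref{definition} and Proposition \ref{compact characterization} we may write $\cat U = \preaisle S$ for a set $\cat S$ of compact objects of $\cat T$; its coaisle is $\cat U^{\perp}$, so it suffices to show that $\cat U^{\perp}$ is closed under small coproducts.

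First I would identify the coaisle explicitly. By Lemma \ref{total 3} we have $\cat U = \dperpn S$. Since $\cat S[\N] \subset \cat U$ (as $\cat U$ is closed under positive shifts), we get $\cat U^{\perp} \subset \cat S[\N]^{\perp}$; conversely, any $B \in \cat S[\N]^{\perp}$ satisfies $\Hom(A,B) = 0$ for every $A \in {}^{\perp}(\cat S[\N]^{\perp}) = \cat U$, so $B \in \cat U^{\perp}$. Hence $\cat U^{\perp} = \cat S[\N]^{\perp}$ (this is also \cite[Lemma 3.1]{AJS03}); in other words, $B \in \cat U^{\perp}$ if and only if $\Hom(S, B[-n]) = 0$ for all $S \in \cat S$ and all $n \geq 0$.

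Next I would invoke compactness. Let $\{B_j\}_j$ be a family of objects of $\cat U^{\perp}$. For each $S \in \cat S$ and each $n \geq 0$, the object $S[n]$ is compact because $\cat T^c$ is a thick subcategory and hence closed under shifts; therefore the natural map $\coprod_j \Hom(S[n], B_j) \rar \Hom(S[n], \coprod_j B_j)$ is an isomorphism. The source vanishes since each $B_j$ lies in $\cat S[\N]^{\perp}$, so $\Hom(S[n], \coprod_j B_j) = 0$ for all such $S$ and $n$, i.e. $\coprod_j B_j \in \cat S[\N]^{\perp} = \cat U^{\perp}$. Thus the coaisle is closed under coproducts, which is exactly the condition for the t-structure to be smashing.

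I do not expect a genuine obstacle here: the only point requiring a word of justification is the identification $\cat U^{\perp} = \cat S[\N]^{\perp}$, which is immediate from Lemma \ref{total 3}, and the remainder is the standard "compact objects detect coproducts" argument. One could alternatively describe the objects of $\cat U$ directly via Proposition \ref{keller}(2), but the $\perp$-description of the coaisle keeps the argument shortest.
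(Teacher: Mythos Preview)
Your proof is correct and follows essentially the same approach as the paper: identify the coaisle as $\cat S[\N]^{\perp}$ (the paper uses this implicitly, you justify it via Lemma~\ref{total 3}), then use compactness of the generators to conclude that $\Hom(S[n],\coprod_j B_j)\cong\coprod_j\Hom(S[n],B_j)=0$. The paper's version is terser but the argument is the same.
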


\begin{proof}
	Suppose $(\cat U, \cat V[1])$ is a compactly generated t-structure and $\cat U =$ 
	$\dperpn S$.
	Let $\{A _i\}$ be a collection of objects in $\cat V$.  To show $\coprod_i A_i \in \cat 
	V$ it is enough to show $\Hom (S , \coprod_i A_i) = 0$ for all $S \in \cat S[\N]$.  Since
	$S \in \cat S[\N]$ is compact we have $\Hom (S, \coprod_i A_i) = \coprod_i  \Hom (S , A_i).$
	As $A_i \in \cat V$ we have $\Hom (S, A_i) = 0$ and this proves the claim. 
\end{proof}

\section{Tensor t-structures}
\label{section 2}

We recall the definition of tensor triangulated category from \cite[Definition A.2.1]{HPS97}.

\begin{definition}
	A \emph{tensor triangulated category} $(\cat T,  \otimes , \mathbf{1})$ is a triangulated 
	category with a compatible closed symmetric monoidal structure.  This means there is a 
	functor $- \otimes - \colon \cat T \times \cat T \to \cat T$ which is triangulated in both the 
	variables and satisfies certain compatibility conditions.  Moreover,  for each $B \in \cat T$ the 
	functor $- \otimes B$ has a right adjoint which we denote by $\mathscr{H}om (B, -) $.   The 
	functor $\mathscr{H}om (-,-)$ is triangulated in both the variables, and for any $A$,  $B$,  
	and $C$ in $\cat T$ we have natural isomorphisms $ \Hom ( A \otimes B ,  C) \rar 
	\Hom (A,  \mathscr{H}om (B, C))$. 
\end{definition} 

Suppose $\cat T$ is given with a preaisle $\cat T ^{\leq 0}$ such that $\aisle T \otimes \aisle T
\subset \aisle T$ and $\mathbf{1} \in \aisle T$.  We introduce the following definition which is motivated from \cite[\S 5]{AJS03}; a similar situation is also considered in \cite[\S 3]{Nee18}.

\begin{definition}
\label{tensor definition}
A preaisle $\cat U$  of \cat T is a \emph{$\otimes$-preaisle} (with respect to \aisle T) if $\cat T ^{\leq 0} \otimes 
\cat U \subset \cat U$.  An aisle is called a \tensor aisle if it is a \tensor preaisle.  A t-structure is 
a \emph{tensor t-structure} if the aisle of the t-structure is a $\otimes$-aisle.
\end{definition}

\begin{remark}
	Though we define tensor t-structures in this generality,  for the derived categories (example 
	\derive X) we always take tensor t-structures with respect to the aisle of the 
	standard t-structure.
\end{remark}

\begin{proposition}
\label{tensor proposition}
	Let $(\cat U,  \cat V[1])$ be a t-structure on \cat T.  Then the following are equivalent:
	\begin{itemize}
		\item[(1)]  $(\cat U,  \cat V[1])$ is a tensor t-structure. 
		\item[(2)]  \cat U is a \tensor aisle.
		\item[(3)] $\mathscr{H}om ( \aisle T,  \cat V) \subset \cat V$.
	\end{itemize}
\end{proposition}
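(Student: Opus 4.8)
The plan is to prove the equivalences by going $(1) \Leftrightarrow (2)$ essentially by definition, and then $(2) \Leftrightarrow (3)$ via the adjunction between $-\otimes B$ and $\mathscr{H}om(B,-)$ together with the orthogonality $\cat V = \cat U^{\perp}$ built into the correspondence between aisles and t-structures.

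The equivalence $(1) \Leftrightarrow (2)$ is immediate from Definition \ref{tensor definition}: a t-structure is by fiat a tensor t-structure precisely when its aisle is a \tensor aisle, and since every aisle is a preaisle, being a \tensor aisle is the same as being a \tensor preaisle, i.e.\ $\aisle T \otimes \cat U \subset \cat U$. So the content is $(2) \Leftrightarrow (3)$.

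For $(2) \Rightarrow (3)$, I would take $A \in \aisle T$ and $V \in \cat V$ and show $\mathscr{H}om(A, V) \in \cat V$. Recalling from the Keller--Vossieck correspondence that $\cat V = \cat U^{\perp}$ (here $\cat V[1] = \cat U^{\perp}[1]$, so $\cat V = \cat U^{\perp}$), it suffices to check $\Hom(U, \mathscr{H}om(A,V)) = 0$ for every $U \in \cat U$. By the defining adjunction isomorphism $\Hom(U \otimes A, V) \cong \Hom(U, \mathscr{H}om(A,V))$, this amounts to $\Hom(U \otimes A, V) = 0$. But $U \otimes A = A \otimes U \in \aisle T \otimes \cat U \subset \cat U$ by hypothesis (using that $\cat U$ is a \tensor aisle and symmetry of $\otimes$), and $V \in \cat V = \cat U^{\perp}$, so indeed $\Hom(U \otimes A, V) = 0$. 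Hence $\mathscr{H}om(A,V) \in \cat U^{\perp} = \cat V$.

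For the converse $(3) \Rightarrow (2)$, I would reverse this argument: given $\mathscr{H}om(\aisle T, \cat V) \subset \cat V$, take $A \in \aisle T$ and $U \in \cat U$; I want $A \otimes U \in \cat U$. Since $\cat U$ is total (it is an aisle, hence by Lemma \ref{aisle total} a total preaisle, so $\cat U = {}^{\perp}(\cat U^{\perp}) = {}^{\perp}\cat V$), it is enough to show $\Hom(A \otimes U, V) = 0$ for all $V \in \cat V$. Again by the adjunction this equals $\Hom(U, \mathscr{H}om(A, V))$, and $\mathscr{H}om(A,V) \in \cat V = \cat U^{\perp}$ by hypothesis, so $\Hom(U, \mathscr{H}om(A,V)) = 0$. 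Therefore $A \otimes U \in {}^{\perp}\cat V = \cat U$, and $\cat U$ is a \tensor aisle. The only mild subtlety — and the one point I would state carefully — is keeping the shift bookkeeping straight (that the coaisle subcategory $\cat V$ is exactly $\cat U^{\perp}$, not $\cat U^{\perp}[1]$) and invoking the totality of $\cat U$ from Lemma \ref{aisle total} in the backward direction; neither is hard, but they are where a sloppy argument could slip.
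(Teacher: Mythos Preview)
Your proof is correct and follows essentially the same route as the paper's: $(1)\Leftrightarrow(2)$ by definition, and $(2)\Leftrightarrow(3)$ via the adjunction isomorphism $\Hom(A\otimes X, B)\cong\Hom(A,\mathscr{H}om(X,B))$ combined with $\cat V=\cat U^{\perp}$. You are more explicit than the paper in invoking totality of aisles (Lemma \ref{aisle total}) for the direction $(3)\Rightarrow(2)$, which the paper leaves implicit; this is a point worth stating, as you do.
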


\begin{proof}
	(1)$\Leftrightarrow$(2) is by definition.   Let $A \in \cat U$, $B \in \cat V$, and $X \in 
	\aisle T$.  Then from the adjunction isomorphism we have
	\[ \Hom ( A \otimes X ,  B) \cong \Hom (A ,  \mathscr{H} om(X, B)).\]
	This proves (2)$\Leftrightarrow$(3).
	
\end{proof}

\begin{lemma}
\label{tensor generator 1}
	 Let \aisle {\cat T} be generated by a set of objects \cat K,  that is, \aisle {\cat T} =
	 \preaisle{\cat K}.  Then a preaisle \cat U of \cat T is a $\otimes$-preaisle if and only if 
	 \cat K $\otimes$ \cat U $\subset \cat U$.
\end{lemma}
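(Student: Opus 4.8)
The plan is to prove the two implications separately, with the forward direction being immediate and the reverse direction being the substantive one. If $\cat U$ is a $\otimes$-preaisle, then by definition $\aisle{\cat T} \otimes \cat U \subset \cat U$; since $\cat K \subset \aisle{\cat T} = \preaisle{\cat K}$, we get $\cat K \otimes \cat U \subset \aisle{\cat T} \otimes \cat U \subset \cat U$, which is one direction. So the content is the converse: assuming $\cat K \otimes \cat U \subset \cat U$, show $\aisle{\cat T} \otimes \cat U \subset \cat U$, i.e. that $X \otimes A \in \cat U$ for every $X \in \aisle{\cat T}$ and every $A \in \cat U$.

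First I would fix $A \in \cat U$ and consider the full subcategory $\cat W = \{\, X \in \cat T \mid X \otimes A \in \cat U \,\}$. The goal is to show $\aisle{\cat T} \subset \cat W$, and since $\aisle{\cat T} = \preaisle{\cat K}$ is the smallest cocomplete preaisle containing $\cat K$, it suffices to verify that $\cat W$ is a cocomplete preaisle containing $\cat K$. Containment of $\cat K$ is exactly the hypothesis $\cat K \otimes \cat U \subset \cat U$. Closure under positive shifts: if $X \in \cat W$ then $X[1] \otimes A \cong (X \otimes A)[1] \in \cat U$ because $\cat U$ is a preaisle (closed under positive shifts) and $-\otimes A$ is triangulated, hence commutes with shift. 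Closure under extensions: given a triangle $X' \to X \to X'' \to X'[1]$ with $X', X'' \in \cat W$, applying the triangulated functor $-\otimes A$ yields a triangle $X' \otimes A \to X \otimes A \to X'' \otimes A \to (X' \otimes A)[1]$ with the two outer terms in $\cat U$; since $\cat U$ is closed under extensions, $X \otimes A \in \cat U$, so $X \in \cat W$. Closure under small coproducts: if $\{X_i\} \subset \cat W$, then since $-\otimes A$ preserves coproducts (being a left adjoint, as $-\otimes A$ has right adjoint $\mathscr{H}om(A,-)$), we have $(\coprod_i X_i) \otimes A \cong \coprod_i (X_i \otimes A) \in \cat U$ because $\cat U$ is a cocomplete preaisle.

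The only delicate point, and what I expect to be the main (though mild) obstacle, is making sure each of these closure properties is genuinely available: specifically that $-\otimes A$ is triangulated in the first variable (so that shifts and triangles are preserved) and that it commutes with coproducts. Both are part of the definition of a tensor triangulated category recalled just above — triangulated in each variable, and $-\otimes B$ admits a right adjoint $\mathscr{H}om(B,-)$ for each $B$, hence preserves coproducts — and $\cat U$ being a cocomplete preaisle (which holds since it is a preaisle closed under small coproducts, by hypothesis) supplies exactly the three corresponding closure properties. Having shown $\cat W$ is a cocomplete preaisle containing $\cat K$, minimality of $\preaisle{\cat K} = \aisle{\cat T}$ gives $\aisle{\cat T} \subset \cat W$, i.e. $\aisle{\cat T} \otimes A \subset \cat U$; since $A \in \cat U$ was arbitrary, $\cat T^{\leq 0} \otimes \cat U \subset \cat U$, so $\cat U$ is a $\otimes$-preaisle. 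This completes the proof.
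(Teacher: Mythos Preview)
Your proof is correct and takes essentially the same approach as the paper: define the subcategory of those $X$ for which tensoring lands in $\cat U$, verify it is a cocomplete preaisle containing $\cat K$, and invoke minimality of $\preaisle{\cat K} = \aisle{\cat T}$. The only cosmetic difference is that you fix $A \in \cat U$ first and vary it at the end, whereas the paper quantifies over all of $\cat U$ at once in its subcategory $\cat B = \{X \in \aisle{\cat T} \mid X \otimes \cat U \subset \cat U\}$; note also that both arguments tacitly use cocompleteness of $\cat U$, which the lemma statement does not explicitly include.
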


\begin{proof}
	Suppose \cat K $\otimes$ \cat U $\subset \cat U$.  We define  $\cat B$ = $\{ X \in \aisle T 
	\mid X \otimes \cat U \subset \cat U\}$.  If $X \in 
	  \cat B$ then $X[1] \in \cat B$, as \cat U is closed under positive shifts.  Suppose we
	  have a triangle $X \rar Y \rar Z \rar X[1]$ with $X$ and $Z$ in \cat B.  Since \cat U is closed 
	  under extension,  we get $Y \in \cat B$.
	  This proves \cat B is a preaisle.  Again cocompleteness of $\cat U$ implies
	  $\cat B$ is cocomplete.  Since $\cat K \subset \cat B$ we get 
	  $\aisle T \subset \cat B$ which proves $\cat U$ is \tensor preaisle.  The converse follows 
	  easily.  
\end{proof}

\begin{proposition}
\label{affine aisle}
	If $\aisle T = \langle \mathbf{1} \rangle ^{\leq 0}$ then every cocomplete preaisle of \cat T is 
	a \tensor preaisle.  In particular, all t-structures are tensor t-structures.  
\end{proposition}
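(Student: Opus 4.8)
The plan is to deduce this directly from Lemma~\ref{tensor generator 1}. Under the hypothesis $\aisle T = \langle \mathbf{1} \rangle^{\leq 0}$ the aisle $\aisle T$ is, by definition, generated as a cocomplete preaisle by the one-element set $\cat K = \{\mathbf{1}\}$, so Lemma~\ref{tensor generator 1} will say that a cocomplete preaisle $\cat U$ of $\cat T$ is a \tensor preaisle if and only if $\mathbf{1} \otimes \cat U \subset \cat U$. Since $\mathbf{1}$ is the unit of the monoidal structure, there is a natural isomorphism $\mathbf{1} \otimes U \cong U$ for every $U \in \cat U$, and as full subcategories are closed under isomorphism this inclusion is automatic. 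Hence every cocomplete preaisle of $\cat T$ is a \tensor preaisle.

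If one wishes to sidestep the standing hypotheses in force before Definition~\ref{tensor definition}, the same conclusion can be obtained by repeating the argument of Lemma~\ref{tensor generator 1} in this special case: fix $U \in \cat U$ and set $\cat B_U = \{X \in \cat T \mid X \otimes U \in \cat U\}$. The functor $-\otimes U$ is triangulated, and since it admits the right adjoint $\mathscr{H}om(U,-)$ it preserves small coproducts; together with the closure of $\cat U$ under positive shifts, extensions, and coproducts this makes $\cat B_U$ a cocomplete preaisle. Because $\mathbf{1} \otimes U \cong U \in \cat U$ we have $\mathbf{1} \in \cat B_U$, hence $\aisle T = \langle \mathbf{1} \rangle^{\leq 0} \subset \cat B_U$, i.e. $\aisle T \otimes U \subset \cat U$; letting $U$ range over $\cat U$ gives $\aisle T \otimes \cat U \subset \cat U$. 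Applying this with $\cat U = \aisle T$ also records that $\langle \mathbf{1} \rangle^{\leq 0} \otimes \langle \mathbf{1} \rangle^{\leq 0} \subset \langle \mathbf{1} \rangle^{\leq 0}$, so the choice $\aisle T = \langle \mathbf{1} \rangle^{\leq 0}$ is a legitimate instance of the ambient setup.

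For the last assertion I would recall from \cite[Lemma 1.3]{AJS03} that every aisle is a cocomplete preaisle; by the above it is then a \tensor aisle, so under the bijective correspondence between aisles and t-structures recalled in Section~\ref{section 1} every t-structure on $\cat T$ is a tensor t-structure. I do not expect any genuine obstacle here; the only points to watch are that $-\otimes U$ preserves coproducts (which is exactly where the closed monoidal structure is used) and that $\{\mathbf{1}\}$ really does generate $\aisle T$ under the present hypothesis, so that Lemma~\ref{tensor generator 1} — whose proof relies on the cocompleteness of $\cat U$, precisely as assumed here — applies.
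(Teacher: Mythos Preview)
Your proposal is correct and takes essentially the same approach as the paper, which simply writes ``Follows from Lemma~\ref{tensor generator 1}.'' You have supplied the details behind that one line---applying the lemma with $\cat K=\{\mathbf{1}\}$ and using that $\mathbf{1}\otimes U\cong U$---and your additional remarks (the self-contained variant and the verification of the standing hypotheses $\aisle T\otimes\aisle T\subset\aisle T$, $\mathbf{1}\in\aisle T$) are accurate but go beyond what the paper records.
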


\begin{proof}
	Follows from Lemma \ref{tensor generator 1}.
\end{proof}

We recall the definitions of \emph{\tensor ideal} and \emph{coideal}; see for instance
\cite[Definition 1.4.3]{HPS97}.

\begin{definition}
	A thick subcategory $\cat C$ of \cat T is a \tensor ideal of $\cat T$ if $\cat T \otimes
	\cat C \subset \cat C$.  We say $\cat C$ is a \emph{coideal} of $\cat T$ 
	if $\mathscr{H}om(\cat T,  \cat C) \subset \cat C$.
	
\end{definition}

Recall that a preaisle is called stable if it is closed under negative shifts.  We say a t-structure is 
\emph{stable} if the aisle is closed under negative shifts.  Note that for a stable t-structure the 
coaisle is closed under positive shifts.  

\begin{lemma}
\label{tensor lemma 1}
	Suppose $ \langle \aisle T \rangle = \cat T$. If a cocomplete \tensor preaisle is stable, then it 
	is a  \tensor ideal of $\cat T$.  
\end{lemma}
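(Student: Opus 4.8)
The plan is to start from a cocomplete \tensor preaisle $\cat U$ that is stable, and show it is a triangulated subcategory closed under the tensor action of all of $\cat T$. First I would observe that a stable cocomplete preaisle is automatically a triangulated subcategory: it is closed under positive and negative shifts, and a preaisle closed under extensions together with all shifts is closed under cones (given a triangle $A \to B \to C \to A[1]$, if $A$ and $B$ lie in $\cat U$ then $C$ sits in an extension of $B$ and $A[1]$, both in $\cat U$). Cocompleteness then upgrades this to a localizing subcategory, and in particular $\cat U$ is thick by the Eilenberg swindle (as already noted after the definition of cocomplete preaisle, citing \cite[Corollary 1.4.]{AJS03}).

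Next I would handle the tensor ideal condition, i.e.\ that $\cat T \otimes \cat U \subset \cat U$. By hypothesis $\cat U$ is a \tensor preaisle, so $\aisle T \otimes \cat U \subset \cat U$; the point is to bootstrap this from $\aisle T$ to all of $\cat T$. The natural device is to fix $U \in \cat U$ and consider $\cat B_U = \{ X \in \cat T \mid X \otimes U \in \cat U \}$. Since $- \otimes U$ is a triangulated functor commuting with coproducts, and $\cat U$ is now known to be a localizing (hence triangulated, coproduct-closed, thick) subcategory, $\cat B_U$ is a localizing subcategory of $\cat T$. It contains $\aisle T$ because $\cat U$ is a \tensor preaisle. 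Therefore $\cat B_U \supseteq \langle \aisle T \rangle = \cat T$ by the hypothesis $\langle \aisle T \rangle = \cat T$. As $U \in \cat U$ was arbitrary, this gives $\cat T \otimes \cat U \subset \cat U$, which is exactly the statement that $\cat U$ is a \tensor ideal.

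I do not expect any serious obstacle here; the argument is a routine ``generation'' bootstrap of the kind already used in the proof of Lemma \ref{tensor generator 1}. The only points deserving care are (i) verifying that stability plus the preaisle axioms really does force closure under cones and shifts in both directions — so that ``triangulated subcategory'' is justified before invoking the swindle — and (ii) making sure $\cat B_U$ is genuinely localizing, which needs $- \otimes U$ to preserve coproducts (part of the tensor triangulated structure) and needs $\cat U$ to be closed under coproducts and direct summands (cocompleteness and the swindle). Once these are in place, the hypothesis $\langle \aisle T \rangle = \cat T$ does all the remaining work in one line.
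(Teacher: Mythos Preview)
Your proposal is correct and follows essentially the same route as the paper: both arguments pass to the subcategory of objects $X$ with $X \otimes \cat U \subset \cat U$ (you do this one $U$ at a time via $\cat B_U$, the paper does it for all of $\cat U$ at once), observe it is localizing, and invoke $\langle \aisle T \rangle = \cat T$. Your additional care in first verifying that a stable cocomplete preaisle is a thick triangulated subcategory is a point the paper leaves implicit but is indeed needed for the definition of \tensor ideal.
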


\begin{proof}
	Let $\cat U$ be a stable cocomplete \tensor preaisle of $\cat T$.  We define  $\cat B$ = $\{ X 
	\in \cat T 
	\mid X \otimes \cat U \subset \cat U\}$.  Using similar arguments as in Lemma \ref{tensor 
	generator 1}, we can observe that $\cat B$ is a localizing subcategory of $\cat T$.   Since 
	$\cat B$ 
	contains 
	$\aisle T$ we get $\langle \aisle T \rangle = \cat T \subset \cat B$.  This proves $\cat U$ is a 
	\tensor ideal.
\end{proof}

\begin{proposition}
\label{tensor ideal}
	Let $\cat T$ be a tensor triangulated category with a preaisle $\aisle T$ such that
	$\langle \aisle T \rangle = \cat T$.  If $(\cat U,  \cat V [1])$ is a stable t-structure on 
	\cat T, then the following are equivalent:
	
	\begin{itemize}
		\item[(1)]  $(\cat U,  \cat V [1])$ is a stable tensor t-structure on \cat T.
		\item[(2)] The aisle \cat U is a \tensor ideal of \cat T.
		\item[(3)] The coaisle \cat V is a coideal of \cat T.
	\end{itemize}
\end{proposition}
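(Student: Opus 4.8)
The plan is to split the cycle into the two equivalences $(1)\Leftrightarrow(2)$ and $(2)\Leftrightarrow(3)$. The tools are the internal-hom adjunction $\Hom(A\otimes X,B)\cong\Hom(A,\mathscr{H}om(X,B))$, the fact that aisles are total preaisles (Lemma \ref{aisle total}), so that for the stable t-structure $(\cat U,\cat V[1])$ one has $\cat V=\cat U^{\perp}$ and $\cat U={}^{\perp}\cat V={}^{\perp}(\cat U^{\perp})$, and Lemma \ref{tensor lemma 1}, which is precisely the step that upgrades a stable \tensor aisle to a \tensor ideal under the hypothesis $\langle\aisle T\rangle=\cat T$. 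Observe at the outset that since the t-structure is stable, both $\cat U$ and $\cat V$ are thick triangulated subcategories of $\cat T$, so the notions of \tensor ideal and coideal in $(2)$ and $(3)$ genuinely apply to them.

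For $(1)\Leftrightarrow(2)$: by Definition \ref{tensor definition}, statement $(1)$ says exactly that $\cat U$ is a stable \tensor aisle. Since aisles are cocomplete preaisles, $\cat U$ is then a stable cocomplete \tensor preaisle, and Lemma \ref{tensor lemma 1} (using $\langle\aisle T\rangle=\cat T$) gives that $\cat U$ is a \tensor ideal; this is $(1)\Rightarrow(2)$. Conversely, if $\cat U$ is a \tensor ideal then $\aisle T\otimes\cat U\subset\cat T\otimes\cat U\subset\cat U$, so $\cat U$ is a \tensor aisle, and as $(\cat U,\cat V[1])$ is already a stable t-structure it is a stable tensor t-structure; this is $(2)\Rightarrow(1)$.

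For $(2)\Rightarrow(3)$: fix $X\in\cat T$ and $B\in\cat V=\cat U^{\perp}$. To see $\mathscr{H}om(X,B)\in\cat V=\cat U^{\perp}$ it suffices to check $\Hom(A,\mathscr{H}om(X,B))=0$ for every $A\in\cat U$. By the adjunction this group is $\Hom(A\otimes X,B)$, and $A\otimes X\in\cat U$ because $\cat U$ is a \tensor ideal and $X\in\cat T$; since $B\in\cat U^{\perp}$ the group vanishes. For $(3)\Rightarrow(2)$: fix $A\in\cat U$ and $X\in\cat T$. To see $A\otimes X\in\cat U={}^{\perp}\cat V$ it suffices to check $\Hom(A\otimes X,B)=0$ for every $B\in\cat V$. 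By the adjunction this group is $\Hom(A,\mathscr{H}om(X,B))$; since $\cat V$ is a coideal we have $\mathscr{H}om(X,B)\in\cat V$, and since $A\in{}^{\perp}\cat V$ the group vanishes. This closes the cycle.

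I do not anticipate a genuine obstacle: once the structural facts from Section \ref{section 1} are in hand, the argument is a direct unwinding of the adjunction. The one point needing care is the perpendicular bookkeeping — using $\cat V=\cat U^{\perp}$ (definition of the coaisle) together with the totality $\cat U={}^{\perp}\cat V$ (Lemma \ref{aisle total}) — and checking that the adjunction is applied on the correct side so that no spurious shift is introduced; stability of the t-structure is what makes $\cat U$ and $\cat V$ thick, which is exactly what is needed for $(2)$ and $(3)$ to be the correct formulations.
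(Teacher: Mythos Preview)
Your proof is correct and follows essentially the same approach as the paper's: the paper proves $(1)\Leftrightarrow(2)$ by citing Lemma \ref{tensor lemma 1} and $(2)\Leftrightarrow(3)$ by citing the adjunction argument of Proposition \ref{tensor proposition}, and you have simply spelled out both steps in full, including the easy converse $(2)\Rightarrow(1)$ and the thickness check needed for the words ``ideal'' and ``coideal'' to apply.
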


\begin{proof}
	(1)$\Leftrightarrow$(2) follows from Lemma \ref{tensor lemma 1}.  (2)$\Leftrightarrow$(3)
	follows from the adjunction isomorphism - same as the proof of
	Proposition \ref{tensor proposition}.  
\end{proof}

Let $\cat T$ be a tensor triangulated category and \cat S be a class of objects of \cat T.  We 
denote the smallest cocomplete $\otimes$-preaisle containing \cat S  by \taisle S.  The 
following result is a version of \cite [Lemma 1.1]{HR17} in the present context.

\begin{lemma}
\label{key1}
Let $\cat T$ and $\cat T ^{\prime}$ be two tensor triangulated categories with preaisles 
$\aisle T$ and $\aisle {\prim{T}}$ respectively.  
Let $F : \cat T \rar \cat T ^{\prime}$ be a coproduct preserving tensor triangulated functor. 
We assume $F$ to be right-t-exact, that is, $F(\cat T ^{\leq 0}) \subset \cat T ^{\prime \leq 0}$.  
Then the following hold:
	\begin{itemize}
		\item[(1)] If $\cat U ^{\prime}$ is a cocomplete $\otimes$-preaisle of $\cat T ^{\prime}$, 
		then the full subcategory $F^{-1} (\cat U ^{\prime})$ of $\cat T$ is a cocomplete 
		$\otimes$-preaisle. 
		
		\item[(2)] If $\cat S \subset \cat T$ is a class of objects,  then $\langle F(\cat S) 
		\rangle^{\leq 0} _{\otimes} = \langle F( \langle \cat S \rangle^{\leq 0} _{\otimes} ) 
		\rangle^{\leq 0} _{\otimes}.$
	\end{itemize}

\end{lemma}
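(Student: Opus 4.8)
The plan is to prove (1) and (2) separately, with (1) being a routine verification and (2) the substantive part. For (1), I would show that $F^{-1}(\cat U')$ inherits each defining property of a cocomplete $\otimes$-preaisle from $\cat U'$. Closure under positive shifts is immediate since $F$ commutes with shifts; closure under extensions follows because $F$ is triangulated, so a triangle $A \to B \to C \to A[1]$ in $\cat T$ maps to a triangle in $\cat T'$, and if $FA, FC \in \cat U'$ then $FB \in \cat U'$ by (a2) for $\cat U'$. Closure under small coproducts uses that $F$ preserves coproducts. Finally, for the $\otimes$-condition: if $X \in \aisle T$ and $A \in F^{-1}(\cat U')$, then $F(X \otimes A) \cong FX \otimes FA$ since $F$ is a tensor functor, and $FX \in \cat T'^{\leq 0}$ by right-t-exactness while $FA \in \cat U'$, so $FX \otimes FA \in \cat U'$ because $\cat U'$ is a $\otimes$-preaisle; hence $X \otimes A \in F^{-1}(\cat U')$.

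For (2), the inclusion $\taisle{F(\cat S)} \subseteq \taisle{F(\taisle{\cat S})}$ is trivial since $\cat S \subseteq \taisle{\cat S}$ implies $F(\cat S) \subseteq F(\taisle{\cat S})$, and $\taisle{-}$ is monotone. The content is the reverse inclusion $\taisle{F(\taisle{\cat S})} \subseteq \taisle{F(\cat S)}$. Here I would apply part (1): the subcategory $\cat U' := \taisle{F(\cat S)}$ is a cocomplete $\otimes$-preaisle of $\cat T'$, so by (1) the preimage $F^{-1}(\cat U')$ is a cocomplete $\otimes$-preaisle of $\cat T$. It evidently contains $\cat S$, since $F(\cat S) \subseteq \taisle{F(\cat S)} = \cat U'$. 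Because $\taisle{\cat S}$ is the \emph{smallest} cocomplete $\otimes$-preaisle containing $\cat S$, we get $\taisle{\cat S} \subseteq F^{-1}(\cat U')$, i.e. $F(\taisle{\cat S}) \subseteq \cat U' = \taisle{F(\cat S)}$. Applying $\taisle{-}$ to both sides and using that $\taisle{\cat U'} = \cat U'$ (it is already a cocomplete $\otimes$-preaisle) gives $\taisle{F(\taisle{\cat S})} \subseteq \taisle{F(\cat S)}$, completing the proof.

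The main obstacle — really the only place any care is needed — is part (1), specifically confirming that the $\otimes$-condition transports correctly along $F$; everything hinges on the two hypotheses that $F$ is a tensor functor (so $F(X \otimes A) \cong FX \otimes FA$ naturally) and that $F$ is right-t-exact (so that the generating aisle $\aisle T$ lands inside $\aisle{\prim T}$). Once (1) is in hand, part (2) is a formal "smallest-subcategory" argument of exactly the type already used in the proof of Lemma \ref{total 2} and Lemma \ref{total 3}. One minor point to state cleanly is that $\taisle{\cat U'} = \cat U'$ whenever $\cat U'$ is itself a cocomplete $\otimes$-preaisle, which is immediate from the definition of $\taisle{-}$ as the smallest such containing the given class.
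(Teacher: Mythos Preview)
Your proposal is correct and follows essentially the same approach as the paper: part (1) is verified property by property exactly as you describe, and part (2) is the same ``smallest-subcategory'' argument, applying (1) to $\cat U' = \taisle{F(\cat S)}$ to conclude $\taisle{\cat S} \subseteq F^{-1}(\cat U')$ and hence $F(\taisle{\cat S}) \subseteq \taisle{F(\cat S)}$.
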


\begin{proof}

	Let $\cat U ^{\prime}$ be a cocomplete preaisle of $\cat T ^{\prime}$.  If $A \in F ^{-1}
	(\cat U ^{\prime})$,  then $F(A) \in \cat U ^{\prime}$ so $F(A)[1] \in \cat U ^{\prime}$ and 		
	this implies $A[1] \in F^{-1}(\cat U ^{\prime})$. Given a triangle $A \rar B \rar C \rar A[1]$ in $
	\cat T$ with $A$ and $C$ in $ F^{-1}(\cat U ^{\prime})$.  We have $F(A) \rar F(B) \rar F(C) 
	\rar F(A)[1]$ in $\cat T ^{\prime}$ with $F(A)$ and $F(C)$ in $\cat U ^{\prime}$ hence $F(B) 
	\in \cat U ^{\prime}$,  and we get $B \in F^{-1}(\cat U ^{\prime})$.  This proves $F^{-1}
	(\cat U ^{\prime})$ is a preaisle of $\cat T$.  The fact $F$ preserves coproducts will imply 
	that if $\cat U ^{\prime}$ is cocomplete then $F^{-1}(\cat U ^{\prime})$ is cocomplete.  Let 
	$\cat U ^{\prime}$ be a cocomplete $\otimes$-preaisle of $\cat T ^{\prime}$.  For any $A 
	\in F^{-1}(\cat U ^{\prime})$ and $B \in \cat T ^{\leq 0}$,  we have $F(A\otimes B) = F(A) 
	\otimes F(B) \in \cat U ^{\prime}$ and hence $A \otimes B \in F^{-1}(\cat U ^{\prime})$.  This 
	shows  $ F^{-1}(\cat U ^{\prime})$ is a cocomplete $\otimes$-preaisle.
	
	Now part (2).  For any class of objects $\cat S \subset \cat T$, we have
	 $\langle F(\cat S) \rangle ^{\leq 0} _\otimes \subset \langle F( \langle \cat S \rangle \pre 0_
	 \otimes) \rangle \pre 0 _\otimes$.   By (1), $ F ^{-1} ( \langle F( \cat S ) \rangle \pre 0 
	 _\otimes )$ is a cocomplete \tensor preaisle.  Since $\cat S \subset F ^{-1} ( \langle F( \cat S 
	 ) \rangle \pre 0 _\otimes )$  we have $ \langle \cat S \rangle \pre 0_\otimes
	 \subset F ^{-1} ( \langle F(\cat S )\rangle \pre 0_\otimes )$.  This implies $F(\langle \cat 
	 S \rangle \pre 0_\otimes ) \subset  \langle F(\cat S) \rangle \pre 0_\otimes $.  Therefore,  $
	 \langle F(\langle \cat S \rangle \pre 0_\otimes) \rangle \pre 0 _\otimes\subset
	 \langle F(\cat S) \rangle \pre 0_\otimes $.
	 
\end{proof}

Let $\cat T$ be a rigidly compactly generated tensor triangulated category in the sense of 
Balmer \cite[1.1]{BF11}.  In particular, $\mathbf{1} \in \cat T^c$ and the tensor product 
$\otimes$ restricts to $\cat T^c$.  In this case, we can prove a tensor triangulated analogue of 
Proposition \ref{keller}.

\begin{proposition}
\label{keller tensor}
	Let $\cat T$ be a rigidly compactly generated tensor triangulated category with a preaisle 
	$\aisle T$. 
	Suppose $ \aisle T$ is compactly generated and $\cat T^c \cap \aisle T = \cat K$.  
	Let \cat S be a set of compact objects of \cat T.  Then,
	
	\begin{itemize}
	\item[(1)] \taisle S is an aisle of 	\cat T.
	
	\item[(2)] Every object $A$ of \taisle S fits in a triangle
	\[ \coprod_{i \geq 0} A_i \rar A \rar \coprod_{i \geq 0} A_i[1] \rar \coprod_{i \geq 0} A_i[1] \]
	where $A_i$ is an $i$-fold extension of small coproducts of non negative shifts 
	of objects of $\cat K \otimes \cat S$.
	\end{itemize}
\end{proposition}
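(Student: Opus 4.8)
The plan is to reduce both statements to Proposition \ref{keller} by showing that \taisle S equals the ordinary cocomplete preaisle $\langle \cat K \otimes \cat S \rangle^{\leq 0}$ generated by the set $\cat K \otimes \cat S$ of all objects $K \otimes S$ with $K \in \cat K$ and $S \in \cat S$. Two preliminary remarks make this legitimate. First, $\cat K \otimes \cat S$ is (up to isomorphism) a set of compact objects: $\cat T^c$ is essentially small because $\cat T$ is compactly generated, so $\cat K = \cat T^c \cap \aisle T$ is essentially small, and each $K \otimes S$ is compact since $\otimes$ restricts to $\cat T^c$. Second, since $\aisle T$ is compactly generated, say $\aisle T = \langle \cat S' \rangle^{\leq 0}$ for a set $\cat S'$ of compact objects, the inclusions $\cat S' \subseteq \cat K \subseteq \aisle T$ together with the fact that $\aisle T$ is a cocomplete preaisle give $\aisle T = \langle \cat S' \rangle^{\leq 0} \subseteq \langle \cat K \rangle^{\leq 0} \subseteq \aisle T$, hence $\aisle T = \langle \cat K \rangle^{\leq 0}$; that is, $\aisle T$ is generated by \cat K in the sense of Lemma \ref{tensor generator 1}.

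The core of the argument is the identity $\taisle S = \langle \cat K \otimes \cat S \rangle^{\leq 0}$. For the inclusion $\langle \cat K \otimes \cat S \rangle^{\leq 0} \subseteq \taisle S$: since $\cat K \subseteq \aisle T$ and \taisle S is a \tensor preaisle containing \cat S, we get $\cat K \otimes \cat S \subseteq \aisle T \otimes \taisle S \subseteq \taisle S$, and the inclusion follows because \taisle S is a cocomplete preaisle. For the reverse inclusion, by minimality of \taisle S it suffices to check that $\langle \cat K \otimes \cat S \rangle^{\leq 0}$ is a cocomplete \tensor preaisle containing \cat S. It contains \cat S because $\mathbf{1} \in \cat T^c \cap \aisle T = \cat K$ (rigidity, together with the standing hypothesis $\mathbf{1} \in \aisle T$), so $S \cong \mathbf{1} \otimes S \in \cat K \otimes \cat S$ for each $S \in \cat S$. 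That it is a \tensor preaisle follows, via Lemma \ref{tensor generator 1} applied to the generating set \cat K, once we show $\cat K \otimes \langle \cat K \otimes \cat S \rangle^{\leq 0} \subseteq \langle \cat K \otimes \cat S \rangle^{\leq 0}$: for fixed $K \in \cat K$, the full subcategory $\{X \in \cat T \mid K \otimes X \in \langle \cat K \otimes \cat S \rangle^{\leq 0}\}$ is a cocomplete preaisle (as $K \otimes -$ is triangulated and coproduct-preserving), and it contains $\cat K \otimes \cat S$, because for $K' \in \cat K$ we have $K \otimes K' \in \cat T^c \cap \aisle T = \cat K$ (using $\aisle T \otimes \aisle T \subseteq \aisle T$ and that $\otimes$ restricts to $\cat T^c$), so $K \otimes (K' \otimes S) = (K \otimes K') \otimes S \in \cat K \otimes \cat S$; hence this subcategory contains $\langle \cat K \otimes \cat S \rangle^{\leq 0}$.

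Granting the identity, part (1) is immediate from Proposition \ref{keller}(1), which says $\langle \cat K \otimes \cat S \rangle^{\leq 0}$ is an aisle, and part (2) is exactly Proposition \ref{keller}(2) applied to the set $\cat K \otimes \cat S$. I expect the only genuinely delicate point to be the verification that $\langle \cat K \otimes \cat S \rangle^{\leq 0}$ is a \tensor preaisle, i.e. that it is stable under $K \otimes -$ for every $K \in \cat K$; the set-theoretic remark about $\cat T^c$ and the two inclusions comprising the identity are routine.
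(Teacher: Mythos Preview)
Your proof is correct and follows essentially the same approach as the paper: both reduce to Proposition~\ref{keller} by establishing the identity $\taisle S = \langle \cat K \otimes \cat S \rangle^{\leq 0}$, using $\cat K \otimes \cat K \subseteq \cat K$ to show the right-hand side is a \tensor preaisle (via Lemma~\ref{tensor generator 1}) and $\mathbf{1} \in \cat K$ to show it contains $\cat S$. Your version is slightly more careful in making explicit the preliminary points (that $\cat K \otimes \cat S$ is a set of compacts, and that $\aisle T = \langle \cat K \rangle^{\leq 0}$ so Lemma~\ref{tensor generator 1} applies), which the paper leaves implicit.
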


\begin{proof}

	If we show  \[\langle \cat K \otimes \cat S \rangle ^{\leq 0} = \taisle S\] then 
	by Proposition \ref{keller}, our claim follows.  Since $\cat K \otimes \cat S \subset 
	\taisle S$,  we have $\langle \cat K \otimes \cat S \rangle ^{\leq 0} \subset \taisle S$.  
	
	For convenience, we denote $\langle \cat K \otimes \cat S \rangle ^{\leq 0}$ by \cat U.  
	Let \cat A = $\{ X \in \cat U \mid \cat K \otimes X \subset \cat U \}$. 
	As in the proof of Lemma \ref{tensor generator 1}, it can be checked that \cat A is 
	a cocomplete preaisle.  Since $\cat K \otimes \cat K \subset \cat K$ we have 
	$\cat K \otimes \cat S \subset \cat A$,  and this proves $\cat A = \cat U$.  Which means
	$\cat K \otimes \cat U \subset \cat U$ now by Lemma \ref{tensor generator 1} we get 
	\cat U is a tensor preaisle.  Since $\mathbf{1} \in \cat K$ we have $\cat S \subset \cat U$,
	this shows $\taisle S \subset \cat U$.
	
\end{proof}

\section{A class of tensor t-structures in \derive X}
\label{section 3}

Let $X$ be a quasi-compact quasi-separated scheme.  We denote the derived category of  
complexes of $\CO _X$-modules with quasi-coherent cohomology by $\derive X$.
The derived category $(\derive X, \otimes ^{L} _{\CO _X},\CO_X)$ is a tensor triangulated 
category with the derived tensor product $\otimes ^{L} _{\CO _X}$  and the structure sheaf 
$\CO_X$ as the unit.  The full subcategory of complexes whose cohomologies vanish in 
positive degree $\mathbf{D}_{qc} ^{\leq 0}(X)$ is a preaisle of $\derive X$.  

\emph{We define the \tensor preaisles of $\derive X$ with respect to the standard preaisle
$\mathbf{D}_{qc} ^{\leq 0}(X)$.}

In the affine situation when $X = \spec R$, note that $R$ generates the  preaisle 
$\mathbf{D} ^{\leq 0} (R)$.  Hence by  Proposition \ref{affine aisle} every cocomplete
preaisle of $\mathbf{D}(R)$ is a \tensor preaisle.  This has been well known; see for instance 
\cite[Proposition 1.10]{AJS10} also \cite[Proposition 2.2]{Hrbek}.

The following is a characterization of \tensor preaisles in $\derive X$.  For separated schemes
with an ample family of line bundles (a.k.a.  divisorial),  see \cite[Proposition 5.1]{AJS03} for a 
stronger result; note that what we call \tensor aisle has been termed as \emph{0-rigid} in 
\cite[\S 5]{AJS03}.

\begin{lemma}
\label{tensor test}
	Let \cat U be a cocomplete preaisle of $\derive X$.  The following are equivalent:
	\begin{itemize}
	\item[(1)] The preaisle \cat U is a \tensor preaisle.
	\item[(2)]  For every $G \in \cat U$ and $F \in \mathbf{D}_{qc} 
	^{\leq 0}(X)$ we have 
	$F \otimes ^{L} _{\CO _X}G \in \cat U$.
	\item[(3)] For every $G \in \cat U$ and $F$ a quasi-coherent $\CO _X$-module,  we have 
	$F \otimes
	^{L} _{\CO _X} G \in \cat U$.
	\item[(4)] For every $G \in \cat U$ and $F$ a flat quasi-coherent $\CO _X$-module,  we have $F \otimes
	^{L} _{\CO _X} G \in 
	\cat U$.
	\end{itemize}
\end{lemma}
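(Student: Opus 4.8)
The plan is to prove the chain of implications
\[
(1) \Rightarrow (2) \Rightarrow (3) \Rightarrow (4) \Rightarrow (1),
\]
using the fact that $\mathbf{D}_{qc}^{\leq 0}(X)$ is built up from sheaves sitting in degree $0$ and that \cat U, being a cocomplete preaisle, is closed under positive shifts, extensions, small coproducts, and (by the Eilenberg swindle) direct summands.

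First, $(1) \Rightarrow (2)$ is immediate, since $(2)$ is literally the defining condition of a \tensor preaisle with the roles of the two tensor factors swapped, and $\otimes^L_{\CO_X}$ is symmetric. For $(2) \Rightarrow (3)$: if $F$ is an $\CO_X$-module, regarded as a complex concentrated in degree $0$, then $F \in \mathbf{D}_{qc}^{\leq 0}(X)$ provided $F$ is quasi-coherent; for a general $\CO_X$-module one replaces $F$ by a quasi-coherent flasque (or Godement) resolution, or more simply notes that the derived tensor product $F \otimes^L_{\CO_X} G$ only depends on $F$ through its associated object of $\derive X$ when $G$ has quasi-coherent cohomology, so one may assume $F$ quasi-coherent; then apply $(2)$. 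The implication $(3) \Rightarrow (4)$ is trivial, a flat $\CO_X$-module being in particular an $\CO_X$-module.

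The substantive step is $(4) \Rightarrow (1)$. Here I would argue: to show \cat U is a \tensor preaisle I must show $\mathbf{D}_{qc}^{\leq 0}(X) \otimes^L_{\CO_X} \cat U \subset \cat U$. Fix $G \in \cat U$ and let $F \in \mathbf{D}_{qc}^{\leq 0}(X)$; I want $F \otimes^L_{\CO_X} G \in \cat U$. The idea is to resolve $F$ by flat $\CO_X$-modules. Since $F$ has cohomology only in non-positive degrees, one can choose a (possibly unbounded) complex $P$ of flat $\CO_X$-modules, concentrated in non-positive degrees, quasi-isomorphic to $F$, such that $F \otimes^L_{\CO_X} G$ is computed by the totalization of $P \otimes_{\CO_X} G$ (using a K-flat resolution, which can be taken bounded above since $F$ is). Writing $\sigma^{\geq -n} P$ for the stupid (brutal) truncations, each $\sigma^{\geq -n} P \otimes_{\CO_X} G$ is a finite extension of shifts $(P^{-j} \otimes_{\CO_X} G)[j]$ with $0 \le j \le n$, each of which lies in \cat U by $(4)$ together with closure under non-negative shifts and extensions; and $F \otimes^L_{\CO_X} G$ is the homotopy colimit of these truncations, hence sits in a triangle $\coprod \sigma^{\geq -n} P \otimes G \to \coprod \sigma^{\geq -n}P \otimes G \to F \otimes^L_{\CO_X} G$, so by cocompleteness and closure under extensions $F \otimes^L_{\CO_X} G \in \cat U$.

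The main obstacle I anticipate is the bookkeeping around K-flat resolutions of an unbounded-below complex $F$ and the identification of $F \otimes^L_{\CO_X} G$ as the relevant homotopy colimit: one must ensure the flat resolution can be chosen supported in non-positive degrees (so the stupid truncations land in \cat U) and that passing to the colimit is legitimate in $\derive X$. For a Noetherian (or even just quasi-compact quasi-separated) scheme this is standard, but it should be spelled out, perhaps by first reducing via Proposition \ref{tensor proposition}-style adjunction arguments or by citing the existence of K-flat resolutions for complexes of $\CO_X$-modules; an alternative, cleaner route is to observe that the full subcategory $\{ F \in \mathbf{D}_{qc}^{\leq 0}(X) : F \otimes^L_{\CO_X} G \in \cat U \text{ for all } G \in \cat U\}$ is a cocomplete preaisle containing all flat modules in degree $0$, hence contains $\mathbf{D}_{qc}^{\leq 0}(X) = \langle \{\text{flat modules}\}\rangle^{\leq 0}$, which finesses the explicit homotopy-colimit computation.
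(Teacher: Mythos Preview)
Your proof is correct and follows essentially the same route as the paper: the substantive step $(4)\Rightarrow(2)$ is handled there, exactly as you propose, by replacing $F$ with a K-flat resolution $E$ concentrated in non-positive degrees (citing \cite[Proposition~2.5.5]{Lipman}), writing $E$ as the colimit of its brutal truncations, and inducting along the triangles $\sigma^{\geq -(n-1)}E \to \sigma^{\geq -n}E \to E^{-n}[n]$ tensored with $G$. Your detour in $(2)\Rightarrow(3)$ is unnecessary---the $\CO_X$-modules in (3) and (4) should be read as quasi-coherent (otherwise $F\otimes^L_{\CO_X} G$ need not lie in $\derive X$ at all), so (3) is simply a specialization of (2) and the paper correctly calls the implication obvious.
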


\begin{proof}

	(1)$\Leftrightarrow$(2) follows from the definition. The implications (2) $\Rightarrow$ (3) $
	\Rightarrow$ (4) are obvious. We prove (4) $\Rar$ (2).  Let $F \in \mathbf{D}_{qc} ^{\leq 0}
	(X)$,  we can replace $F$ by a K-flat resolution say $E$,  such that
	$E$ is in $ \mathbf{D}_{qc} ^{\leq 0}(X)$ and each component $E^{i}$ is  a flat $\CO _X$-
	module \cite[Proposition 2.5.5]{Lipman}.  For each integer $n$ we denote $\sigma ^{\geq n} 
	E$ the brutal truncation of $E$ from below.  Note that $E$ is the colimit of the directed 
	system $\{ \sigma^{\geq n} E \}$ with the obvious maps.
	
	For any $G \in \cat U$,  to show $E \otimes ^{L} _{\CO _X} G \in \cat U$ it is 
	enough to show for each $n \in \N$,  $\sigma ^{\geq -n} E \otimes ^{L} _{\CO _X} G \in
	 \cat U$ since \cat U is cocomplete.  We prove by induction on $n$.  We have the following 
	 distinguished triangle for each $n$,
	\[ \sigma^{\geq -(n -1)} E \rar \sigma ^{\geq -n} E \rar E^{-n}[n] \rar \sigma^{\geq -(n-1)} E[1] 
	.\]
	
	Here $E^{-n}[n]$ is the complex which is zero everywhere except at $-n$th entry, where it is 
	precisely $E^{-n}$: the $-n$th component of the complex E.  As $E^{-n}$ is a 
	flat $\CO _X$-module and \cat U is closed under positive shifts, by (4) we have  $E^{-n}[n] 
	\otimes ^{L} _{\CO _X} G \in \cat U$.  Now by induction hypothesis we assume $
	\sigma^{\geq -(n -1)} E \otimes ^{L} _{\CO _X} G \in \cat U$,  this proves the claim since $
	\cat U $ is closed 
	under extensions.
	
\end{proof}

\subsection{Filtrations of supports and associated subcategories}

Let $E$ be a complex in $\derive X$.  The \emph{cohomological support} of $E$ 
\cite[Definition 3.2]{Thomason} is defined to be the subspace  
\[ \Supph(E) = \bigcup _{n \in \Z} \Supp (H^n(E)). \] 

We introduce an auxiliary notation,  
 \[\Supph ^{\geq i} (E) = \bigcup_{j \geq i} \Supp( H^j(E)).\]

\begin{lemma}
\label{support lemma 1}
	Let $ A \rar B \rar C \rar A[1]$ be a distinguished triangle in $\derive X$. Then,
	\[\Supph^{\geq i}(B) \subset \Supph^{\geq i}(A) \bigcup \Supph^{\geq i}(C).\]
\end{lemma}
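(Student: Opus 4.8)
The plan is to use the long exact sequence of cohomology sheaves associated to the distinguished triangle $A \to B \to C \to A[1]$, namely
\[
\cdots \to H^{j}(A) \to H^{j}(B) \to H^{j}(C) \to H^{j+1}(A) \to \cdots.
\]
From exactness at $H^{j}(B)$, the sheaf $H^{j}(B)$ sits in a short exact sequence with a subsheaf that is a quotient of $H^{j}(A)$ and a quotient that is a subsheaf of $H^{j}(C)$; hence for each stalk, if $H^{j}(B)_x \neq 0$ then either $H^{j}(A)_x \neq 0$ or $H^{j}(C)_x \neq 0$. This gives the pointwise containment $\Supp(H^{j}(B)) \subset \Supp(H^{j}(A)) \cup \Supp(H^{j}(C))$ for every single degree $j$.

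Next I would take the union over all $j \geq i$. Since union distributes over union, $\bigcup_{j \geq i}\Supp(H^{j}(B)) \subset \bigl(\bigcup_{j\geq i}\Supp(H^{j}(A))\bigr)\cup\bigl(\bigcup_{j\geq i}\Supp(H^{j}(C))\bigr)$, which is exactly $\Supph^{\geq i}(B) \subset \Supph^{\geq i}(A) \cup \Supph^{\geq i}(C)$ by definition of the auxiliary notation. That completes the argument.

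There is essentially no main obstacle here: the only point requiring a word of care is the claim about stalks, which follows because taking stalks is exact, so the long exact sequence of cohomology sheaves becomes a long exact sequence of $\CO_{X,x}$-modules at each point $x$, and exactness there forces $H^{j}(B)_x = 0$ whenever both $H^{j}(A)_x = 0$ and $H^{j}(C)_x = 0$ (the map $H^{j}(A)_x \to H^{j}(B)_x$ is then zero and the map $H^{j}(B)_x \to H^{j}(C)_x$ is then injective). The degree-wise truncation of the index set to $j \geq i$ plays no role in the cohomological argument — it is purely bookkeeping in the final union — so the lemma is immediate once the degree-by-degree statement is in hand.
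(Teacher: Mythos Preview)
Your proof is correct and follows essentially the same approach as the paper: use the long exact cohomology sequence of the triangle to obtain $\Supp(H^{j}(B)) \subset \Supp(H^{j}(A)) \cup \Supp(H^{j}(C))$ for each $j$, then take the union over $j \geq i$. You have simply supplied more detail on the stalkwise verification than the paper does.
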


\begin{proof}
	From the long exact cohomology sequence we have
	\[\Supp (H^j(B)) \subset \Supp (H^j(A)) \bigcup \Supp (H^j(C)).\]
	
	Now taking union over $j \geq i$ the result follows.
\end{proof}

\begin{lemma}
\label{support lemma 2}
	For any $E \in \derive X$,  we have \[ \Supph^{\geq i}(E[1]) \subset \Supph^{\geq i}(E). \]
\end{lemma}

\begin{proof}
	Note that $ \Supph^{\geq i}(E[1]) = \Supph^{\geq i+1}(E)$.  And
	$\Supph^{\geq i+1}(E) \subset \Supph^{\geq i}(E)$ follows from the definition of $\Supph 
	^{\geq i}(-)$.
\end{proof}

\begin{lemma}
\label{support lemma 3}
	Let $\{ E_{\alpha}\}$ be a set of objects in $\derive X$.  Then,
	\[\Supph^{\geq i}(\oplus_{\alpha} E_{\alpha}) = \bigcup_{\alpha} \Supph^{\geq i}(E_{\alpha}).
	\]
\end{lemma}

\begin{proof}
	It is enough to show $\Supp ( H^i( \oplus_{\alpha} E_{\alpha})) =  \bigcup_{\alpha} \Supp 
	(H^{i}(E_{\alpha}))$.  As cohomology commutes with direct sums we have $H^i( 
	\oplus_{\alpha} E_{\alpha}) \cong \oplus_{\alpha} H^i(E_{\alpha})$ so
	
	\[\Supp (H^i( \oplus_{\alpha} E_{\alpha})) =
	\Supp ( \oplus_{\alpha} H^i(E_{\alpha})) =  \bigcup_{\alpha} \Supp (H^{i}(E_{\alpha})).\] 
\end{proof}

\begin{lemma}
\label{support lemma 4}
	Let $B \in \derive X$.  For any flat quasi-coherent $\CO _X$-module $F$ and $n \geq 0$, we 
	have
	\[\Supph ^{\geq i} ( F [n] \otimes^{\textbf{L}}_{\CO_X} B) \subset \Supph ^{\geq i}(B).\]
\end{lemma}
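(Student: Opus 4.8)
The plan is to reduce the statement about the tensor product $F[n] \otimes^{\textbf{L}}_{\CO_X} B$ to a pointwise (stalk-level) computation, where flatness makes the derived tensor product exact. First I would observe that it suffices to prove the case $n = 0$, since $\Supph^{\geq i}(F[n] \otimes^{\textbf{L}} B) = \Supph^{\geq i+n}(F \otimes^{\textbf{L}} B)$ and, by Lemma \ref{support lemma 2} (applied $n$ times), $\Supph^{\geq i+n}(F \otimes^{\textbf{L}} B) \subset \Supph^{\geq i}(F \otimes^{\textbf{L}} B)$; so once we know $\Supph^{\geq i}(F \otimes^{\textbf{L}} B) \subset \Supph^{\geq i}(B)$ for all $i$, the shifted version follows. (Alternatively one can just carry the shift through the whole argument; the $n=0$ reduction is merely cosmetic.)

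Next, since the question is about supports, which are detected stalkwise, I would work at a point $x \in X$ and analyse the cohomology sheaves. For a flat $\CO_X$-module $F$, the functor $F \otimes_{\CO_X} -$ is exact, so $F \otimes^{\textbf{L}}_{\CO_X} B \cong F \otimes_{\CO_X} B$ in $\derive X$ and, taking cohomology, $H^j(F \otimes_{\CO_X} B) \cong F \otimes_{\CO_X} H^j(B)$ for every $j$ (flatness lets the tensor commute with the cohomology of the complex). Consequently $\Supp\big(H^j(F \otimes_{\CO_X} B)\big) = \Supp\big(F \otimes_{\CO_X} H^j(B)\big) \subset \Supp\big(H^j(B)\big)$, because $\big(F \otimes_{\CO_X} H^j(B)\big)_x \cong F_x \otimes_{\CO_{X,x}} H^j(B)_x$ vanishes whenever $H^j(B)_x = 0$. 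Taking the union over $j \geq i$ then gives exactly $\Supph^{\geq i}(F \otimes_{\CO_X} B) \subset \Supph^{\geq i}(B)$.

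The only mild subtlety — and the step I would be most careful about — is the identification $H^j(F \otimes^{\textbf{L}}_{\CO_X} B) \cong F \otimes_{\CO_X} H^j(B)$ for an arbitrary (not necessarily perfect or bounded) complex $B$. Here I would invoke a K-flat resolution $E \xrightarrow{\sim} B$ with flat components, so that $F \otimes^{\textbf{L}}_{\CO_X} B$ is computed by the honest complex $F \otimes_{\CO_X} E$; since $F$ is flat, $F \otimes_{\CO_X} -$ is exact on $\CO_X$-modules and hence commutes with taking the cohomology sheaves of $E$, yielding $H^j(F \otimes_{\CO_X} E) \cong F \otimes_{\CO_X} H^j(E) \cong F \otimes_{\CO_X} H^j(B)$. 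With that identification in hand, the support inclusion is immediate and the proof closes. I do not anticipate any genuine obstacle; the argument is entirely formal once flatness is used to make the derived tensor exact.
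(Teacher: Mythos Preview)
Your proposal is correct and follows essentially the same approach as the paper: both reduce to the case $n=0$ via the obvious shift identity $\Supph^{\geq i}(F[n]\otimes^{\mathbf L}B)=\Supph^{\geq i+n}(F\otimes^{\mathbf L}B)\subset\Supph^{\geq i}(F\otimes^{\mathbf L}B)$, and then use flatness of $F$ to identify $H^j(F\otimes^{\mathbf L}_{\CO_X}B)\cong F\otimes_{\CO_X}H^j(B)$ and conclude $\Supp(F\otimes H^j(B))\subset\Supp(H^j(B))$. Your extra care with a K-flat resolution of $B$ is harmless but unnecessary, since a flat module $F$ concentrated in degree $0$ is already K-flat, so $F\otimes^{\mathbf L}_{\CO_X}B=F\otimes_{\CO_X}B$ directly.
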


\begin{proof}

	First we prove for $n = 0$.  Since $F$ is a flat $\CO _X$-module we have $F \otimes 
	^{\textbf{L}} _{\CO _X} B =
	F \otimes _{\CO _X} B$.  The flatness of $F$ implies $H^i(F \otimes  _{\CO _X} B) 
	\cong F \otimes _{\CO _X} H^i(B)
	$.  Now $\Supp (F \otimes _{\CO _X} H^i(B)) \subset \Supp(H^i(B)),$ hence we get 
	\[ \Supp (H^{i} ( F \otimes^{\textbf{L}}_{\CO_X} B)) \subset \Supp (H ^{ i} (B)). \]  And this 
	proves 
	\[ \Supph ^{\geq i} ( F \otimes^{\textbf{L}}_{\CO_X} B) \subset \Supph ^{\geq i} (B). \]
	
	Next for $n > 1$.  By Lemma \ref{support lemma 2},  we have
	\begin{align*}
	\Supph ^{\geq i} ( F [n] \otimes^{\textbf{L}}_{\CO_X} B)
	& = \Supph ^{\geq i} (( F \otimes^{\textbf{L}}_{\CO_X} B)[n]) \\
	& \subset \Supph ^{\geq i} ( F \otimes^{\textbf{L}}_{\CO_X} B).\\
	\end{align*}
	 Now,  applying $n = 0$ case we get
	 \[\Supph ^{\geq i} ( F [n] \otimes^{\textbf{L}}_{\CO_X} B) \subset \Supph ^{\geq i}(B).\]

\end{proof}

 \begin{definition}
 \label{U phi}
 A \emph{filtration of supports of $X$} is a function $\phi$ from $\Z$ with values in the set of 
 subsets of $X$ such that  $\phi (i+1) \subset \phi (i) $ for each $i \in \Z$.  
  Let $\phi$ be a filtration of supports of $X$.  The subcategory of $derive X$ associated to 
  $\phi$ and denoted by $\cat U_{\phi}$
  is the full subcategory containing objects $E$ such that $ \Supp (H^i(E)) \subset \phi (i)$ for 
  each $i \in \Z$. 
 
\end{definition}

\begin{proposition}
\label{U}
	Let $\phi$ be a filtration of supports of $X$ and  $\cat U_{\phi}$ be the associated 
	subcategory of $\derive X$.  Then $\cat U_{\phi}$ is a cocomplete \tensor preaisle of $\derive X$.
\end{proposition}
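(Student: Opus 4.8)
The plan is to verify that $\cat U_\phi$ satisfies the three closure conditions in the definition of cocomplete $\otimes$-preaisle: closed under positive shifts (a1), closed under extensions (a2), closed under small coproducts, and closed under tensoring with $\mathbf{D}_{qc}^{\leq 0}(X)$. All four reduce to elementary statements about cohomological supports, and the auxiliary lemmas \ref{support lemma 1}--\ref{support lemma 4} have been set up precisely for this purpose, so the proof is essentially a matter of assembling them. I would not use $\Supph^{\geq i}$ in a heavy way here, but rather its pointwise version $\Supp(H^i(-))$, since the condition defining $\cat U_\phi$ is the pointwise one $\Supp(H^i(E))\subset\phi(i)$; the monotonicity $\phi(i+1)\subset\phi(i)$ of the filtration is what lets the shift work out.

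\textbf{Positive shifts.} For $E\in\cat U_\phi$ and $n\geq 0$, one has $H^i(E[n])=H^{i+n}(E)$, so $\Supp(H^i(E[n]))=\Supp(H^{i+n}(E))\subset\phi(i+n)\subset\phi(i)$, the last inclusion by iterating $\phi(j+1)\subset\phi(j)$. Hence $E[n]\in\cat U_\phi$, giving (a1).

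\textbf{Extensions and coproducts.} Given a distinguished triangle $A\to B\to C\to A[1]$ with $A,C\in\cat U_\phi$, the long exact cohomology sequence gives $\Supp(H^i(B))\subset\Supp(H^i(A))\cup\Supp(H^i(C))\subset\phi(i)$ for every $i$ (this is exactly the degreewise statement underlying Lemma \ref{support lemma 1}), so $B\in\cat U_\phi$ and (a2) holds. For a set $\{E_\alpha\}\subset\cat U_\phi$, since cohomology commutes with direct sums, $\Supp(H^i(\oplus_\alpha E_\alpha))=\bigcup_\alpha\Supp(H^i(E_\alpha))\subset\phi(i)$ (the degreewise statement inside Lemma \ref{support lemma 3}), so $\oplus_\alpha E_\alpha\in\cat U_\phi$. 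Thus $\cat U_\phi$ is a cocomplete preaisle.

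\textbf{Tensor compatibility.} It remains to check $\mathbf{D}_{qc}^{\leq 0}(X)\otimes^{L}_{\CO_X}\cat U_\phi\subset\cat U_\phi$. By Lemma \ref{tensor test}, it suffices to show that for every flat $\CO_X$-module $F$ and every $G\in\cat U_\phi$ one has $F\otimes^{L}_{\CO_X}G\in\cat U_\phi$. Flatness gives $H^i(F\otimes_{\CO_X}G)\cong F\otimes_{\CO_X}H^i(G)$, whence $\Supp(H^i(F\otimes^{L}_{\CO_X}G))\subset\Supp(H^i(G))\subset\phi(i)$ --- this is the $n=0$ case inside the proof of Lemma \ref{support lemma 4}. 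Therefore $F\otimes^{L}_{\CO_X}G\in\cat U_\phi$, and by Lemma \ref{tensor test} the preaisle $\cat U_\phi$ is a $\otimes$-preaisle. Combining the four verifications, $\cat U_\phi$ is a cocomplete $\otimes$-preaisle of $\derive X$.

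\textbf{Main obstacle.} Honestly there is no serious obstacle: the statement is a bookkeeping exercise once the support lemmas and Lemma \ref{tensor test} are in hand. The only point requiring a little care is making sure one uses the \emph{degreewise} support inclusions rather than the $\Supph^{\geq i}$ versions --- the latter are tailored for later arguments (e.g. the boundedness/extension arguments in the $K$-theory section), whereas here the defining condition of $\cat U_\phi$ is the degreewise one $\Supp(H^i(E))\subset\phi(i)$, and the monotonicity of $\phi$ is exactly what reconciles shifts with this condition. One could also invoke Lemma \ref{support lemma 4} directly for the tensor step, but citing its $n=0$ internal computation is cleaner since we need the pointwise conclusion.
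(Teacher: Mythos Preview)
Your proof is correct and follows essentially the same route as the paper: closure under shifts, extensions, coproducts, and then the tensor condition via Lemma~\ref{tensor test} reduced to flat $\CO_X$-modules and the $n=0$ case of Lemma~\ref{support lemma 4}. The only cosmetic difference is that the paper first rewrites the membership condition as $E\in\cat U_\phi \Leftrightarrow \Supph^{\geq i}(E)\subset\phi(i)$ for all $i$ (using that $\phi$ is decreasing) and then cites Lemmas~\ref{support lemma 1}--\ref{support lemma 4} verbatim, whereas you unwind those lemmas to their degreewise content; the arguments are identical in substance.
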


\begin{proof}
	Since $\phi$ is a decreasing function $E \in \cat U_{\phi}$ if and only if $\Supph ^{\geq i} (E) 
	 \subset \phi(i)$ for each $i \in \Z$.  By Lemma \ref{support lemma 2} if $E \in \cat U_{\phi}$, 
	 then $E[1] \in \cat U_{\phi}$.  Using Lemma 
	 \ref{support lemma 1} we can observe that $\cat U_{\phi}$ is closed under extensions,  and 
	 Lemma \ref{support lemma 3} implies $\cat U_{\phi}$ is cocomplete.  Now to show 
	 $\cat U _{\phi}$ is a \tensor preaisle, by  Lemma \ref{tensor test}, it is
	 enough to show for any flat $\CO _X$-module $F$ and any
	$B \in \cat U _{\phi}$ we must have $F \otimes ^{L} _{\CO _X} B \in \cat U_{\phi}$.  This
	follows from $n = 0$ case of Lemma \ref{support lemma 4}.
\end{proof}

\begin{lemma}
\label{support lemma 5}
	Let $B$ $\in \derive X$, and $E$ be a perfect complex in $\mathbf{D}_{qc} ^{\leq 0}(X)$.  
	Then,  \[ \Supph^{\geq i} ( E \otimes^{\textbf{L}}_{\cat O_X} B) \subset \Supph^{\geq i} (B).\]
\end{lemma}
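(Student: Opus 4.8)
The plan is to reduce the statement about a perfect complex $E$ to the already-established flat-module case, Lemma \ref{support lemma 4}, by a local-to-global and induction-on-amplitude argument. First I would recall that the conclusion $\Supph^{\geq i}(E \otimes^{\mathbf L}_{\CO_X} B) \subset \Supph^{\geq i}(B)$ is local on $X$: both sides are computed stalkwise (or on an affine cover), and a perfect complex restricts to a perfect complex on each open, so it suffices to prove the inclusion after restricting to each member of an affine open cover $\{U_\alpha\}$ of $X$. Hence we may assume $X = \spec R$ is affine, in which case a perfect complex $E$ is (quasi-isomorphic to) a bounded complex of finitely generated projective $R$-modules.

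On an affine scheme, since $E$ lies in $\mathbf{D}_{qc}^{\leq 0}(X)$, I would choose a representative that is a bounded complex of finitely generated projectives concentrated in non-positive degrees, say $E = (\cdots \to E^{-1} \to E^0 \to 0)$ with each $E^{j}$ flat (indeed projective). The key step is then an induction on the length (number of nonzero terms) of this complex, using the brutal truncations $\sigma^{\geq -n}E$ exactly as in the proof of Lemma \ref{tensor test}. The base case is a single flat module placed in some degree $-n \leq 0$, i.e. $E^{-n}[n]$, which is precisely the content of Lemma \ref{support lemma 4} (with $n \geq 0$). For the inductive step, the distinguished triangle
\[ \sigma^{\geq -(n-1)}E \otimes^{\mathbf L}_{\CO_X} B \to \sigma^{\geq -n}E \otimes^{\mathbf L}_{\CO_X} B \to E^{-n}[n] \otimes^{\mathbf L}_{\CO_X} B \to (\sigma^{\geq -(n-1)}E \otimes^{\mathbf L}_{\CO_X}B)[1] \]
combined with Lemma \ref{support lemma 1} gives
\[ \Supph^{\geq i}(\sigma^{\geq -n}E \otimes^{\mathbf L}_{\CO_X} B) \subset \Supph^{\geq i}(\sigma^{\geq -(n-1)}E \otimes^{\mathbf L}_{\CO_X} B) \cup \Supph^{\geq i}(E^{-n}[n] \otimes^{\mathbf L}_{\CO_X} B), \]
and both terms on the right are contained in $\Supph^{\geq i}(B)$ by the induction hypothesis and Lemma \ref{support lemma 4} respectively.

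The main obstacle I anticipate is bookkeeping with the localization rather than anything deep: one must make sure that the cohomological supports behave correctly under restriction to the affine cover (so that the inclusion on $X$ follows from the inclusions on the $U_\alpha$), and that a K-flat (here, finite projective) representative in non-positive degrees exists, so that the brutal truncations $\sigma^{\geq -n}E$ are again in $\mathbf{D}_{qc}^{\leq 0}(X)$ and the triangle above is the honest stupid-truncation triangle. Once the affine reduction is in place, the argument is essentially identical in structure to the induction already carried out in the proof of Lemma \ref{tensor test}, with Lemma \ref{support lemma 4} supplying the step that there involved hypothesis (4).
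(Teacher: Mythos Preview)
Your proof is correct and follows essentially the same route as the paper: brutal truncations of $E$, the resulting distinguished triangle tensored with $B$, Lemma \ref{support lemma 1} for the inductive step, and Lemma \ref{support lemma 4} for the base case. The only difference is that the paper does not pass to an affine cover: it works directly on $X$, observing that since $X$ is quasi-compact one has $\sigma^{\geq -N}E = E$ for some $N$, and that the components $E^{-n}$ of (a strict perfect representative of) $E$ are already flat $\CO_X$-modules globally, so your localization step is unnecessary.
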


\begin{proof} 
	
 First, we observe that the brutal truncation of a perfect complex is perfect.  For each positive 
 integer $n$, we have the triangle coming from brutal 
truncation,
\[ \sigma^{\geq -(n -1)} E \rar \sigma ^{\geq -n} E \rar E^{-n}[n] \rar \sigma^{\geq -(n-1)} E[1].\]
The $\cat O _X$-module $E^{-n}$ is flat.  Tensoring with $B$ we get the triangle,
\[ \sigma^{\geq -(n -1)} E \otimes^{\textbf{L}}_{\CO_X} B  \rar \sigma ^{\geq -n} E 
 \otimes^{\textbf{L}}_{\CO_X} B\rar E^{-n}[n]  \otimes^{\textbf{L}}_{\CO_X} B \rar 
 \sigma^{\geq -(n-1)} E[1] \otimes^{\textbf{L}}_{\CO_X} B.\]

As $E$ is perfect and $X$ is quasi-compact there is an $N$ such that 
$ \sigma ^{\geq -N} E = E$.
In view of Lemma \ref{support lemma 1} and by induction on the length of $E$,  it is now 
enough to show for any flat $\CO _X$-module $F$ and positive integer $n$,
$\Supph ^{\geq i} ( F [n] \otimes^{\textbf{L}}_{\CO_X} B) \subset \Supph ^{\geq i}(B)$;
which has been shown in \ref{support lemma 4}.  This proves the claim.

\end{proof}

\subsection{Extending perfect complexes along a tensor preaisle}
\label{Ext}

Let $X$ be a quasi-compact quasi-separated scheme with an ample family of line bundles.  Let 
$U \subset X $ be a quasi-compact open subset and $Z$ be a closed subset of $X$ with 
$X \setminus Z$ quasi-compact.

The following proposition is an improvement of  \cite [Proposition 5.4.2]{TT90},  we remove 
both the Noetherian hypothesis and the boundedness assumption on $F$.  
Also we observe that the map can be extended along a \tensor preaisle.

\begin{proposition}
\label{extension of morphism}
	
	Let $E \in \perfect X$ and $F \in \derive X$ and $a : E|_U \rar F|_U$ be a map in $\derive U$.  
	Then, there exists $\prim E \in \perfect X$,  and maps $c : \prim E \rar E$ and
	$b : \prim E \rar F$ such that  $c|_U : \prim E|_U \rar E|_U$ is an isomorphism
	 and $ a \circ c|_U = b|_U$.   Moreover, 
	
	If \cat U is a \tensor preaisle and $E \in \cat U$, then $\prim E$ can be chosen such that  $
	\prim E \in \cat U$.

\end{proposition}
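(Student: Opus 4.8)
The plan is to build $\prim E$ in two stages, first producing a perfect complex mapping to $E$ that agrees with $E$ over $U$ and carries the data of $a$, then upgrading it so it lands in $\cat U$. For the first stage I would follow the Thomason--Trobaugh machinery: form the mapping cone of $a$ in $\derive U$, extend it (as an object of $\derive X$ supported away from $U$ plus a Koszul-type patching, using the ample family of line bundles to approximate by strictly perfect complexes on affines and glue) to obtain a map $E \rar G$ in $\derive X$ whose restriction to $U$ recovers $a$ up to the cone; then take $\prim E$ to be a perfect complex fitting in a triangle $\prim E \rar E \rar G' \rar \prim E[1]$ where $G'$ is a perfect complex on $X$ whose restriction to $U$ vanishes (so $c|_U$ is an isomorphism) and whose composite with the map to $F$ encodes $a\circ c|_U = b|_U$. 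Concretely this is the argument of \cite[Proposition 5.4.2]{TT90} but run without the Noetherian or boundedness hypotheses; the input one needs is the unbounded Neeman--Thomason approximation of $\derive X$ by perfect complexes and the ability to extend a perfect complex on $U$ to a perfect complex on $X$ (available because $X$ has an ample family of line bundles), together with \cite[Lemma 5.6.2]{TT90}-type $K_0$ obstruction being killed after a finite coproduct of shifts of line bundles. I would isolate the removal of the boundedness hypothesis on $F$ as a lemma: replace $F$ by a suitable truncation or filtered colimit so that $a$ factors through a bounded-above piece, do the construction there, and pass to the colimit, noting that $\prim E$ only sees finitely much of $F$ because $E$ is perfect hence compact.

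The second stage is the new \tensor-preaisle refinement. Having obtained some perfect $\prim E_0 \rar E$ and $\prim E_0 \rar F$ with $c|_U$ an isomorphism, I want to modify $\prim E_0$ inside its ``quasi-isomorphism-over-$U$'' class so that it additionally lies in $\cat U$. The point is that the difference $\prim E_0$ versus $E$ is measured by a perfect complex $G'$ with $G'|_U \cong 0$, i.e. $G'$ is supported on $Z = X\setminus U$; and I need to absorb $G'$, or rather replace it, by something that together with $E\in\cat U$ keeps the extension in $\cat U$. The cleanest route: arrange that the perfect complex $G'$ we subtract off is itself built as an iterated extension of shifts $L[n]$ of line bundles with $n\le 0$ — because line bundles are flat, $L[n]\otim^{L}_{\CO_X}(-)$ preserves $\cat U$ for $n\le 0$ by Lemma \ref{support lemma 4} (the $n=0$ case), and more relevantly a shift $L[n]$ with $n\ge 0$ of a line bundle lies in $\mathbf{D}_{qc}^{\le 0}(X)$, so $L[n]\in\aisle T$ and hence $L[n]\otimes^{L}_{\CO_X}E\in\cat U$ when $E\in\cat U$ since $\cat U$ is a \tensor preaisle. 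So I would prove: any perfect complex on $X$ supported on $Z$ is, after adding a perfect complex extendable from zero on $U$, an iterated extension of objects $L_i\otimes^{L}_{\CO_X}E$ with $L_i$ a shift of a line bundle in the relevant range — or, more directly, rerun the extension construction so that the correcting perfect complex $G'$ is a finite extension of non-positive shifts of line bundles; then $G'\in\aisle T$-tensor-$E$-type terms, the triangle $\prim E\rar E\rar G'$ forces $\prim E\in\cat U$ by closure of $\cat U$ under extensions and positive shifts. This is exactly the flavour of the \tensor-aisle arguments already used in Lemma \ref{tensor generator 1} and Proposition \ref{keller tensor}.

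More precisely, I expect the actual mechanism to be: in the Thomason--Trobaugh proof the gap between $\prim E_0$ and a complex lying in $\cat U$ is controlled by finitely many cells, each of which can be taken to be $L\otimes^{L}_{\CO_X}\CO_X[n] = L[n]$ for $L$ in the ample family and $n$ in a range we may push to be $\ge 0$ by replacing $L$ by a sufficiently negative twist $L^{\otimes m}$ (ampleness lets us do this while staying perfect and while keeping the restriction to $U$ controlled). Since $L[n]\in\mathbf{D}_{qc}^{\le 0}(X)=\aisle T$ for $n\ge 0$ and $E\in\cat U$, we have $L[n]\otimes^{L}_{\CO_X}E\in\cat U$; assembling the finite extension shows $\prim E\in\cat U$, while the restriction to $U$ is unaffected so $c|_U$ remains an isomorphism and $a\circ c|_U = b|_U$ is preserved. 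I would state the needed twisting bound as a small lemma citing the ample-family hypothesis and the quasi-compactness of $U$ and of $X\setminus Z$.

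The main obstacle I anticipate is the second stage: controlling the \emph{degrees} of the correcting cells so that they land in $\aisle T$ (or are tensor products of $\aisle T$-objects with $E$) rather than merely being flat summands. In the stable (\tensor-ideal) world of \cite{TT90} one does not care about degrees at all, only about the union of supports, so this boundedness-by-degree bookkeeping is genuinely new — it is precisely the ``component-wise'' phenomenon flagged in the introduction. Concretely the risk is that extending a perfect complex from $U$ introduces cells in arbitrarily high (positive) cohomological degree that are not shifts into $\aisle T$; the fix is to use ampleness to renormalize each cell to a non-positive shift of a sufficiently negative line-bundle twist, at the cost of enlarging $\prim E$ by a finite number of extra cells, and to check that this renormalization is compatible with the simultaneous constraints ``restricts to an isomorphism on $U$'' and ``composes correctly to $F$''. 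I would carry this out by first proving the degree-controlled extension statement with no reference to $a$ or $F$ (an improvement of the construction behind \cite[Lemma 5.6.2]{TT90} and \cite[Proposition 5.4.2]{TT90}), then feeding it into the cone construction of stage one; once the cells are in $\aisle T$, closure of the \tensor preaisle $\cat U$ under positive shifts, extensions, coproducts, and tensoring by $\aisle T$ does the rest.
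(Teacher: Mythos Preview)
Your proposal is considerably more elaborate than what is needed, and while the underlying ideas are not wrong, you have missed two simplifications that make the paper's proof essentially a two-line argument.

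For the first stage (removing the Noetherian and boundedness hypotheses on $F$), you propose truncations and filtered colimits together with compactness of $E$. The paper instead observes that the only place \cite[Proposition 5.4.2]{TT90} uses those hypotheses is to guarantee that the morphism $E|_U \to F|_U$ can be represented by a strict map of complexes. But this is automatic as soon as $F|_U$ is K-injective; and since $j^*$ has an exact left adjoint, it preserves K-injectives, so one simply replaces $F$ by a K-injective resolution in $\derive X$. No truncation, no colimit argument, no appeal to compactness of $E$.

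For the second stage (arranging $\prim E \in \cat U$), your plan is to analyse the ``correcting cells'' in the Thomason--Trobaugh construction, push their degrees into the non-positive range by twisting with the ample family, and then argue by closure of $\cat U$ under extensions that $\prim E$, built from terms of the form $L[n]\otimes^{L}_{\CO_X} E$, lies in $\cat U$. This is the right shape, but it is unnecessary to carry out any of that bookkeeping: if one reads the proof of \cite[Proposition 5.4.2]{TT90}, the complex $\prim E$ produced there is \emph{literally} of the form $K^{+}\otimes^{L}_{\CO_X} E$, where $K^{+}$ is a fixed perfect complex concentrated in non-positive degrees (a Koszul-type complex that is quasi-isomorphic to $\CO_U$ after restriction to $U$). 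Hence $K^{+}\in \mathbf{D}_{qc}^{\leq 0}(X)$, and $\prim E = K^{+}\otimes^{L}_{\CO_X} E \in \cat U$ immediately from the definition of a \tensor preaisle. No cell-by-cell analysis, no twisting, no degree renormalisation is required; the obstacle you anticipated (``controlling the degrees of the correcting cells'') simply does not arise, because the original construction already delivers the answer in the form $(\text{object of }\aisle T)\otimes E$.

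In summary: your two stages are each replaced by a single observation --- K-injective resolutions handle the strictness issue, and the explicit shape $\prim E = K^{+}\otimes E$ with $K^{+}\in\aisle T$ handles the \tensor preaisle membership. Your route would likely succeed, but it reinvents machinery that is already sitting inside the Thomason--Trobaugh proof.
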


\begin{proof}
	First, we note that in the proof of  \cite [Proposition 5.4.2]{TT90} the Noetherian assumption 
	on $X$ and bounded below assumption on $F$ are being used to make sure the map 
	between $E|_U \rar F|_U$ is a strict map of complexes.  Once the map $E|_U \rar F|_U$ is 
	a strict map then by the second half of the proof of  \cite [Proposition 5.4.2]{TT90},
 	this improved version follows verbatim.
	
	To have a strict map of complexes it is enough if $F|_U$ is K-injective.  Let $j : U \rar X $ be 
	the open immersion.  The functor $j^*$ takes K-injectives to K-injectives as it
	has an exact left adjoint.  For any $F$ in \derive X,  we can replace $F$ by a K-injective 
	resolution.  
	Hence without of loss of generality, we can assume $F$ is K-injective and therefore 
	$j^*F$ is K-injective.
	
	From the proof of Proposition 5.4.2 \cite{TT90}, one can take 
	$\prim E =  K^+ \otimes^{\textbf{L}}_{\cat O_X} E$ 
	where $K^+$ is a perfect complex in $\mathbf{D}_{qc} ^{\leq 0}(X)$.  The last part of 
	the claim follows since $\cat U$ is a \tensor preaisle.  
	
\end{proof}

We use the shorthand notation \perfn X for \perfect X $\cap$  $\mathbf{D}_{qc} ^{\leq N}(X)$.   
We say a complex $A$ in \perfn U has an extension in \perfn X if there is an object $\prim A \in 
\perfn X$ such that $\prim A|_U \cong A$.  

\begin{lemma}
\label{extension lemma 1}
	 Let $A \rar B \rar C \rar A[1]$ be a distinguished triangle in \perfn U. If $A$ and 
	$B$ have extensions in \perfn X then $C$ has an extension  in \perfn X.
\end{lemma}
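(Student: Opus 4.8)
The plan is to lift the map $A \rar B$ to a morphism between suitable extensions of $A$ and $B$, and then take its cone. Write $f \colon A \rar B$ for the first map of the given triangle, so that $C$ is a cone of $f$ in $\perfect U$. Let $\prim A, \prim B \in \perfn X$ be the given extensions, fix isomorphisms $\prim A|_U \cong A$ and $\prim B|_U \cong B$, and transport $f$ to a map $a \colon \prim A|_U \rar \prim B|_U$ in $\derive U$.

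First I would feed $\prim A \in \perfect X$, $\prim B \in \derive X$, and $a$ into Proposition \ref{extension of morphism}. This yields a perfect complex $\prim{\prim A} \in \perfect X$ together with maps $c \colon \prim{\prim A} \rar \prim A$ and $b \colon \prim{\prim A} \rar \prim B$ such that $c|_U$ is an isomorphism and $b|_U = a \circ c|_U$. In particular $\prim{\prim A}|_U \cong \prim A|_U \cong A$, so $\prim{\prim A}$ is again an extension of $A$, and under these identifications $b|_U$ corresponds to $f$. The point is that the proof of Proposition \ref{extension of morphism} produces $\prim{\prim A}$ of the form $K^+ \otimes^{\textbf{L}}_{\CO_X} \prim A$ with $K^+$ a perfect complex in $\mathbf{D}_{qc}^{\leq 0}(X)$; since $\prim A \in \mathbf{D}_{qc}^{\leq N}(X) = \mathbf{D}_{qc}^{\leq 0}(X)[N]$ and $\mathbf{D}_{qc}^{\leq 0}(X)$ is closed under $\otimes^{\textbf{L}}_{\CO_X}$, we get $\prim{\prim A} \in \mathbf{D}_{qc}^{\leq N}(X)$, hence $\prim{\prim A} \in \perfn X$.

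Next I would complete $b$ to a distinguished triangle $\prim{\prim A} \xrightarrow{\,b\,} \prim B \rar \prim C \rar \prim{\prim A}[1]$ in $\derive X$. As the cone of a map of perfect complexes, $\prim C$ is perfect. For the amplitude bound, the long exact cohomology sequence gives, for each $n > N$, that $H^n(\prim B) = 0$ and $H^{n+1}(\prim{\prim A}) = 0$, whence $H^n(\prim C) = 0$; thus $\prim C \in \perfn X$. Finally, restricting the triangle along the open immersion $U \rar X$ (an exact operation) and using that $b|_U$ is identified with $f$, the object $\prim C|_U$ is a cone of $f \colon A \rar B$ and is therefore isomorphic to $C$ — non-canonically, but any two cones of a fixed morphism are isomorphic, which is all that is needed. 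Hence $\prim C$ is the desired extension of $C$ in $\perfn X$.

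The \emph{main obstacle} is keeping the cohomological amplitude under control. Passing to $U$ is harmless, but the application of Proposition \ref{extension of morphism} a priori only returns some perfect complex $\prim{\prim A}$, about whose cohomological degrees nothing is guaranteed in general, and one cannot simply truncate it since $\tau^{\leq N}$ destroys perfectness. This is precisely why Proposition \ref{extension of morphism} was recorded with the extra information that $\prim E = K^+ \otimes^{\textbf{L}}_{\CO_X} E$ with $K^+$ in $\mathbf{D}_{qc}^{\leq 0}(X)$; with that in hand, the remaining steps are just the long exact sequence and the routine uniqueness of cones.
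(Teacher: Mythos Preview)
Your proof is correct and follows essentially the same route as the paper: lift the map via Proposition~\ref{extension of morphism} and take the cone. The only cosmetic difference is that the paper invokes the ``moreover'' clause of Proposition~\ref{extension of morphism} directly, observing that $\mathbf{D}_{qc}^{\leq N}(X)$ is a \tensor preaisle, whereas you unpack the explicit form $K^+ \otimes^{\textbf{L}}_{\CO_X} \prim A$ to reach the same conclusion.
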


\begin{proof}
	Suppose we have $\prim A$ and $\prim B$ in $\perfn X$ such that 
	$\prim A|_U \cong A$ and $\prim B|_U \cong B$.  
	Note that  $\mathbf{D}_{qc} ^{\leq N}(X)$ is a \tensor preaisle.
	Using Proposition \ref{extension of morphism}, if required replacing $\prim A$, the map 
	$A\rar B$ can be extended to a map $\prim A \rar \prim B$ in $\perfn X$.  The cone of 
	the map $\prim A \rar \prim B$ is an extension of $C$ in $\perfn X$.
\end{proof}

The following lemma is a modification of \cite[Lemma 5.5.1]{TT90}.  To illustrate the 
difference we write the proof in a similar fashion as in \cite{TT90}.

\begin{lemma}
\label{extension key lemma}
	For any $F$ in \perfn U there exists a perfect complex $\bar{ F}$ in \perfn U 
	such that $F \oplus \bar F$ has an extension in \perfn X.
\end{lemma}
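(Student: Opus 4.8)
The plan is to follow the inductive strategy of \cite[Lemma 5.5.1]{TT90}, but to carry the cohomological bound $\mathbf{D}_{qc}^{\leq N}$ through every construction. The first step is a reduction. Since $U$ is open in $X$ it inherits an ample family of line bundles, so $F$ is isomorphic in $\derive U$ to a strict perfect complex $E = [E^a \to \cdots \to E^b]$ of vector bundles. Whenever $b > N$, the vanishing $H^b(F) = 0$ makes $E^{b-1} \to E^b$ an epimorphism of vector bundles; its kernel $K$ is again a vector bundle, and $E$ is quasi-isomorphic to $[E^a \to \cdots \to E^{b-2} \to K]$, whose top degree is $b-1$. Iterating, I may assume $F$ is a strict perfect complex concentrated in degrees $[a,N]$ (or is zero, which is trivial), and then induct on the number of nonzero terms.

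For the base case $F = V[-j]$ with $V$ a vector bundle on $U$ and $j \leq N$, I would use the ample family on $U$ to choose a resolution $\cdots \to \mathcal{E}_1|_U \to \mathcal{E}_0|_U \to V \to 0$ in which every $\mathcal{E}_i$ is a finite direct sum of negative powers of the ample line bundles of $X$; then each $\mathcal{E}_i$ is defined on $X$ and lies in $\mathbf{D}_{qc}^{\leq 0}(X)$, and every syzygy of the resolution is a vector bundle. As $U$ is quasi-compact and quasi-separated it has finite cohomological dimension, say $H^i(U,-) = 0$ on quasi-coherent sheaves for $i > n$; fixing $d > n$, the brutal truncation $Q_d = [\mathcal{E}_d|_U \to \cdots \to \mathcal{E}_0|_U]$ in degrees $[-d,0]$ has cohomology $V$ in degree $0$ and a vector bundle $S := \ker(\mathcal{E}_d|_U \to \mathcal{E}_{d-1}|_U)$ in degree $-d$, and its connecting class lies in $\mathrm{Ext}^{d+1}_U(V,S) = H^{d+1}(U, V^\vee \otimes S) = 0$, so $Q_d \cong V \oplus S[d]$. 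I would then lift $Q_d$ to $X$ by lifting the truncations $[\mathcal{E}_k|_U \to \cdots \to \mathcal{E}_0|_U]$ inductively in $k$: each such truncation is the fibre of a morphism from the previously lifted one to $\mathcal{E}_k|_U[k+1]$, which Proposition \ref{extension of morphism} extends over $X$; its tensor-preaisle refinement applied to the $\otimes$-preaisle $\mathbf{D}_{qc}^{\leq 0}(X)$ keeps the source in $\mathbf{D}_{qc}^{\leq 0}(X)$, and since $\mathcal{E}_k[k] \in \mathbf{D}_{qc}^{\leq -k}(X)$ the fibre stays there too. Shifting the resulting $\widetilde{Q}_d \in \perfect X \cap \mathbf{D}_{qc}^{\leq 0}(X)$ by $[-j]$ produces an object of $\perfect X \cap \mathbf{D}_{qc}^{\leq j}(X) \subseteq \perfn X$ whose restriction to $U$ is $V[-j] \oplus S[d-j]$, so $\bar{F} := S[d-j] \in \perfn U$ settles the base case.

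For the inductive step I would use the brutal-truncation triangle $\sigma^{\geq a+1}F \to F \to F^a[-a] \to \sigma^{\geq a+1}F[1]$, whose first and third terms lie in $\perfn U$ and have strictly shorter models. By induction there are $\bar{F}_1, \bar{F}_2 \in \perfn U$ and extensions $G_1, G_2 \in \perfn X$ of $\sigma^{\geq a+1}F \oplus \bar{F}_1$ and $F^a[-a] \oplus \bar{F}_2$; adding split triangles on $\bar{F}_1$ and $\bar{F}_2$ converts the truncation triangle into a triangle in $\perfn U$ whose first and third terms are $j^*G_1$ and $j^*G_2$, and the variant of Lemma \ref{extension lemma 1} in which one extends the connecting morphism through Proposition \ref{extension of morphism} — using the tensor-preaisle clause to keep the extended source in $\perfn X$, and the long exact cohomology sequence to force the fibre's cohomology to vanish above degree $N$ — shows that the middle term $F \oplus \bar{F}_1 \oplus \bar{F}_2$ extends to $\perfn X$. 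Thus $\bar{F} := \bar{F}_1 \oplus \bar{F}_2$ works. The essential difficulty is exactly this degree bookkeeping: extendibility up to a direct summand is \cite[Lemma 5.5.1]{TT90}, and the new content is that none of the truncations, the step-by-step lifting, or the fibres ever raises cohomology above degree $N$ — which is what the tensor-preaisle refinement of Proposition \ref{extension of morphism} and the finite cohomological dimension of $U$ are designed to ensure.
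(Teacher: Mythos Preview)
Your argument is essentially correct, but it takes a markedly different and much longer route than the paper. The paper does not induct on the length of a strict perfect model at all: it simply forms $\tau^{\leq N}\mathbf{R}j_*F$, which is cohomologically bounded below with quasi-coherent cohomology and lies in $\mathbf{D}_{qc}^{\leq N}(X)$, and then invokes \cite[Corollary 2.3.3]{TT90} to write it as a filtered colimit $\varinjlim E_\alpha$ of strict perfect complexes with $E_\alpha \in \perfn X$. Restricting to $U$ gives $\varinjlim j^*E_\alpha \cong j^*\tau^{\leq N}\mathbf{R}j_*F \cong \tau^{\leq N}F \cong F$, and compactness of $F$ splits off $F$ as a summand of some $j^*E_\alpha$; the complement $\bar F$ lies in $\perfn U$ because $\perfn U$ is a thick preaisle. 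The entire degree bookkeeping is absorbed into the single truncation $\tau^{\leq N}$ applied once at the start.

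Your approach, by contrast, carries the bound through an explicit inductive construction: the base case for a single vector bundle relies on resolving by restrictions of global line bundles, invoking finite cohomological dimension of $U$ to split a brutal truncation, and then lifting the truncated resolution term by term via Proposition~\ref{extension of morphism}; the inductive step needs the variant of Lemma~\ref{extension lemma 1} that extends the connecting morphism rather than the first map. All of this works, and it has the virtue of being entirely explicit, but it is considerably heavier than the paper's three-line argument and depends on the auxiliary input of finite cohomological dimension, which the paper's proof does not need. The paper's method also makes transparent why the bound $N$ is preserved: it is literally imposed by a single $\tau^{\leq N}$, whereas in your argument one must verify at every stage that no construction pushes cohomology above $N$.
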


\begin{proof}
	Let $j : U \rar X$ be the open immersion.  Consider ${\bf{R}}j_*F$ on $X$.  This is 
	cohomologically bounded below with quasi-coherent cohomology.  The same is true for
	$\tau ^{\leq N} {\bf{R}}j_*F$,  so we can apply \cite[Corollary 2.3.3]{TT90}, which says, there
	is a directed system of strict perfect complexes $\{E_{\alpha}\}$ bounded above by $N$ 
	such 
	that  \[ \varinjlim   E_{\alpha} \cong \tau ^{\leq N} {\bf{R}}j_*F.\]
	
	Applying the exact functor $j^*$ we get,
	\begin{align*}
	\varinjlim j^*E_{\alpha} 
	& \cong j^*\tau ^{\leq N} {\bf{R}}j_*F\\
	& \cong \tau ^{\leq N} j^*{\bf{R}}j_*F\\
	& \cong  \tau ^{\leq N} F\\
	& \cong F.  \\
	\end{align*}
	
	As $F$ is perfect the isomorphism $F \rar \varinjlim j^*E_{\alpha}$ must factor 
	through 
	some $j^* E_{\alpha}$.  Since every monomorphism splits in a triangulated category,  there is 
	an object $\bar F$ such that $F \oplus \bar F \cong j^*E_{\alpha}$.  As $\perfn U$ is 
	a thick preaisle $\bar F \in \perfn U$ and $E_{\alpha}$ is an extension of 
	$F \oplus \bar F$ in $\perfn X$. 
\end{proof}

\begin{lemma} 
\label{extension unrestricted}
	For any $F$ in \perfn U the perfect complex $F \oplus F[1]$ has an extension in \perfn 
	X.
\end{lemma}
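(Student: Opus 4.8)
The plan is to derive this from Lemma~\ref{extension key lemma} and Lemma~\ref{extension lemma 1}, by exhibiting $F \oplus F[1]$ as the cone of a split idempotent endomorphism of a complex we already know extends; this mirrors the classical argument in the stable case.

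First I would apply Lemma~\ref{extension key lemma} to $F$, obtaining a perfect complex $\bar F \in \perfn U$ together with an extension $G \in \perfn X$ of $F \oplus \bar F$, so that $G|_U \cong F \oplus \bar F$. Next, let $q \colon F \oplus \bar F \rar F \oplus \bar F$ be the idempotent projecting onto the summand $\bar F$ (with kernel $F$); as a morphism of direct sums it equals $0_F \oplus \mathrm{id}_{\bar F}$, so its cone decomposes as
\[ \mathrm{cone}(q) \;\cong\; \mathrm{cone}(0_F) \oplus \mathrm{cone}(\mathrm{id}_{\bar F}) \;\cong\; (F \oplus F[1]) \oplus 0 \;\cong\; F \oplus F[1]. \]
Hence there is a distinguished triangle $F \oplus \bar F \overset{q}{\rar} F \oplus \bar F \rar F \oplus F[1] \rar (F \oplus \bar F)[1]$ in \derive U. Since $F \in \perfn U$ makes $F[1]$ perfect and puts it in $\mathbf{D}_{qc}^{\leq N-1}(U) \subseteq \mathbf{D}_{qc}^{\leq N}(U)$, the complex $F \oplus F[1]$ lies in \perfn U, and likewise $(F \oplus \bar F)[1] \in \perfn U$, so this is a distinguished triangle in \perfn U. Its first two terms are both $F \oplus \bar F$, which admits the extension $G$ in \perfn X, so Lemma~\ref{extension lemma 1} immediately yields an extension of $F \oplus F[1]$ in \perfn X, as desired.

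I do not expect a serious obstacle here. The closest thing to a subtle step is choosing the correct idempotent $q$ and recording the cone identity $\mathrm{cone}(q) \cong F \oplus F[1]$ — this is precisely what lets us bypass the false hope that $F$ itself extends — together with the bookkeeping that $F[1]$, and hence $F \oplus F[1]$ and $(F \oplus \bar F)[1]$, remain in the bounded subcategory \perfn U so that Lemma~\ref{extension lemma 1} applies verbatim. Both are routine.
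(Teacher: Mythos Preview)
Your proof is correct and is essentially the paper's argument: the paper obtains the same triangle $F\oplus\bar F \to F\oplus\bar F \to F\oplus F[1]$ by taking the direct sum of the three triangles $0\to F\to F$, $\bar F\to\bar F\to 0$, and $F\to 0\to F[1]$, which after discarding the zero summands is exactly the cone of your idempotent $q=0_F\oplus\mathrm{id}_{\bar F}$. Your phrasing via the idempotent is a bit tidier, but the content is identical.
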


\begin{proof}
	By Lemma \ref{extension key lemma}, for a given $F$ we have $\bar F$ such that $F \oplus 
	\bar F$ has an extension in \perfn X.  Consider the direct sum of the following 
	distinguished triangles
	\[0 \rar F \rar F \rar 0[1],\]
	\[\bar F \rar \bar F \rar 0 \rar \bar F[1],  \]
	\[F \rar 0 \rar F[1] \rar F[1].\]
	
	By Lemma \ref{extension lemma 1},  $F \oplus F[1]$ has an extension in \perfn X.
\end{proof}

Next, we prove a version of the extension lemma with support conditions.  We denote the 
collection of perfect complexes $E \in \perfn X$ with $\Supph (E) \subset Z$ by 
$\mathrm{Perf} ^{\leq N} _{Z} (X)$. 

\begin{proposition}
\label{extension restricted}
	For any $F$ in $\mathrm{Perf} ^{\leq N} _{Z\cap U} (U)$ the perfect complex 
	$F \oplus F[1]$ has an extension in $\mathrm{Perf} ^{\leq N} _{Z} (X)$. 
\end{proposition}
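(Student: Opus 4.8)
The plan is to deduce this from the unrestricted extension result, Lemma~\ref{extension unrestricted}, after first enlarging $U$. Put $V := X\setminus Z$, which is quasi compact by hypothesis, and set $U' := U\cup V$; being a finite union of quasi compact opens, $U'$ is a quasi compact open of $X$. Let $j' : U\rar U'$ denote the open immersion. Since $U$ and $U'$ are quasi compact and quasi separated, $j'$ is a quasi compact quasi separated morphism, so $\mathbf{R}j'_*$ carries $\derive U$ into $\derive{U'}$ and commutes with flat base change.

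The first step is to extend $F$ "by zero" along $j'$. Because $\Supph(F)\subset Z\cap U$ and $U\cap V = U\setminus Z$, the restriction $F|_{U\cap V}$ is acyclic. Set $G := \mathbf{R}j'_*F \in \derive{U'}$. Then $G|_U\cong F$, since $j'$ is an open immersion; and flat base change along the open immersion $V\rar U'$ gives $G|_V\cong \mathbf{R}j''_*(F|_{U\cap V})\cong 0$, where $j'' : U\cap V\rar V$. As $\{U,V\}$ is an open cover of $U'$ on which $G$ restricts to the perfect complexes $F$ and $0$, both with cohomology concentrated in degrees $\leq N$, the complex $G$ is itself perfect and lies in $\mathbf{D}_{qc}^{\leq N}(U')$ (perfectness and the vanishing of cohomology above degree $N$ are local). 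Moreover the vanishing of $G|_V$ forces $\Supph(G)\subset U'\setminus V = Z\cap U$. Hence $G\in \mathrm{Perf}^{\leq N}_{Z\cap U}(U')$.

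Now apply Lemma~\ref{extension unrestricted} to the quasi compact open $U'$ of $X$ (recall $X$ carries an ample family of line bundles): there is $E\in\perfn X$ with $E|_{U'}\cong G\oplus G[1]$. Restricting to $V$ we get $E|_V\cong (G\oplus G[1])|_V\cong 0$, hence $\Supph(E)\subset X\setminus V = Z$, so $E\in \mathrm{Perf}^{\leq N}_{Z}(X)$; restricting to $U$ we get $E|_U\cong (G\oplus G[1])|_U\cong F\oplus F[1]$. Therefore $E$ is an extension of $F\oplus F[1]$ lying in $\mathrm{Perf}^{\leq N}_{Z}(X)$, as required.

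I expect the only step that is not pure bookkeeping to be the construction of $G$: one must know that $\mathbf{R}j'_*F$ is again a perfect complex of amplitude $\leq N$, which I would justify exactly as above from its restrictions along the cover $\{U,V\}$ of $U'$. This is really an excision statement — $\mathrm{Perf}^{\leq N}$ with cohomological support in the closed set $Z\cap U\subset U'$ depends only on the open neighbourhood $U$ — and once it is in hand, everything else amounts to checking that the relevant restrictions vanish and that perfectness, the degree bound, and the support condition pass through restriction and through the operation $(-)\oplus(-)[1]$.
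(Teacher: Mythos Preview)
Your proof is correct and is essentially the paper's own argument: enlarge $U$ to $U'=U\cup(X\setminus Z)$, extend $F$ by zero to a perfect complex on $U'$, and then apply Lemma~\ref{extension unrestricted} to the open subset $U'\subset X$. The only difference is cosmetic---the paper uses the extension-by-zero functor $k_!$ in place of your $\mathbf{R}j'_*$, but since $F|_{U\cap V}\cong 0$ the canonical map $j'_!F\to\mathbf{R}j'_*F$ is an isomorphism, so the two constructions agree.
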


\begin{proof}
	We denote  $U \cup (X \setminus Z)$ 
	by $W$.   Take the open immersion $k : U \rar W$.  Recall the functor $k_{!}$
	extending by zero is exact and a left adjoint of $k^*$.  We take $k_!F$,  which is a 
	perfect complex on $W$,  since locally it is perfect: $k_!F|_U = F$ and 
	$k_!F|_{(W \setminus Z)} \cong 0$.
	
	Now we apply the unrestricted extension Lemma \ref{extension unrestricted} to 
	$k_!F \oplus k_!F[1]$.  This shows $F \oplus F[1]$ has an extension with the 
	support condition.
\end{proof}

\subsection*{Without the divisorial condition}

Next, we proceed to remove the hypothesis of ample family of line bundles,  using 
homotopy pushout construction of \cite[3.20.4]{TT90}.

\begin{proposition}
\label{K + K}
	Let $X$ be a quasi-compact quasi-separated scheme.  Let $U \subset X $ be a quasi
	compact open subset and $Z$ be a closed subset of $X$ with $X \setminus Z$ quasi 
	compact.   For any $F$ in $\mathrm{Perf} ^{\leq N} _{Z\cap U} (U)$ there is a positive 
	integer $n$ such that the perfect complex 
	$\oplus _{i = 0} ^{n} (F[i]^{\oplus \binom {n} {i}})$ has an extension in 
	$\mathrm{Perf} ^{\leq N} _{Z} (X)$. 
\end{proposition}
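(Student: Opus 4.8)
The idea is to reduce to the divisorial case by a Zariski-descent/Mayer--Vietoris argument. Since $X$ is quasi-compact and quasi-separated, I can cover $X$ by finitely many affine opens $V_1, \dots, V_r$; each $V_i$ is trivially divisorial, and more usefully, all the results of the subsection ``Extending perfect complexes along a tensor preaisle'' apply to any divisorial quasi-compact quasi-separated scheme, in particular to each $V_i$ and to finite unions of the $V_i$ together with $U$. The plan is to induct on the number of affine opens needed to build up $X$ out of $U$. So set $W_0 = U$ and $W_m = U \cup V_1 \cup \dots \cup V_m$, so that $W_r = X$; at each stage $W_{m} = W_{m-1} \cup V_m$ with $W_{m-1}$, $V_m$, and $W_{m-1} \cap V_m$ all quasi-compact opens, and the intersection is \emph{separated} (in fact affine-by-affine, hence divisorial) because $X$ is quasi-separated. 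The inductive hypothesis is exactly the statement of the proposition with $X$ replaced by $W_{m-1}$: for any $G$ in $\mathrm{Perf}^{\leq N}_{Z \cap W_{m-1}}(W_{m-1})$ there is an $n$ with $\oplus_{i=0}^{n}(G[i]^{\oplus \binom{n}{i}})$ extendable to $\mathrm{Perf}^{\leq N}_{Z}(W_{m-1})$.

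For the inductive step over the gluing $W_m = W_{m-1} \cup V_m$, I follow the pattern of \cite[5.5.1, 5.5.3]{TT90} using the homotopy pushout construction of \cite[3.20.4]{TT90}: given $F$ on $W_m$ (here $F$ itself is already global since we are ultimately only asked about $F$ on $W_r = X$ coming from $U$ — so really I track a perfect complex on $W_m$ with support in $Z$ obtained from the previous stage), restrict it to $V_m$ and to $W_{m-1}$. By Proposition \ref{extension restricted} applied to the divisorial scheme $V_m$ (with open $V_m \cap W_{m-1}$), some $\oplus_{i=0}^{a}(F|[i]^{\oplus\binom{a}{i}})$ on $V_m \cap W_{m-1}$ extends to a perfect complex on $V_m$ with the right support. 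By the inductive hypothesis applied to $W_{m-1}$, some further $\oplus_{i=0}^{b}((\cdot)[i]^{\oplus\binom{b}{i}})$ extends over $W_{m-1}$. Now one has a perfect complex on $V_m$ and a perfect complex on $W_{m-1}$ whose restrictions to the overlap $V_m \cap W_{m-1}$ agree up to the gluing isomorphism; after passing to a further summand of the form $\oplus_{i}((\cdot)[i]^{\oplus\binom{c}{i}})$ to absorb the discrepancy between the two glued isomorphisms (this is the role the ``$+\,[1]$'' trick plays in Lemma \ref{extension unrestricted}, now iterated), the homotopy pushout \cite[3.20.4]{TT90} of these two complexes along the overlap produces a perfect complex on $W_m = W_{m-1} \cup V_m$ restricting to the relevant shifted-sum of $F$. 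The bookkeeping shows the total shift pattern is again of the form $\oplus_{i=0}^{n}(F[i]^{\oplus\binom{n}{i}})$ for a single $n$ (binomial sums compose to binomial sums, as in the passage from Lemma \ref{extension key lemma} to Lemma \ref{extension unrestricted}); the boundedness $\leq N$ and the support-in-$Z$ conditions are preserved throughout since all the constructions — brutal truncation, cones, homotopy pushouts, tensoring by $K^{+} \in \mathbf{D}_{qc}^{\leq 0}$ — keep us inside $\mathrm{Perf}^{\leq N}_{Z}$.

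The main obstacle I expect is the combinatorial control of the shifts: at each gluing step one is forced to pass to a larger direct sum of shifts, and I must verify that after $r$ stages the accumulated pattern still collapses to a single binomial sum $\oplus_{i=0}^{n}(F[i]^{\oplus\binom{n}{i}})$ rather than something messier — this is the Koszul-type identity $\binom{a}{\cdot}*\binom{b}{\cdot} = \binom{a+b}{\cdot}$ under the convolution that arises from iterating the cone-of-a-direct-sum-of-triangles trick, and it is exactly what makes the clean statement possible. A secondary point requiring care is that the homotopy-pushout gluing in \cite[3.20.4]{TT90} only glues two complexes whose restrictions are \emph{isomorphic} on the overlap, whereas I have restrictions that agree only after extension/stabilization; this is precisely why Proposition \ref{extension restricted} and the inductive hypothesis must be applied first to both pieces, and why the extra summand to kill the automorphism discrepancy is unavoidable. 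Everything else — that $W_{m-1} \cap V_m$ is divisorial, that Lemma \ref{extension lemma 1} and the thickness of $\perfn{W}$ let us extract and reassemble summands, that supports behave correctly under Lemma \ref{support lemma 5} — is routine given the earlier results.
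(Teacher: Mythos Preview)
Your approach is essentially the paper's: cover $X = U \cup V_1 \cup \dots \cup V_n$ by affines and induct on $n$, using Proposition~\ref{extension restricted} on each affine $V_m$ together with the homotopy pushout of \cite[3.20.4]{TT90}. The paper's recursion is, however, tighter than what you sketch. Rather than invoking the full proposition for $W_{m-1}$ as a black box, the paper simply carries forward a single complex $F_{m-1}$ on $W_{m-1}=U\cup V_1\cup\dots\cup V_{m-1}$ whose restriction to $U$ is $\bigoplus_i F[i]^{\oplus\binom{m-1}{i}}$; to pass to $W_m$ one applies Proposition~\ref{extension restricted} on the affine $V_m$ to extend $(F_{m-1}\oplus F_{m-1}[1])|_{W_{m-1}\cap V_m}$ to some $F_{V_m}$ on $V_m$, then glues $F_{m-1}\oplus F_{m-1}[1]$ and $F_{V_m}$ by homotopy pushout. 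No separate extension step over $W_{m-1}$ is needed, and your worry about an ``automorphism discrepancy'' forcing a further stabilization does not arise: the extension $F_{V_m}$ comes \emph{by construction} equipped with a specific isomorphism to $(F_{m-1}\oplus F_{m-1}[1])|_{W_{m-1}\cap V_m}$, so the gluing data is already present. The binomial pattern then falls out of iterating $G\mapsto G\oplus G[1]$ exactly $n$ times --- this is your convolution identity $\binom{1}{\cdot}^{*n}=\binom{n}{\cdot}$ in disguise. (Also, your stated inductive hypothesis has a slip: you want extensions from $U$ to $W_{m-1}$, not from $W_{m-1}$ to itself.)
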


\begin{proof}
	There exists a finite set $\{V_1 ,  \dots ,V_n\}$ of open affine subsets of $X$ such that 
	$X = U \cup V_1 \dots  \cup V_n$.  We prove the result recursively.  
	
	The case n = 1.  For $U \cup V_1$ and $F \in \mathrm{Perf} ^{\leq N} _{Z\cap U} (U)$.  
	Consider 
	the complex $F \oplus F[1]$.  Since $V_1$ is divisorial, by Proposition 
	\ref{extension restricted} the complex $(F \oplus F[1])|_{U \cap V_1}$ has an extension say
	$ F_{V_1}$ in $\mathrm{Perf} ^{\leq N} _{Z\cap V_1} (V_1)$.  Now we take the homotopy 
	pushout of $ F_{V_1}$ on $V_1$ and $F \oplus F[1]$ on $U$, by the methods of \cite[3.20.4]
	{TT90}, which will provide an extension, say $F_1$,  of $F \oplus F[1]$ in $
	\mathrm{Perf} ^{\leq N} _{Z \cap (U \cup V_1)} (U \cup V_1)$.
	
	Next for $U \cup V_1 \cup V_2$ we repeat the
	construction of step $n = 1$ by replacing $U$ by $U \cup V_1$, $V_1$ by $V_2$ and
	$F$ by $F_1$.  If we denote the extension of $F_1 \oplus F_1[1]$ constructed in 
	this manner by $F_2$,  then $F_2 |_U \cong F \oplus F[1]^{\oplus 2} \oplus F[2]$.  
	Repeating the construction $n$ times we obtain $F_n$, which is an extension of
	 $\oplus _i (F[i])^{\oplus \binom {n} {i}} $ in $\mathrm{Perf} ^{\leq N} _{Z} (X)$. 
	
\end{proof}

\begin{lemma}
\label{non zero map}
	Let $X$ be a quasi-compact quasi-separated scheme.  Let $U \subset X $ be a quasi 
	compact open subset and $Z$ be a closed subset of $X$ with $X \setminus Z$ quasi 
	compact. 
	Let $F \in \derive X$.  If $F \in (\mathrm{Perf} ^{\leq N} _{Z} (X))^{\perp}$ then $F|_U \in 
	(\mathrm{Perf} ^{\leq N} _{Z\cap U} (U))^{\perp}$.
\end{lemma}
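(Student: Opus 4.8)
We argue by contraposition. Suppose $F|_U \notin (\mathrm{Perf}^{\leq N}_{Z \cap U}(U))^{\perp}$, so that there are a complex $A \in \mathrm{Perf}^{\leq N}_{Z \cap U}(U)$ and a nonzero morphism $a \colon A \rar F|_U$ in $\derive U$. The plan is to produce from $a$ a nonzero morphism from some object of $\mathrm{Perf}^{\leq N}_{Z}(X)$ to $F$, which contradicts $F \in (\mathrm{Perf}^{\leq N}_{Z}(X))^{\perp}$. First I would apply Proposition \ref{K + K} to $A$, obtaining a positive integer $n$, an object $E \in \mathrm{Perf}^{\leq N}_{Z}(X)$, and an isomorphism $E|_U \cong \bigoplus_{i = 0}^{n} A[i]^{\oplus \binom{n}{i}}$. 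Composing this isomorphism with the projection onto the $i = 0$ summand (which is $A$, as $\binom{n}{0} = 1$) and then with $a$ yields a morphism $a' \colon E|_U \rar F|_U$ that is still nonzero, being $a$ precomposed with a split epimorphism.

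Next I would extend $a'$ across the open immersion $j \colon U \rar X$ without leaving $\mathrm{Perf}^{\leq N}_{Z}(X)$. Let $\phi$ be the filtration of supports of $X$ with $\phi(i) = Z$ for $i \leq N$ and $\phi(i) = \emptyset$ for $i > N$. By Proposition \ref{U}, its associated subcategory $\cat U_{\phi} = \{\, G \in \mathbf{D}_{qc}^{\leq N}(X) : \Supph(G) \subseteq Z \,\}$ is a cocomplete $\otimes$-preaisle of $\derive X$, and $E \in \mathrm{Perf}^{\leq N}_{Z}(X) \subseteq \cat U_{\phi}$. Applying Proposition \ref{extension of morphism} to $a'$ with $\cat U = \cat U_{\phi}$ produces an object $E' \in \cat U_{\phi}$, a morphism $c \colon E' \rar E$ with $c|_U$ an isomorphism, and a morphism $b \colon E' \rar F$ with $b|_U = a' \circ c|_U$; for a general (possibly non-divisorial) quasi compact quasi separated $X$, I would obtain this same morphism-extension statement, $\otimes$-preaisle refinement included, from the divisorial case by the homotopy pushout induction over a finite affine cover of $X$ employed in the proof of Proposition \ref{K + K}. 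Since $E'$ is perfect, $E' \in \perfect X \cap \cat U_{\phi} = \mathrm{Perf}^{\leq N}_{Z}(X)$, and $b|_U = a' \circ c|_U$ — the composite of an isomorphism with the nonzero $a'$ — is nonzero, so $b \neq 0$. Hence $\Hom_{\derive X}(E', F) \neq 0$ with $E' \in \mathrm{Perf}^{\leq N}_{Z}(X)$, contradicting $F \in (\mathrm{Perf}^{\leq N}_{Z}(X))^{\perp}$.

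The one delicate point is the morphism-extension step, where two things need care. First, the extended complex $E'$ must be kept inside $\mathrm{Perf}^{\leq N}_{Z}(X)$; this is exactly why the argument is routed through the $\otimes$-preaisle $\cat U_{\phi}$, since in Proposition \ref{extension of morphism} one replaces $E$ only by its tensor product with a perfect complex concentrated in non-positive degrees, so neither the degree bound $\leq N$ nor the support condition $\Supph \subseteq Z$ is disturbed. Second, one cannot substitute for this step a cone/triangle computation in $\derive X$ — say via the localization triangle for $\mathbf{R}j_{*}j^{*}F$, or for local cohomology along $X \setminus U$ — because $\mathrm{Perf}^{\leq N}_{Z}(X)$ is not closed under negative shifts: cones of its morphisms generically leave the category and one loses hold of the relevant $\Hom$-groups. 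Thus the passage from the morphism $a'$ on $U$ to an honest morphism $b$ out of a single object of $\mathrm{Perf}^{\leq N}_{Z}(X)$ is indispensable, and is the crux of the proof.
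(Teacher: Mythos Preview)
Your contrapositive strategy and the packaging of the degree and support constraints as membership in the $\otimes$-preaisle $\cat U_\phi$ are both sound. The difference from the paper lies in how the passage from $U$ to $X$ is organised, and on inspection your two-step scheme collapses into the paper's single induction.

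The paper proceeds by induction on the number $n$ of affines with $X=U\cup V_1\cup\dots\cup V_n$: for $n=1$ one takes a nonzero $E\to F|_U$ in $\cat S(U)$, passes to $E\oplus E[1]$, extends the object to $V$ via Proposition~\ref{extension restricted}, extends the morphism on $V$ via Proposition~\ref{extension of morphism} (legitimate since $V$ is affine, hence divisorial), and glues by the homotopy pushout of \cite[3.20.4]{TT90}; the induction step then bootstraps through $W=U\cup V_1\cup\dots\cup V_{n-1}$. Your route instead first manufactures a global $E\in\mathrm{Perf}^{\leq N}_Z(X)$ via Proposition~\ref{K + K} and then invokes Proposition~\ref{extension of morphism} on all of $X$. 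But that proposition carries an ample-family hypothesis, and your parenthetical that it extends to arbitrary qcqs $X$ ``by the homotopy pushout induction \dots\ employed in the proof of Proposition~\ref{K + K}'' is exactly the content that remains to be supplied. When you actually run that induction, at each new affine $V_k$ the complex already built over $U\cup V_1\cup\dots\cup V_{k-1}$ gets modified (tensored with some local $K^+$) before gluing, so it no longer coincides with your global $E$ on the overlap with the next affine; you are therefore forced to re-extend the object at every step, and the preliminary use of Proposition~\ref{K + K} does no work. What survives is precisely the paper's induction, which handles object and morphism simultaneously.
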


\begin{proof}
	For simplicity, we denote $\mathrm{Perf} ^{\leq N} _{Z} (X)$ by $\cat S (X)$ and 
	$\mathrm{Perf} ^{\leq N} _{Z\cap U} (U)$ by $\cat S (U)$.  There exists a finite set $\{V_1 ,  
	\dots ,V_n\}$ of open affine subsets of $X$ such that $X = U \cup V_1 \dots  \cup V_n$.  We 
	prove the result by induction on the number $n$ of open affine subsets.
	
	For $n = 1$.  Let $X = U \cup V$ and $F \in \derive X$.  If $F|_U \notin (\cat S(U))^{\perp}$ 
	then there is a non zero map $E \rar F|_U$ for some $E \in \cat S(U)$.  Now we take the map
	$E \oplus E[1] \rar E  \rar F|_U$.
	
	Since $V$ is divisorial  by Proposition \ref{extension 
	restricted} $(E \oplus E[1])|_{U \cap V}$ has an extension say $\prim E \in \cat S (V)$.  By
	Proposition \ref{extension of morphism} if needed replacing $\prim E$ we can extend the
	map $(E \oplus E[1])|_{U \cap V} \rar F|_{U \cap V}$ to a map $\prim E \rar F|_V$.
	Now we take the homotopy pushout
	of the maps $E \oplus E[1] \rar F|_U$  and $\prim E \rar F|_V$ by the methods
	of \cite[3.20.4]{TT90}.  This shows $F \notin (\cat S(X))^{\perp}$.
	
	Now the induction step.  We have $X = U \cup V_1 \dots  \cup V_n$.  Let 
	$W =  U \cup V_1 \dots  \cup V_{n-1}$.  By induction hypothesis we have the result for $W$,
	that is,  if $F|_U \notin (\cat S(U))^{\perp} $ then $F|_W \notin (\cat S (W))^{\perp}$.   Using 
	the case $n = 1$ for $W$ and $V_n$ we can obtain $F \notin (\cat S(X))^{\perp}$.
\end{proof}

\subsection{Compact generation of  tensor preaisles}

\begin{definition}
\label{D Thomason subset}
	A subset $Z$ is a \emph{specialization closed} subset of $X$ if for each $x \in Z$ the 
	closure of the singleton set $\{x\}$ is contained in $Z$, that is, $\bar{\{x\}} \subset Z$.  Note 
	that a specialization closed subset is a union of closed subsets of $X$.  A subset $Y$ is 
	a \emph{Thomason} subset of $X$ if $Y = \bigcup _{\alpha} Y_{\alpha}$ is a union of 
	closed subsets $Y_{\alpha}$ such that $X \setminus Y_{\alpha}$ is quasi-compact. 
	Thomason subsets are specialization closed subsets but the converse need not be true
	in general.
	However,  if $X$ is Noetherian then the two notions coincide since every open subset
	of a Noetherian scheme is quasi-compact.  
	
	A \emph{Thomason filtration} of $X$ is a map 
	$\phi : \Z \rar 2^X $ such that $\phi (i)$ is a 
	Thomason subset of $X$ and $\phi (i) \supset \phi(i+1)$ for all $i \in \Z$. 
	
\end{definition}

Let $R$ be a Noetherian ring and $Z$ be a Thomason subset of $\spec R$.  Suppose $i$ is a 
fixed integer.  Consider the Thomason filtration $\psi$ which is defined as
	\begin{align*}
		 \psi (j) 
		& = Z \hspace{1cm}\text{if } j \leq i; \\
		& =  \emptyset \hspace{1cm} \text{if } j > i .\\
	\end{align*}
We denote the subcategory of $\mathbf{D}(R)$ associated with $\psi$ by $\cat U^i _Z$.
Consider the following set of compact objects of $\mathbf{D}(R)$,
\[  \cat K ^i _Z  = \{ K( a_1 , a_2 \dots a_n)[-i] \mid \{a_1 , a_2 , \dots a_n\} \subset R \text{ and } 
	V(\langle ( a_1 , a_2 \dots a_n) \rangle) \subset Z \},\]

where $K( a_1 , a_2 \dots a_n) $ denote the Koszul complex associated to the set
	 $\{a_1 , a_2 , \dots a_n\} \subset R.$

\begin{lemma}
\label{Koszul 1}
	The preaisle $\cat U ^i _Z$ is compactly generated by $\cat K ^i _Z$.
\end{lemma}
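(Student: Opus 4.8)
The plan is to show the two inclusions $\langle \cat K^i_Z \rangle^{\leq 0} \subset \cat U^i_Z$ and $\cat U^i_Z \subset \langle \cat K^i_Z \rangle^{\leq 0}$, using the characterization of compactly generated preaisles in Proposition \ref{compact characterization}, specifically criterion (4): it suffices to check that $\cat K^i_Z \subset \cat U^i_Z$, and that any $A \in \cat U^i_Z$ with $\Hom(S, A) = 0$ for all $S \in \cat K^i_Z[\N]$ is zero. For the first inclusion I would note that a Koszul complex $K(a_1,\dots,a_n)$ has cohomology concentrated in non-positive degrees with support in $V(\langle a_1,\dots,a_n\rangle)$, so $K(a_1,\dots,a_n)[-i]$ has $\Supp(H^j(-)) \subset Z$ for $j \le i$ and empty otherwise; hence each generator lies in $\cat U^i_Z$. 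Since $\cat U^i_Z$ is a cocomplete preaisle by Proposition \ref{U}, it contains $\langle \cat K^i_Z \rangle^{\leq 0}$.

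The substantive direction is the second one. Suppose $A \in \cat U^i_Z$ satisfies $\Hom(K(a_1,\dots,a_n)[-i+m], A) = 0$ for all $m \ge 0$ and all $(a_1,\dots,a_n)$ with $V(\langle a_1,\dots,a_n\rangle) \subset Z$; I must show $A \cong 0$. The key observation is that $\Hom(K(\mathbf{a})[-i+m], A) \cong \Hom(K(\mathbf{a}), A[i-m]) = H^{i-m}(\mathbf{R}\Hom(K(\mathbf{a}), A))$, and since $K(\mathbf{a})$ is self-dual up to shift, $\mathbf{R}\Hom(K(\mathbf{a}), A) \cong K(\mathbf{a}) \otimes^{\mathbf{L}} A$ up to a shift by $n$; this is (a shift of) the Koszul cohomology $H^*_{\mathbf{a}}(A)$, which computes local cohomology-type invariants detecting the support along $V(\mathbf{a})$. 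So the vanishing hypothesis says that the relevant Koszul cohomology of $A$ vanishes in degrees $\le i$ for every finitely generated ideal with zero locus inside $Z$. Since $A \in \cat U^i_Z$ we already know $H^j(A) = 0$ for $j > i$ and $\Supp(H^j(A)) \subset Z$ for $j \le i$. I would then argue: the top nonzero cohomology sheaf $H^{i}(A)$ (if nonzero) is a quasi-coherent sheaf supported in $Z$; choosing a point $x$ in its support with $\overline{\{x\}} \subset Z$ and finitely many generators $a_1,\dots,a_n$ of an ideal defining a closed subset through $x$ lying inside $Z$, the Koszul cohomology in the top degree detects $\Gamma_{V(\mathbf{a})}$-torsion and cannot vanish — contradiction. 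An induction downward on the top degree, or equivalently a devissage using the brutal/canonical truncations of $A$ together with Lemma \ref{support lemma 1}, then forces all cohomology of $A$ to vanish.

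The main obstacle I anticipate is the bookkeeping in the inductive step: after killing the top cohomology sheaf one must pass to the truncation and re-apply the vanishing, but the Hom-vanishing hypothesis was stated for $A$ itself, not for its truncations, so I need the fact that $\langle \cat K^i_Z \rangle^{\leq 0} = \dperpn {\cat K^i_Z}$ (Lemma \ref{total 3}) to reduce cleanly — that is, work with the aisle abstractly rather than degree by degree. Concretely, I expect the proof to go: let $A \in \cat U^i_Z$ with $A \perp \cat K^i_Z[\N]$; form the t-decomposition triangle of $A$ for the aisle $\langle \cat K^i_Z \rangle^{\leq 0}$, giving $\tau^{\le} A \to A \to \tau^{>} A$; the cone $\tau^> A$ lies in $\cat K^i_Z[\N]^{\perp} = \cat U^i_Z{}^{\perp}$ (using Lemma \cite[Lemma 3.1]{AJS03}) and also in $\cat U^i_Z$ since $\cat U^i_Z$ is closed under extensions and $\tau^{\le}A \in \langle\cat K^i_Z\rangle^{\leq 0}\subset \cat U^i_Z$; hence $\tau^> A \in \cat U^i_Z \cap \cat U^i_Z{}^{\perp} = 0$, so $A \cong \tau^{\le} A \in \langle \cat K^i_Z \rangle^{\leq 0}$. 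This is precisely the pattern of the implication (4)$\Rightarrow$(2) in Proposition \ref{compact characterization}, so really the only genuine content to verify is that $\cat K^i_Z \subset \cat U^i_Z$ generates a preaisle whose right-orthogonal meets $\cat U^i_Z$ trivially — equivalently, that the Koszul complexes with zero locus in $Z$, shifted by $i$, together detect all of $\cat U^i_Z$; and that detection is exactly the classical statement that a complex supported on $Z$ with bounded-above cohomology is built from such Koszul complexes, which I would cite or reprove via the affine classification results of Alonso et al.\ \cite{AJS10} referenced earlier.
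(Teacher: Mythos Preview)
The paper's proof is a one-line citation to \cite[Corollary 3.9]{AJS10}, and your proposal, after some scaffolding, lands on the same citation for the only substantive step. The framework you set up via criterion (4) of Proposition \ref{compact characterization} is correct but adds nothing beyond what is already packaged in that proposition; once you have $\cat K^i_Z \subset \cat U^i_Z$ (which is clear from the structure of Koszul complexes, as you note), the entire content of the lemma is the detection statement that no nonzero object of $\cat U^i_Z$ is right-orthogonal to $\cat K^i_Z[\N]$ --- and for that you defer to \cite{AJS10}, just as the paper does.

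One genuine slip to flag: in your ``concrete'' paragraph you assert $\cat K^i_Z[\N]^{\perp} = (\cat U^i_Z)^{\perp}$, citing \cite[Lemma 3.1]{AJS03}. That lemma only gives $\cat K^i_Z[\N]^{\perp} = (\langle \cat K^i_Z\rangle^{\leq 0})^{\perp}$; identifying this with $(\cat U^i_Z)^{\perp}$ is precisely the statement of the lemma you are trying to prove, so the argument as written is circular at that point. You do recognize this in the next sentence and correctly isolate the remaining content, so the overall plan is sound --- but the ``concrete'' paragraph as stated should be struck, since it does no independent work.
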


\begin{proof}
	\cite[Corollary 3.9]{AJS10}.
\end{proof}

The following result is well known and first appeared in \cite[Theorem 3.10]{AJS10}. 
Since it is a crucial step in achieving Theorem \ref{Compact generation},  we include a proof.

 \begin{proposition}
\label{compact affine}
	Let $R$ be a commutative Noetherian ring. 
	Let $\phi$ be a Thomason filtration of $\spec R$.
	The preaisle $\cat U _{\phi}$ is compactly generated by the following set
	\[  \cat K_{\phi}  = \bigcup_{i \in \Z} \cat K ^i _{\phi(i)}.\]
	
\end{proposition}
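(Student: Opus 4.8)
The plan is to reduce the general filtration to the ``one-step'' filtrations $\psi$ already treated in Lemma \ref{Koszul 1}, and then glue. First I would observe that for a filtration $\phi$ of Thomason subsets, an object $E$ lies in $\cat U_\phi$ precisely when $\Supp(H^i(E)) \subset \phi(i)$ for every $i$, i.e. when $E$ lies in $\cat U^i_{\phi(i)}$ for every $i \in \Z$ simultaneously; hence $\cat U_\phi = \bigcap_{i \in \Z} \cat U^i_{\phi(i)}$. Since each $\cat U^i_{\phi(i)}$ is compactly generated by $\cat K^i_{\phi(i)}$ (Lemma \ref{Koszul 1}), by Proposition \ref{compact characterization}(3) we have $\cat U^i_{\phi(i)} = {}^\perp((\cat K^i_{\phi(i)})[\N]^\perp)$, and therefore
\[
	\cat U_\phi = \bigcap_{i \in \Z} {}^\perp\!\big((\cat K^i_{\phi(i)})[\N]^\perp\big)
	= {}^\perp\!\Big(\big(\textstyle\bigcup_{i}\cat K^i_{\phi(i)}\big)[\N]^\perp\Big)
	= {}^\perp\!\big(\cat K_\phi[\N]^\perp\big),
\]
where the middle equality is the elementary fact that $^\perp(-)$ turns unions of generating sets into intersections of the resulting subcategories (one checks both inclusions directly from the definition of $^\perp$ and $\perp$).

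Next I would verify the hypotheses needed to invoke Proposition \ref{compact characterization}. The set $\cat K_\phi = \bigcup_{i\in\Z}\cat K^i_{\phi(i)}$ is a set (not a proper class) of compact objects of $\mathbf{D}(R)$, since each Koszul complex is perfect hence compact, and there are only set-many finite subsets of $R$. Moreover $\cat U_\phi$ is a cocomplete $\otimes$-preaisle by Proposition \ref{U} (with $X = \spec R$). One must also check $\cat K_\phi \subset \cat U_\phi$: a generator $K(a_1,\dots,a_n)[-i] \in \cat K^i_{\phi(i)}$ has its single nonzero cohomology concentrated in degrees $\geq i$ after the shift — more precisely $\Supp(H^j(K(a_1,\dots,a_n)[-i])) \subset V(\langle a_1,\dots,a_n\rangle) \subset \phi(i) \subset \phi(j)$ for $j \leq i$ and is empty for — wait, the Koszul complex $K(a_1,\dots,a_n)$ lives in degrees $[-n,0]$, so $K(a_1,\dots,a_n)[-i]$ lives in degrees $[i-n, i]$; for $i-n \le j \le i$ we get $\Supp(H^j) \subset V(\langle a_1,\dots,a_n\rangle) \subset \phi(i) \subset \phi(j)$ because $\phi$ is decreasing and $j \le i$; for $j$ outside that range the cohomology vanishes. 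Hence $\cat K_\phi \subset \cat U_\phi$.

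With $\cat U_\phi = {}^\perp(\cat K_\phi[\N]^\perp) = \langle \cat K_\phi \rangle^{\leq 0}$ (Lemma \ref{total 3}) and $\cat K_\phi$ a set of compact objects, Proposition \ref{compact characterization}, equivalence $(1) \Leftrightarrow (3)$, gives that $\cat U_\phi$ is compactly generated by $\cat K_\phi$, which is exactly the assertion. I expect the main obstacle to be the identity $\bigcap_i {}^\perp(\cat S_i^\perp) = {}^\perp\big((\bigcup_i \cat S_i)^\perp\big)$ — that is, that the intersection of the total preaisles generated by the $\cat S_i$ is generated by the union. The inclusion $\supseteq$ is immediate since the right side kills a larger test class; for $\subseteq$ one uses that each member $S$ of $\cat K_\phi[\N]$ lies in some $\cat K^i_{\phi(i)}[\N]$, so $\cat K_\phi[\N]^\perp = \bigcap_i \cat K^i_{\phi(i)}[\N]^\perp$, and then $^\perp$ of an intersection contains the union of the $^\perp$'s but here we need it to contain the intersection $\bigcap_i {}^\perp(\cat K^i_{\phi(i)}[\N]^\perp)$ — which indeed holds because any object orthogonal on the left to each $\cat K^i_{\phi(i)}[\N]^\perp$ separately need not obviously be orthogonal to the intersection, so the honest argument instead goes: $E \in \bigcap_i \cat U^i_{\phi(i)}$ iff $\Supph^{\ge j}(E) \subset \phi(j)$ for all $j$ iff $E \in \cat U_\phi$, and then one applies Proposition \ref{compact characterization}(4) directly — if $\Hom(S,E) = 0$ for all $S \in \cat K_\phi[\N]$ then for each fixed $i$ we have $\Hom(S,E)=0$ for all $S \in \cat K^i_{\phi(i)}[\N]$, so by the compact generation of $\cat U^i_{\phi(i)}$ the truncation-type argument forces the relevant cohomology to be supported in $\phi$, and letting $i$ range shows $E \in \cat U_\phi$; conversely $\cat K_\phi \subset \cat U_\phi$ was checked above. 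This two-sided check, packaged through Proposition \ref{compact characterization}(4), is the cleanest route and avoids any delicate manipulation of left orthogonals.
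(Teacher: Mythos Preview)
Your overall strategy---reduce to the one-step filtrations handled by Lemma \ref{Koszul 1} and conclude via Proposition \ref{compact characterization}(4)---is exactly the paper's. The paper also checks $\cat K_\phi \subset \cat U_\phi$ implicitly and then verifies criterion (4). You correctly abandoned the attempted identity $\bigcap_i {}^\perp(\cat S_i[\N]^\perp) = {}^\perp\big((\bigcup_i \cat S_i)[\N]^\perp\big)$; that inclusion genuinely fails in general, and the paper does not go that way either.

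There is, however, a misstatement in your final paragraph. Criterion (4) asks you to show: for $E \in \cat U_\phi$ with $\Hom(S,E)=0$ for all $S \in \cat K_\phi[\N]$, one has $E \cong 0$. You instead conclude ``letting $i$ range shows $E \in \cat U_\phi$,'' which was the hypothesis, not the goal. The correct finish is immediate once you have the pieces: since $\cat U_\phi \subset \cat U^i_{\phi(i)}$ (your intersection observation) and $\cat K^i_{\phi(i)}[\N] \subset \cat K_\phi[\N]$, the object $E$ lies in $\cat U^i_{\phi(i)}$ and is right-orthogonal to $\cat K^i_{\phi(i)}[\N]$; now (4) applied to the single compactly generated preaisle $\cat U^i_{\phi(i)}$ gives $E \cong 0$ outright. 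No ranging over $i$ is needed. The paper instead passes through the auxiliary Lemma \ref{Koszul 2} (showing $\cat U_\phi^\perp = \bigcap_i (\cat U^i_{\phi(i)})^\perp$, via a cohomology argument) to land $E$ in $\cat U_\phi \cap \cat U_\phi^\perp$; your corrected route is marginally shorter since it bypasses that lemma entirely.
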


First we prove a lemma.  
\begin{lemma}
\label{Koszul 2}
	Let $\phi$ be a Thomason filtration of $\spec R$.  Consider the preaisles $\cat U 
	_{\phi}$ and $\cat U ^i _{\phi(i)}$.
	  Then we have,
	\[ \cat U _{\phi} ^{\perp} = \bigcap _{i \in \Z} (\cat U ^i _{\phi(i)})^{\perp}. \]
\end{lemma}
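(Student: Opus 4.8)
\textbf{Proof plan for Lemma \ref{Koszul 2}.}

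The plan is to prove the two inclusions separately, both times working through the characterization of the associated subcategory $\cat U_\phi$ via supports of cohomology sheaves and the fact (Proposition \ref{U}) that it is a cocomplete $\otimes$-preaisle. First I would recall that, since $\phi(i) \subset \phi(j)$ whenever $i \geq j$, the filtration $\phi$ can be reassembled from the ``one-step'' filtrations $\psi_i$ appearing just before Lemma \ref{Koszul 1}: concretely, for each $i \in \Z$ let $\psi_i$ be the filtration with $\psi_i(j) = \phi(i)$ for $j \leq i$ and $\psi_i(j) = \emptyset$ for $j > i$, so that $\cat U^i_{\phi(i)} = \cat U_{\psi_i}$, and observe $\phi(j) = \bigcap_{i \geq j} \psi_i(j)$ for every $j$ — indeed for $i \geq j$ we have $\psi_i(j) = \phi(i) \supset \phi(j)$ wait, this needs care: actually $\psi_i(j) = \phi(i)$ only when $j \le i$, and $\phi(i) \subseteq \phi(j)$, so $\bigcap_{i \ge j}\psi_i(j) = \bigcap_{i \ge j}\phi(i)$, which equals $\phi(j)$ since the $\phi(i)$ are decreasing in $i$. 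Hence $E \in \cat U_\phi$ if and only if $E \in \cat U_{\psi_i}$ for every $i$, i.e. $\cat U_\phi = \bigcap_{i \in \Z} \cat U^i_{\phi(i)}$ as subcategories.

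For the inclusion $\cat U_\phi^\perp \supseteq \bigcap_i (\cat U^i_{\phi(i)})^\perp$: since $\cat U_\phi = \bigcap_i \cat U^i_{\phi(i)} \subset \cat U^i_{\phi(i)}$ for each $i$, taking right-perpendiculars reverses inclusions and gives $\cat U_\phi^\perp \supseteq (\cat U^i_{\phi(i)})^\perp$ for each $i$, hence $\cat U_\phi^\perp \supseteq \bigcup_i (\cat U^i_{\phi(i)})^\perp$ and a fortiori the stated containment of the intersection — actually the intersection is even smaller, so this direction is immediate and essentially formal. Wait, the nontrivial content is the reverse: $\cat U_\phi^\perp \subseteq \bigcap_i (\cat U^i_{\phi(i)})^\perp$. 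This is also formal from $\cat U^i_{\phi(i)} \supseteq \cat U_\phi$? No — that gives $(\cat U^i_{\phi(i)})^\perp \subseteq \cat U_\phi^\perp$, the wrong direction. So the real work is showing $\cat U_\phi^\perp \subseteq (\cat U^i_{\phi(i)})^\perp$ for each $i$, equivalently that $\cat U^i_{\phi(i)}$ is ``generated modulo perpendicular'' by $\cat U_\phi$; since $\cat K^i_{\phi(i)}$ compactly generates $\cat U^i_{\phi(i)}$ (Lemma \ref{Koszul 1}) and $\cat K^i_{\phi(i)} \subset \cat U_\phi$ (each Koszul generator $K(a_1,\dots,a_n)[-i]$ has cohomology supported in degrees $\le i$ inside $V(\langle a_1,\dots,a_n\rangle) \subset \phi(i) \subset \phi(j)$ for $j \le i$), it suffices to know $(\cat U^i_{\phi(i)})^\perp = \cat K^i_{\phi(i)}[\N]^\perp$. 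That last equality is exactly \cite[Lemma 3.1]{AJS03} applied to the compactly generated aisle $\cat U^i_{\phi(i)} = \langle \cat K^i_{\phi(i)}\rangle^{\le 0}$, combined with Proposition \ref{compact characterization}. Then $F \in \cat U_\phi^\perp$ kills all of $\cat U_\phi \supseteq \cat K^i_{\phi(i)}[\N]$, hence $F \in \cat K^i_{\phi(i)}[\N]^\perp = (\cat U^i_{\phi(i)})^\perp$.

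So the cleanest route, which I would actually write, bypasses the support reassembly for one direction: prove $\bigcap_i (\cat U^i_{\phi(i)})^\perp \subseteq \cat U_\phi^\perp$ directly by noting $\cat U_\phi = \preaisle{\cat K_\phi}$ would be circular (that is Proposition \ref{compact affine}, which uses this lemma), so instead use $\cat U_\phi = \bigcap_i \cat U^i_{\phi(i)}$ from the support computation above to get $\cat U_\phi \subseteq \cat U^i_{\phi(i)}$ hence one containment; and for the other containment use $\cat K^i_{\phi(i)} \subseteq \cat U_\phi$ together with $(\cat U^i_{\phi(i)})^\perp = \cat K^i_{\phi(i)}[\N]^\perp$ as just described. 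The main obstacle is purely bookkeeping: verifying carefully that the Koszul generators $K(a_1,\dots,a_n)[-i]$ with $V(\langle a_1,\dots,a_n\rangle)\subseteq \phi(i)$ genuinely lie in $\cat U_\phi$ — one needs that the Koszul complex $K(a_1,\dots,a_n)$ sits in nonpositive cohomological degrees with all cohomology supported in $V(\langle a_1,\dots,a_n\rangle)$, so that after the shift $[-i]$ its degree-$j$ cohomology vanishes for $j > i$ and is supported in $V(\langle a_1,\dots,a_n\rangle) \subseteq \phi(i) \subseteq \phi(j)$ for $j \le i$ — and citing the right form of \cite[Lemma 3.1]{AJS03} to pass between $(\cat U^i_{\phi(i)})^\perp$ and the perpendicular of the generating set; no genuinely hard argument is needed beyond these identifications.
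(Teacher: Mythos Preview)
Your argument has a genuine error that breaks the nontrivial inclusion. The correct containment between the preaisles is
\[
\cat U^i_{\phi(i)} \subseteq \cat U_\phi,
\]
not the other way around. Indeed, your support computation goes astray at the line ``$\bigcap_{i \ge j}\phi(i) = \phi(j)$ since the $\phi(i)$ are decreasing in $i$'': because $\phi$ is decreasing, $\phi(j)$ is the \emph{largest} term in that family, so the intersection is $\bigcap_{i\ge j}\phi(i)$, typically much smaller than $\phi(j)$. Consequently $\cat U_\phi \ne \bigcap_i \cat U^i_{\phi(i)}$ in general. A concrete counterexample: take $\phi(i)=\spec R$ for $i\le 0$ and $\phi(i)=\emptyset$ for $i>0$. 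Then $\cat U_\phi=\mathbf D^{\le 0}(R)$, while for any $i>0$ one has $\cat U^i_{\phi(i)}=\cat U^i_\emptyset=\{0\}$, so the intersection is zero. In particular $\cat U_\phi \not\subseteq \cat U^i_{\phi(i)}$, and your proposed route to $\bigcap_i (\cat U^i_{\phi(i)})^\perp \subseteq \cat U_\phi^\perp$ collapses.

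Your Koszul-generator argument \emph{does} correctly show $\cat U_\phi^\perp \subseteq (\cat U^i_{\phi(i)})^\perp$ for each $i$, but that is exactly the easy inclusion the paper dispatches in one line from $\cat U^i_{\phi(i)}\subseteq \cat U_\phi$ (your argument is just a longer way of establishing that same containment via generators). What remains---and what you have not addressed---is the reverse inclusion $\bigcap_i (\cat U^i_{\phi(i)})^\perp \subseteq \cat U_\phi^\perp$. The paper handles this by contradiction: given $A$ in the intersection and $B\in\cat U_\phi$ with $\Hom(B,A)\ne 0$, one extracts a degree $i$ for which $\Hom(H^i(B)[-i],A)\ne 0$; since $\Supp H^i(B)\subseteq \phi(i)$, the single-module complex $H^i(B)[-i]$ lies in $\cat U^i_{\phi(i)}$, contradicting $A\in(\cat U^i_{\phi(i)})^\perp$. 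The point you are missing is precisely this passage through the individual cohomology objects $H^i(B)$---they, not $B$ itself, are what live in the one-step preaisles $\cat U^i_{\phi(i)}$.
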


\begin{proof}
	From the definition we have $ \cat U ^i _{\phi(i)} \subset \cat U _{\phi}$ for each $i$ hence 
	 \[ \cat U _{\phi} ^{\perp} \subset \bigcap _{i \in \Z} (\cat U ^i _{\phi(i)})^{\perp}. \]
	 
	 For the reverse inclusion, we prove by contradiction.  Suppose $A \in 
	 \bigcap _{i \in \Z} (\cat U ^i _{\phi(i)})^{\perp}$ and $A \notin \cat U _{\phi} ^{\perp}$.  Then 
	 there is an object $B \in \cat U _{\phi}$ such that $\mathrm{Hom} ( B , A) \neq 0$.  This 
	 means there is a non-zero map $f:B \rar A$.  If $B$ is bounded above, then for some
	 $i$,  $B \in \cat U^i _{\phi (i)}$.  This is a contradiction as 
	 $A \in (\cat U^i _{\phi (i)})^{\perp}$.  
	 Now, suppose $B$ is not bounded above.  Consider the map $f_n : \tau^n B \rar \tau^n A \rar 
	 A$ where $\tau^n A \rar A$ is the natural inclusion.  The map $f$ is a filtered colimit of the
	 sequence of maps $\{f_n\}$,  since  $f \neq 0$, all $f_n$ can not be zero.   Therefore,  there is 
	 an $i$ such that $f_i : \tau ^i B \rar A$ is non-zero.  This is again a contradiction as
	  $A \in (\cat U^i _{\phi (i)})^{\perp}$ .
\end{proof}

\begin{proof}[Proof of Proposition \ref{compact affine}]
	Let $A \in \cat U _{\phi}$ and $A \in (\cat K _{\phi} )^{\perp}$.  Note that $A \in (\cat 
	K_{\phi}) ^{\perp}$ implies $A \in (\cat K ^i _{\phi(i)}) ^{\perp}$ for all $i$.  From lemma 
	\ref{Koszul 1} we have $A \in (\cat U ^i _{\phi(i)})^{\perp}$ for all $i$.  By lemma 
	\ref{Koszul 2} we have $A \in  \cat U _{\phi}^{\perp}$ this means $A \in \cat U _{\phi} 
	\bigcap \cat U _{\phi}^{\perp}$ hence $A \cong 0$.  This proves,  by Proposition 
	\ref{compact characterization}, $\cat U _{\phi}$ is compactly generated by the set 
	$\cat K _{\phi}$.
\end{proof}

\begin{theorem}
\label{Compact generation}
	
	Let $X$ be a Noetherian scheme.
	Let $\phi$ be a Thomason filtration of $X$.  The \tensor preaisle $\cat U_{\phi}$ of $\derive X$ is 
	compactly generated.  
\end{theorem}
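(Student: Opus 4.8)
The plan is to show that $\cat U_\phi$ satisfies condition (4) of Proposition \ref{compact characterization}, deducing the statement on $X$ from the affine case (Proposition \ref{compact affine}) together with the orthogonality-descent Lemma \ref{non zero map}. First I will fix a finite cover $X = U_1 \cup \dots \cup U_m$ by affine opens $U_k = \spec R_k$ — possible because $X$, being Noetherian, is quasi-compact, and each $R_k$ is Noetherian. For every $k$ the restricted function $(\phi|_{U_k})(j) := \phi(j) \cap U_k$ is a filtration of Thomason subsets of $U_k$: each $\phi(j)\cap U_k$ is specialization closed in $U_k$, hence Thomason since $U_k$ is Noetherian. So Proposition \ref{compact affine} applies and $\cat U_{\phi|_{U_k}} \subseteq \mathbf{D}(R_k)$ is compactly generated by the set $\cat K_{\phi|_{U_k}} = \bigcup_i \cat K^i_{\phi(i)\cap U_k}$ of shifted Koszul complexes.

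For the generating set on $X$ I will take, for each $i \in \Z$, the essentially small collection of perfect complexes $E \in \mathrm{Perf}^{\leq i}(X)$ with $\Supph(E) \subseteq \phi(i)$ (here I use that $\perfect X$ is essentially small), and let $\cat G$ be the union of these over all $i$. That $\cat G \subseteq \cat U_\phi$ is immediate: if $E$ lies in the $i$-th piece then $H^j(E)=0$ for $j>i$, while for $j\le i$ we have $\Supp H^j(E)\subseteq \Supph(E)\subseteq \phi(i)\subseteq\phi(j)$ because $\phi$ is decreasing. The same computation shows $\cat G$ is stable under non-negative shifts as a class of objects, since a shift $E[m]$ with $m\ge 0$ lands in $\mathrm{Perf}^{\leq i-m}(X)$ with the same support $\subseteq\phi(i)\subseteq\phi(i-m)$; hence $\cat G[\N]\subseteq\cat G$.

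It then remains to show: if $A \in \cat U_\phi$ and $\Hom_{\derive X}(S,A)=0$ for all $S\in\cat G$, then $A\cong 0$. I will argue locally. Each $A|_{U_k}$ lies in $\cat U_{\phi|_{U_k}}$, because its cohomology sheaves are the restrictions of those of $A$. Suppose some $A|_{U_k}$ received a nonzero map from $G'\in\cat K^i_{\phi(i)\cap U_k}$; such a $G'$ is a shifted Koszul complex, so $G'\in\mathrm{Perf}^{\leq i}(U_k)$ with $\Supph(G')=V(\langle a_1,\dots,a_n\rangle)\subseteq\phi(i)\cap U_k$. Let $Z$ be the closure of $\Supph(G')$ in $X$: it is closed with quasi-compact complement ($X$ Noetherian), satisfies $Z\cap U_k=\Supph(G')$, and $Z\subseteq\phi(i)$ since $\phi(i)$ is specialization closed. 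Thus $G'\in\mathrm{Perf}^{\leq i}_{Z\cap U_k}(U_k)$ and $A|_{U_k}\notin(\mathrm{Perf}^{\leq i}_{Z\cap U_k}(U_k))^{\perp}$, so by the contrapositive of Lemma \ref{non zero map} we get $A\notin(\mathrm{Perf}^{\leq i}_{Z}(X))^{\perp}$; that is, some perfect $E\in\mathrm{Perf}^{\leq i}(X)$ with $\Supph(E)\subseteq Z\subseteq\phi(i)$ — hence $E\in\cat G$ — satisfies $\Hom_{\derive X}(E,A)\neq 0$, contradicting the hypothesis on $A$. So each $A|_{U_k}$ is right orthogonal to $\cat K_{\phi|_{U_k}}$, hence to $\cat K_{\phi|_{U_k}}[\N]$ (a positive shift of $K(a_1,\dots,a_n)[-i]$ is $K(a_1,\dots,a_n)[-(i-m)]$ with $V(\langle a_1,\dots,a_n\rangle)\subseteq\phi(i)\cap U_k\subseteq\phi(i-m)\cap U_k$, still in $\cat K_{\phi|_{U_k}}$). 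Since $A|_{U_k}\in\cat U_{\phi|_{U_k}}=\preaisle{K_{\phi|_{U_k}}}$, the argument of Proposition \ref{compact characterization} forces $A|_{U_k}\cong 0$; as $k$ ranges over the cover, all cohomology sheaves of $A$ vanish, so $A\cong 0$ and $\cat U_\phi=\preaisle G$ is compactly generated.

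The main obstacle is Lemma \ref{non zero map}: the descent of non-orthogonality from $X$ to a quasi-compact open, which is the only substantial input beyond the affine classification and which rests on the entire extension machinery of Section \ref{Ext} (the support-controlled extensions of Propositions \ref{extension restricted} and \ref{K + K}, and the homotopy-pushout gluing of \cite[3.20.4]{TT90}). Everything else — the local reduction, the support bookkeeping, and the translation between $\mathrm{Perf}^{\leq i}$ and filtrations — is routine. The one point to be careful about is that $\phi(i)$ need not itself be closed, which is why in the descent step I pass to the closure $Z$ of the support of a single Koszul generator; it is here that the Noetherian hypothesis on $X$ re-enters, guaranteeing $X\setminus Z$ is quasi-compact so that Lemma \ref{non zero map} applies.
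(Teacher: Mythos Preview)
Your proposal is correct and follows essentially the same approach as the paper's proof: the generating set $\cat G$ is exactly the paper's $\cat S_\phi = \bigcup_i \mathrm{Perf}^{\leq i}_{\phi(i)}(X)$, and both arguments verify condition (4) of Proposition \ref{compact characterization} by reducing to the affine case via Lemma \ref{non zero map} and Proposition \ref{compact affine}. Your version is more explicit in one place --- the paper simply asserts that Lemma \ref{non zero map} yields $\Hom(K,A|_U)=0$ for each Koszul generator $K$, while you spell out the passage to the closed subset $Z = \overline{\Supph(G')}$ and the verification that $Z\subseteq\phi(i)$ --- but this is exactly the detail implicit in the paper's invocation of the lemma.
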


\begin{proof}
	Consider the following essentially small set of compact objects on 
	$X$,
	
	\[ \cat S _{\phi} = \bigcup _{i \in \Z}  \mathrm{Perf} ^{\leq i} _{\phi(i)}(X). \]
	
	We will show $\cat U _{\phi} = 
	\langle {\cat S_{\phi}} \rangle ^{\leq 0} .$	 
	 By part (4) of Proposition \ref{compact characterization}  it is enough to show 
	if $A \in \cat U_\phi$ and $\Hom (S,  A) = 0$ for all $S \in \cat S_{\phi}$ then 
	$A \cong 0$.   Suppose $A \in \cat U_\phi$ and $A \in \cat S_{\phi} ^{\perp} = 
	\bigcap_i (\mathrm{Perf} ^{\leq i} _{\phi(i)}(X))^{\perp}$.  Let $U$ be an 
	open affine subset of $X$.  We denote the restriction of 
	$\phi$ on $U$ by $\phi |_U$.  For any $K \in 
	\cat K _{\phi |_U} = \bigcup _i \cat K ^i _{\phi |_U (i)}$ we have $\Hom ( K ,  A|_U) = 0$,  by 
	Lemma \ref{non zero map}.
	 Now by Proposition \ref{compact affine}
	we get $A|_U \cong 0$.  Since $U$ is arbitrary we get $A \cong 0$.
\end{proof}

\section{The classification theorem}
\label{section 4}

\subsection{Graded cohomological support of subcategories}

\begin{definition}
\label{Phi U}
Let $\cat U \subset \derive{X}$ be a subcategory.   The
 \emph{graded support of $\cat U$} is a function $\gsupp{\cat U}$, defined as
 \begin{center}
 	$ \gsupp{\cat U}(i) = \{ x \in X \mid \exists$  $E \in \cat U$ such that $x \in \Supp (H^i(E)) \}
 	$.
 \end{center}
 
\end{definition}

\begin{lemma}
\label{support}
	Let $\cat U = \taisle S$ be a \tensor aisle of \derive X generated by a set of compact objects 
	\cat S.  Then
	\[ \gsupp {\cat U}(i) = \bigcup_{S \in \cat  S} \Supph^{\geq i} (S).\]
	In other words,  the graded support of \cat U can be computed from a set of compact
	generators.
\end{lemma}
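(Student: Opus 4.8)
The plan is to recognise $\gsupp{\cat U}$ as a filtration of supports coming directly from the generators. Set $\phi(i) = \bigcup_{S \in \cat S} \Supph^{\geq i}(S)$ for each $i \in \Z$. Since $\Supph^{\geq i+1}(S) \subseteq \Supph^{\geq i}(S)$ for every $S$ and every $i$, the assignment $\phi$ is a filtration of supports in the sense of Definition \ref{U phi}, and the statement to be proved is exactly that $\gsupp{\cat U}(i) = \phi(i)$ for all $i$; I would establish the two inclusions separately.

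For $\phi(i) \subseteq \gsupp{\cat U}(i)$ I would use the shift trick: if $x \in \Supp(H^j(S))$ with $S \in \cat S$ and $j \geq i$, then, since $\cat U = \taisle S$ contains $\cat S$ and is closed under positive shifts, the object $S[j-i]$ lies in $\cat U$, and $H^i(S[j-i]) = H^j(S)$, so $x$ lies in the support of the $i$-th cohomology of an object of $\cat U$, i.e.\ $x \in \gsupp{\cat U}(i)$ by Definition \ref{Phi U}. Taking the union over $S \in \cat S$ gives this inclusion.

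For the reverse inclusion $\gsupp{\cat U}(i) \subseteq \phi(i)$, the point is to sandwich $\cat U$ inside the associated subcategory $\cat U_{\phi}$. Each generator $S \in \cat S$ satisfies $\Supph^{\geq i}(S) \subseteq \phi(i)$ for all $i$, hence $\cat S \subseteq \cat U_{\phi}$; by Proposition \ref{U} the subcategory $\cat U_{\phi}$ is a cocomplete \tensor preaisle of $\derive X$, and since $\cat U = \taisle S$ is by definition the smallest cocomplete \tensor preaisle containing $\cat S$, we conclude $\cat U \subseteq \cat U_{\phi}$. Then every $E \in \cat U$ has $\Supp(H^i(E)) \subseteq \phi(i)$, and feeding this into Definition \ref{Phi U} yields $\gsupp{\cat U}(i) \subseteq \phi(i)$. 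There is no serious obstacle here; the only subtlety worth flagging is that the containment $\cat U \subseteq \cat U_{\phi}$ rests solely on $\cat U_{\phi}$ being a cocomplete \tensor preaisle — Proposition \ref{U} — together with the minimality built into the notation $\taisle S$, so the lemma needs neither the structure theorem of Proposition \ref{keller tensor} nor its rigidity hypotheses.
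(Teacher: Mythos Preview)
Your proof is correct and takes a genuinely different route for the hard inclusion $\gsupp{\cat U}(i) \subseteq \phi(i)$. The paper argues constructively: it invokes Proposition~\ref{keller tensor}(2) to write an arbitrary $E \in \taisle S$ as a homotopy colimit of iterated extensions of coproducts of shifts of objects in $\cat K \otimes \cat S$, and then applies the support lemmas \ref{support lemma 1}--\ref{support lemma 3} and \ref{support lemma 5} step by step to bound $\Supph^{\geq i}(E)$. Your argument replaces this by a single containment: since $\cat U_{\phi}$ is a cocomplete \tensor preaisle by Proposition~\ref{U} and contains $\cat S$, minimality of $\taisle S$ gives $\cat U \subseteq \cat U_{\phi}$, and the conclusion drops out. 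This is cleaner and, as you point out, dispenses with both the structure theorem and the rigid-compact-generation hypothesis underlying Proposition~\ref{keller tensor}; the support lemmas are still present, but only implicitly through the proof of Proposition~\ref{U}. The paper's approach has the small virtue of making visible exactly which operations control $\Supph^{\geq i}$, but yours is the more natural argument once Proposition~\ref{U} is available.
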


\begin{proof}

	For a preaisle \cat U,  since it is closed under positive shifts, we have
	\[ \gsupp {\cat U}(i) = \bigcup_{E \in \cat U} \Supp (H^i(E)) = \bigcup_{E \in \cat U}
	\Supph^{\geq i} (E).\] Clearly,
	\[  \bigcup_{S \in \cat  S} \Supph^{\geq i} (S) \subset \bigcup_{E \in \cat U}\Supph^{\geq i} 
	(E).\]

	By Proposition \ref{keller tensor}(2) and the support lemmas \ref{support lemma 1}, 
	\ref{support lemma 2},  \ref{support lemma 3}, and \ref{support lemma 5} for
	any $E \in \cat U$ we have $\Supph ^{\geq i} (E) \subset  \bigcup_{S \in \cat  S} 
	\Supph^{\geq i} (S)$.  This proves our claim.
	
\end{proof}

\begin{lemma}
\label{Thomason subset}
	Let $X$ be a Noetherian scheme and \cat U be a compactly generated $\otimes$-aisle of 
	\derive X.  Then \gsupp {\cat U} is a Thomason filtration of $X$.
\end{lemma}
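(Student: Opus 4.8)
The plan is to show two things for a compactly generated $\otimes$-aisle $\cat U = \taisle S$ (with $\cat S$ a set of compact objects): first, that each set $\gsupp{\cat U}(i)$ is a Thomason subset of $X$; second, that the function $i \mapsto \gsupp{\cat U}(i)$ is decreasing, i.e.\ $\gsupp{\cat U}(i+1) \subset \gsupp{\cat U}(i)$.

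For the decreasing property, I would invoke Lemma \ref{support}, which computes $\gsupp{\cat U}(i) = \bigcup_{S \in \cat S} \Supph^{\geq i}(S)$ from a set of compact generators. Since $\Supph^{\geq i+1}(S) \subset \Supph^{\geq i}(S)$ directly from the definition of $\Supph^{\geq i}(-)$ (as already recorded inside the proof of Lemma \ref{support lemma 2}), taking unions over $S \in \cat S$ gives $\gsupp{\cat U}(i+1) \subset \gsupp{\cat U}(i)$ immediately. So this half is essentially free once Lemma \ref{support} is in hand.

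For the Thomason property, again by Lemma \ref{support} it suffices to check that $\Supph^{\geq i}(S) = \bigcup_{j \geq i}\Supp(H^j(S))$ is Thomason for a single compact object $S$, since an arbitrary union of Thomason subsets is Thomason. A compact object of $\derive X$ on a Noetherian (hence quasi-compact quasi-separated) scheme is perfect, so $S$ has only finitely many nonzero cohomology sheaves, each of which is a coherent $\CO_X$-module; thus $\Supph^{\geq i}(S)$ is a \emph{finite} union of supports of coherent sheaves. The support of a coherent sheaf on a Noetherian scheme is a closed subset, and on a Noetherian scheme every closed subset has quasi-compact open complement (every open subset of a Noetherian scheme is quasi-compact, as noted in Definition \ref{D Thomason subset}); hence each $\Supp(H^j(S))$ is a Thomason subset, and a finite union of Thomason subsets is Thomason. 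Taking the union over all $S \in \cat S$ then shows $\gsupp{\cat U}(i)$ is Thomason.

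The only genuine content here is the reduction via Lemma \ref{support} to the compact generators, which lets us replace the unwieldy union over \emph{all} objects of $\cat U$ by a union over a set; everything after that is a routine check using that perfect complexes on a Noetherian scheme have finitely many coherent cohomology sheaves with closed support. I do not anticipate a real obstacle; the one point to state carefully is why compact objects of $\derive X$ are perfect (this is standard for quasi-compact quasi-separated schemes), and why each coherent support is Thomason (closedness of supports of coherent sheaves plus the Noetherian quasi-compactness of open subsets).
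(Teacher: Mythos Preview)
Your proposal is correct and follows essentially the same approach as the paper: reduce via Lemma~\ref{support} to compact generators, observe that a perfect complex on a Noetherian scheme is bounded with coherent cohomology so each $\Supph^{\geq i}(S)$ is closed (hence Thomason), and then check the decreasing property. The only cosmetic difference is that the paper deduces $\gsupp{\cat U}(i+1)\subset\gsupp{\cat U}(i)$ directly from closure of $\cat U$ under positive shifts rather than from the inclusion $\Supph^{\geq i+1}(S)\subset\Supph^{\geq i}(S)$, but these are equivalent one-line observations.
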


\begin{proof}
	As $X$ is quasi-compact a perfect complex $S$ on $X$ is bounded.  Also, the
	cohomology sheaves of $S$ are finite type $\CO _X$-modules.
	Therefore $\Supph ^{\geq i} (S) $ is a closed subset of $X$; as it is a finite union of 
	closed subsets.  By Lemma \ref{support} the set \gsupp {\cat U}$(i)$ is a Thomason 
	subset.  Since \cat U is closed under positive shifts we have $\gsupp {\cat U}(i+1) \subset 
	\gsupp {\cat U}(i)$ for each $i$,  hence $\gsupp {\cat U}$ is a Thomason 
	filtration.
\end{proof}

\subsection{The classification theorem for Noetherian schemes}

Let $X$ be a Noetherian scheme and $U$ be an open affine subset of $X$ and $j : U \rar X$ 
denote the open immersion.  Let $\cat U$ be a \tensor preaisle of $\derive X$.  We define 
$\cat U|_U \colonequals \langle j^* \cat U \rangle ^{\leq 0}$ - the restriction of 
$\cat U$ to the open affine subset $U$.  Suppose $(\cat U , \cat V[1])$ is a tensor t-structure on 
\derive X.  We define $\cat V|_U \colonequals (\cat U|_U)^{\perp}$.

\begin{remark}
	In the earlier manuscript, there was an error in the proof of Lemma \ref{j!}. The proof was 	
	based on an erroneous lemma, which was used to give a shorter proof of Lemma 
	\ref{j!}.  The error was first pointed out to us by Alexander Clark.  Later Suresh 
	Nayak also pointed out the same error.  We sincerely thank both of them for bringing our 
	attention to it.  In this version,  we give a correct proof of the lemma. 
\end{remark}

\begin{lemma}
\label{j!}
	Let $\cat U$ be a \tensor preaisle of \derive X.   If $F \in (\cat U)^{\perp}$ then $j^*F \in 
	(\cat U|_U)^{\perp}$.  
\end{lemma}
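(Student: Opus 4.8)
The plan is to verify the defining property $\Hom_{\derive{U}}(A, j^*F) = 0$ for every object $A$ of $\cat U|_U = \langle j^* \cat U \rangle^{\leq 0}$, and to do so by checking it first on the generating class $j^*\cat U$, in a form strong enough to survive the passage to the cocomplete preaisle generated. Concretely, I would first show that for every $E \in \cat U$ and every integer $n \geq 0$ one has $\Hom_{\derive{U}}\bigl(j^*E, (j^*F)[-n]\bigr) = 0$. Indeed, $E[n] \in \cat U$ since $\cat U$ is closed under positive shifts, hence $j_! j^*(E[n]) = j_!\bigl((j^*E)[n]\bigr) \in \cat U$ by Lemma \ref{j_!}; as $F \in \cat U^{\perp}$, and as $j^*$ (restriction along the open immersion $j$) has the exact left adjoint $j_!$, the adjunction isomorphism gives $\Hom_{\derive{U}}\bigl((j^*E)[n], j^*F\bigr) \cong \Hom_{\derive{X}}\bigl(j_! j^*(E[n]), F\bigr) = 0$, which is the claimed vanishing after shifting.

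Next I would propagate this through the generation of $\cat U|_U$. Put $\cat Y = \{\, (j^*F)[-n] \mid n \geq 0 \,\} \subseteq \derive{U}$; since $\cat Y[-1] \subseteq \cat Y$, the left orthogonal ${}^{\perp}\cat Y$ is a cocomplete preaisle of $\derive{U}$ (it is closed under positive shifts exactly because $\cat Y[-1] \subseteq \cat Y$, and it is closed under extensions and coproducts automatically; cf.\ the proof of Lemma \ref{total}). The previous paragraph shows $j^*\cat U \subseteq {}^{\perp}\cat Y$, and since $\cat U|_U = \langle j^*\cat U \rangle^{\leq 0}$ is the \emph{smallest} cocomplete preaisle containing $j^*\cat U$, it follows that $\cat U|_U \subseteq {}^{\perp}\cat Y \subseteq {}^{\perp}\{j^*F\}$, the last inclusion by specializing to $n = 0$. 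This says exactly $j^*F \in (\cat U|_U)^{\perp}$.

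The step that is not purely formal is precisely this propagation: orthogonality against the single object $j^*F$ is not enough, because ${}^{\perp}\{j^*F\}$ need not be closed under positive shifts and so need not contain $\langle j^*\cat U\rangle^{\leq 0}$; one is forced to orthogonalize against the whole family of non-positive shifts $(j^*F)[-n]$. That is where the two hypotheses are used: the preaisle structure of $\cat U$ provides $E[n] \in \cat U$, and the \tensor preaisle condition, via Lemma \ref{j_!}, provides $j_! j^*(\cat U) \subseteq \cat U$. The remaining ingredients --- exactness of $j^*$, the identity $j^*(E[n]) = (j^*E)[n]$, and the $j_! \dashv j^*$ adjunction for an open immersion --- are routine and I would deal with them briefly.
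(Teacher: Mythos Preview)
Your proof is correct and matches the paper's approach: both use the adjunction $j_! \dashv j^*$ together with Lemma~\ref{j_!} to obtain $\Hom(j^*E, j^*F) = 0$ for all $E \in \cat U$, and then pass to the generated preaisle $\cat U|_U$. The paper leaves the propagation step implicit (simply citing that $j^*\cat U$ generates $\cat U|_U$), while you spell it out by orthogonalizing against all non-positive shifts of $j^*F$; these are equivalent, since $j^*\cat U$ is itself already closed under positive shifts (as $\cat U$ is a preaisle and $j^*$ commutes with shifts), so the closure of $j^*\cat U$ under extensions and coproducts is already a cocomplete preaisle.
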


\begin{proof}

	First we prove the following claim:  If $A \notin (\cat U|_U)^{\perp}$ then 
	there is an element $E \in \cat U$ such that $\Hom (j^*E,A) \neq 0$.  Take the collection 
	$\{ A[-n] \mid n \geq 0 \}$ and denote it by $\cat A$.  Note that 
	${}^\perp \cat A$ is a preaisle.  If for all $E \in \cat U$,  $\Hom (j^*E, A) = 0$ then the preaisle
	${}^\perp \cat A$ contains all $j^*E$ for $E \in \cat U$.  Hence,  ${}^\perp \cat A$ contains
	$\cat U|_U$.  This is a contradiction since $A \notin (\cat U|_U)^{\perp}$.  Now we proceed
	to prove the lemma.

	Suppose there is an $F \in (\cat U)^{\perp}$ such that $j^*F \notin
	(\cat U|_U)^{\perp}$ then by our claim there is an $E \in \cat U$ such that  
	$\Hom (j^*E, j^*F) \neq 0$.  By tensor-hom adjunction we get 
	
	\begin{align*}
	\Hom (j^*E, j^*F)
	& = \Hom (j^* \CO _X ,  \mathbf{R}\scr{H}om_{U}(j^*E,  j^*F)) \\
	& = \Hom (j^* \CO _X ,  j^* \mathbf{R}\scr{H}om_{X}(E,  F)).\\
	\end{align*}
	
	Now if we write $\prim F = \mathbf{R}\scr{H}om_{X}(E,  F)$ then  $\Hom (j^*E, j^*F) \neq 0$
	implies the group $\Hom (j^* \CO_X ,  j^* \prim F) \neq 0$.  Applying Lemma 
	\ref{non zero map} to the preaisle $\mathrm{Perf} ^{\leq N} _{Z} (X)$ where $N=0$ and
	$Z=X$,  we get that there is a perfect complex $K \in \mathrm{Perf} ^{\leq 0} (X)$ such that
	$\Hom (K, \prim F) \neq 0$.  Again by tensor-hom adjunction 
	\begin{align*}
	\Hom (K,  \prim F)
	& = \Hom (K ,  \mathbf{R}\scr{H}om_{X}(E,  F)) \\
	& = \Hom (K \otimes E,  F) \neq 0,\\
	\end{align*}
	since $\cat U$ is a tensor preaisle, $K \otimes E \in \cat U$ and we get a contradiction.

\end{proof}

\begin{lemma}
\label{localizes to open}
	Let $(\cat U , \cat V[1])$ be a tensor t-structure on \derive X.  Then 
	$(\cat U|_U,  \cat V|_U[1])$ 
	is 
	a t-structure on \derive U.  In particular,  for any $A \in \derive X$ the triangle 
	 \[ j^*\tau_{\cat U} ^{\leq} A \rar j^*A \rar j^*\tau _{\cat U} ^> A \rar  j^*\tau_{\cat U} 
	^{\leq} A[1],\] is a t-decomposition triangle of $j^*A$ in \derive U.
\end{lemma}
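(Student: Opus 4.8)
The plan is to verify the three t-structure axioms (t1)--(t3) for the pair $(\cat U|_U,\cat V|_U[1])$ directly, and then to read off the ``in particular'' statement from the uniqueness of t-decomposition triangles. Axiom (t1) will be immediate: by construction $\cat V|_U=(\cat U|_U)^{\perp}$, so $\Hom(A,B)=0$ for $A\in\cat U|_U$ and $B\in\cat V|_U$, and $\cat V|_U$ is precisely the shifted coaisle $(\cat V|_U[1])[-1]$ appearing in (t1). For (t2), recall that $\cat U|_U=\langle j^*\cat U\rangle^{\leq 0}$ is a cocomplete preaisle, hence closed under positive shifts; consequently $(\cat U|_U)^{\perp}$ is closed under negative shifts, which is exactly the inclusion $(\cat V|_U[1])[-1]\subset\cat V|_U$ demanded by (t2).

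The substance is in (t3). For $A\in\derive X$ I would take its t-decomposition triangle for the given tensor t-structure,
\[ \tau_{\cat U}^{\leq}A \rar A \rar \tau_{\cat U}^{>}A \rar \tau_{\cat U}^{\leq}A[1], \]
so that $\tau_{\cat U}^{\leq}A\in\cat U$ and $\tau_{\cat U}^{>}A\in\cat U^{\perp}=\cat V$. Applying the exact functor $j^*$ produces a distinguished triangle in $\derive U$ whose left term $j^*\tau_{\cat U}^{\leq}A$ lies in $j^*\cat U\subset\cat U|_U$, and whose right term $j^*\tau_{\cat U}^{>}A$ lies in $(\cat U|_U)^{\perp}=\cat V|_U$ by Lemma \ref{j!}. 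To get (t3) for an arbitrary object of $\derive U$, not only for those pulled back from $X$, I would invoke essential surjectivity of $j^*\colon\derive X\rar\derive U$: since $j$ is an open immersion of the Noetherian scheme $X$, the derived pushforward $\mathbf{R}j_*$ is defined on $\derive U$, preserves quasi-coherent cohomology, and the counit $j^*\mathbf{R}j_*\Rightarrow\mathrm{id}$ is an isomorphism, so every $T\in\derive U$ is isomorphic to $j^*A$ with $A=\mathbf{R}j_*T\in\derive X$. Once (t1)--(t3) hold, $(\cat U|_U,\cat V|_U[1])$ is a t-structure; its t-decomposition triangles are unique, and therefore the triangle displayed in the statement --- which we constructed above and which has its outer vertices in $\cat U|_U$ and $\cat V|_U$ respectively --- is the t-decomposition triangle of $j^*A$ in $\derive U$.

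The main obstacle I anticipate is not a single deep step but the bookkeeping: being careful that $\tau_{\cat U}^{>}A$ lands in the coaisle $\cat U^{\perp}$ (so that Lemma \ref{j!} applies on the nose), that the shifted coaisle $(\cat V|_U[1])[-1]$ is identified with $\cat V|_U$ consistently, and, most importantly, that $j^*$ really is essentially surjective onto $\derive U$. This last point is what upgrades the construction from objects restricted from $X$ to all objects of $\derive U$, and hence what makes $(\cat U|_U,\cat V|_U[1])$ a genuine t-structure rather than merely an orthogonal pair of subcategories.
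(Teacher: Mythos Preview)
Your proposal is correct and follows essentially the same argument as the paper: push an arbitrary object of $\derive U$ forward via $\mathbf{R}j_*$ (the paper writes $j_*$), apply the t-decomposition for $(\cat U,\cat V[1])$ on $X$, pull back by $j^*$, and use $j^*\mathbf{R}j_*\cong\mathrm{id}$ together with Lemma~\ref{j!} to place the two ends in $\cat U|_U$ and $\cat V|_U$. The paper is slightly terser about (t1) and (t2), but the content is identical.
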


\begin{proof}
	To show $(\cat U |_U , \cat V |_U[1])$ is a t-structure 
	it is enough to get a t-decomposition triangle for each $B \in \derive U$.
	For any $B \in \derive U$,  take $j_*B$ and its t-decomposition
	triangle for $(\cat U,  \cat V[1])$ 
	\[ \tau ^{\leq} _{\cat U} j_*B \rar j_*B \rar \tau ^{>} _{\cat U} j_*B \rar \tau ^{\leq} _{\cat U} 
	j_*B[1]. \]
	
	Applying $j^*$ we get a triangle in $\derive U$.   Since $j^*j_*B \cong B$ we get
	\[ j^*\tau ^{\leq} _{\cat U} j_*B \rar B \rar j^*\tau ^{>} _{\cat U} j_*B \rar j^* \tau ^{\leq} 
	_{\cat U} j_*B[1]. \]
	
	By Lemma \ref{j!} $j^*\tau ^{>} _{\cat U} j_*B \in \cat V|_U$ and by definition of $\cat U|_U$
	we have $j^*\tau ^{\leq} _{\cat U} j_*B\in \cat U|_U$,  hence the above triangle gives a 
	t-decomposition of $B$ for  $(\cat U |_U , \cat V |_U[1])$.  
	
	Now for any $A \in \derive X$ applying $j^*$ and the argument as above, we get the
	second half of the claim.
\end{proof}

\begin{lemma}
	If $\cat U$ is a compactly generated \tensor aisle of \derive X,  then the restriction $\cat 
	U|_U$ is a compactly generated \tensor aisle of \derive U.
\end{lemma}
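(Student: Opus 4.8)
The statement to prove is that if $\cat U$ is a compactly generated $\otimes$-aisle of $\derive X$, then $\cat U|_U = \langle j^*\cat U\rangle^{\leq 0}$ is again a compactly generated $\otimes$-aisle of $\derive U$ for an open affine $U \subset X$. The plan is to exhibit an explicit set of compact generators for $\cat U|_U$ obtained by restricting the compact generators of $\cat U$ along $j^*$, and then to verify the two required properties (that it is a $\otimes$-aisle, and that the restricted generators suffice) using the machinery already in place.

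First I would write $\cat U = \taisle{S}$ for a set $\cat S$ of compact objects of $\derive X$ — here I should use the tensor-generated form, which is available because by Theorem \ref{Compact generation} and Lemma \ref{Thomason subset} the aisle $\cat U$ corresponds to a filtration $\gsupp{\cat U}$ of Thomason subsets, and one may take $\cat S = \bigcup_i \mathrm{Perf}^{\leq i}_{\gsupp{\cat U}(i)}(X)$; alternatively one invokes Proposition \ref{keller tensor}. The natural candidate generating set for $\cat U|_U$ is $j^*\cat S = \{ j^*S \mid S \in \cat S\}$: each $j^*S$ is a perfect complex on $U$, hence compact in $\derive U$. Next I would apply Lemma \ref{key1}(2) to the functor $F = j^* : \derive X \to \derive U$, which is a coproduct-preserving tensor triangulated functor and is right-$t$-exact since $j^*$ preserves the vanishing of positive cohomology sheaves (it is exact on sheaves). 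Lemma \ref{key1}(2) then gives $\langle j^*\cat S\rangle^{\leq 0}_{\otimes} = \langle j^*(\taisle{S})\rangle^{\leq 0}_{\otimes} = \langle j^*\cat U\rangle^{\leq 0}_{\otimes}$. So it remains to reconcile the $\otimes$-cocomplete-preaisle generated by $j^*\cat S$ with the ordinary cocomplete preaisle $\cat U|_U = \langle j^*\cat U\rangle^{\leq 0}$.

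The key point here is that $U = \spec R$ is affine, so the preaisle $\mathbf{D}_{qc}^{\leq 0}(U) = \mathbf{D}^{\leq 0}(R)$ is generated by the unit $\CO_U = R$; hence by Proposition \ref{affine aisle} every cocomplete preaisle of $\derive U$ is automatically a $\otimes$-preaisle, and the operations $\langle-\rangle^{\leq 0}$ and $\langle-\rangle^{\leq 0}_{\otimes}$ coincide on $\derive U$. Combining this with the previous paragraph, $\cat U|_U = \langle j^*\cat U\rangle^{\leq 0} = \langle j^*\cat U\rangle^{\leq 0}_{\otimes} = \langle j^*\cat S\rangle^{\leq 0}_{\otimes} = \langle j^*\cat S\rangle^{\leq 0}$, which is the cocomplete preaisle generated by the set $j^*\cat S$ of compact objects. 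By Proposition \ref{compact characterization} (equivalence of (1) and (2)), $\cat U|_U$ is therefore a compactly generated cocomplete preaisle, hence a compactly generated aisle by Proposition \ref{keller}; and being a cocomplete preaisle of the affine $\derive U$ it is a $\otimes$-preaisle by Proposition \ref{affine aisle}, so it is a compactly generated $\otimes$-aisle. (That $\cat U|_U$ is an aisle at all also follows directly once one knows it is generated by a set of compact objects.)

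\textbf{Main obstacle.} The genuinely nontrivial input is Lemma \ref{key1}(2), whose hypothesis requires $j^*$ to be right-$t$-exact; I would make sure to note that $j^*$ restricts quasi-coherent sheaves (it is just restriction to the open $U$), is exact, and hence takes $\mathbf{D}_{qc}^{\leq 0}(X)$ into $\mathbf{D}_{qc}^{\leq 0}(U)$, so the hypothesis holds. Everything else is bookkeeping: the reduction to affine $U$ to kill the distinction between $\otimes$-preaisles and ordinary cocomplete preaisles is what makes the argument short, and the compactness of $j^*\cat S$ is immediate since restriction of a perfect complex is perfect. So there is no serious analytic difficulty — the proof is essentially an application of Lemma \ref{key1} together with the affine triviality of the tensor-compatibility condition, followed by Proposition \ref{compact characterization}.
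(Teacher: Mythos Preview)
Your proposal is correct and follows essentially the same route as the paper: restrict a compact generating set along $j^*$, note that perfect complexes restrict to perfect complexes, apply Lemma~\ref{key1}(2) to identify $\langle j^*\cat S\rangle^{\leq 0}_\otimes$ with $\langle j^*\cat U\rangle^{\leq 0}_\otimes$, and invoke Proposition~\ref{keller tensor}(1) to conclude this is an aisle. Your write-up is more explicit than the paper's one-line proof in that you spell out, via Proposition~\ref{affine aisle}, why on the affine $U$ the ordinary cocomplete preaisle $\cat U|_U = \langle j^*\cat U\rangle^{\leq 0}$ coincides with the tensor version $\langle j^*\cat U\rangle^{\leq 0}_\otimes$; the paper leaves this identification implicit.
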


\begin{proof}
	The restriction of a perfect complex is perfect hence by Lemma \ref{key1} and Proposition
	\ref{keller tensor}(1) the claim follows.
\end{proof}

\begin{lemma}
\label{affine lemma}
	Let $R$ be a Noetherian ring.  Let $A$ and $B$ be two perfect complexes on $R$.  If 
	$\Supph^{\geq i} (A) \subset \Supph^{\geq i}  (B)$  for each $i$,  then $A \in \langle B \rangle 
	^{\leq 0}$.
\end{lemma}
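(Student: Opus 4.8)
The plan is to reduce the geometric statement over the Noetherian ring $R$ to the filtration-of-supports picture already available in the affine case, and then invoke Koszul-complex generation. First I would attach to the perfect complex $B$ the filtration of supports $\psi$ defined by $\psi(j) = \Supph^{\geq j}(B)$; since $B$ is perfect over a Noetherian ring, each $\Supph^{\geq j}(B)$ is a finite union of closed subsets, hence a Thomason (equivalently, specialization closed) subset of $\spec R$, and $\psi(j+1) \subset \psi(j)$ by definition, so $\psi$ is a genuine filtration of Thomason subsets. The hypothesis $\Supph^{\geq i}(A) \subset \Supph^{\geq i}(B)$ for each $i$ says precisely that $A \in \cat U_{\psi}$, the associated subcategory of $\psi$ in the sense of Definition \ref{U phi} (using that, because $\psi$ is decreasing, membership in $\cat U_{\psi}$ is equivalent to the conditions $\Supph^{\geq i}(-) \subset \psi(i)$).

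Next I would apply Proposition \ref{compact affine}: the preaisle $\cat U_{\psi}$ is compactly generated, $\cat U_{\psi} = \preaisle{\cat K_{\psi}}$, where $\cat K_{\psi} = \bigcup_{i} \cat K^i_{\psi(i)}$ consists of shifted Koszul complexes $K(a_1,\dots,a_n)[-i]$ with $V(\langle a_1,\dots,a_n\rangle) \subset \psi(i) = \Supph^{\geq i}(B)$. So it suffices to show each such generator lies in $\langle B\rangle^{\leq 0}$; since $\langle B\rangle^{\leq 0}$ is a cocomplete preaisle and $A \in \preaisle{\cat K_\psi}$, once all generators are in $\langle B\rangle^{\leq 0}$ we get $A \in \langle B\rangle^{\leq 0}$ and we are done. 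Thus I would fix $i$ and a Koszul complex $K = K(a_1,\dots,a_n)$ with $V(\mathfrak{a}) \subset \Supph^{\geq i}(B)$, where $\mathfrak{a} = \langle a_1,\dots,a_n\rangle$, and try to build $K[-i]$ from $B$ using positive shifts, extensions, and coproducts.

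The main obstacle is precisely this last step: producing $K[-i]$ inside $\langle B\rangle^{\leq 0}$ from the support inclusion $V(\mathfrak{a}) \subset \Supph^{\geq i}(B)$. The idea I would pursue is that $\Supph^{\geq i}(B) = \Supp\big(\tau^{\geq i}B\big)$ (the cohomology in degrees $\geq i$), so $V(\mathfrak{a})$ is contained in the support of the perfect complex $\tau^{\geq i}B$; by the Hopkins--Neeman--Thomason thick-subcategory classification applied over the Noetherian ring $R$, a Koszul complex $K$ with $V(\mathfrak{a}) \subset \Supp(\tau^{\geq i}B)$ lies in the thick subcategory generated by $\tau^{\geq i}B$, hence is obtained from $\tau^{\geq i}B$ by finitely many cones, shifts (both directions), and summands. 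Since $\tau^{\geq i}B$ is a successive extension of shifts $H^j(B)[-j]$ with $j \geq i$, and $K$ itself (with a suitable normalization of the generating set, or after shifting) is concentrated in degrees $\leq 0$ while each $H^j(B)[-j][-i]$ has cohomology only in degree $j - i \leq 0$ when $j \geq i$ — wait, one must be careful here: $K[-i]$ has cohomology in degrees $[i-n, i]$, not necessarily $\leq 0$. The cleaner route is to observe that $\langle B\rangle^{\leq 0}$ contains $B$ and is closed under positive shifts and extensions, hence contains the \emph{aisle truncation} data of $B$; more robustly, I would appeal to Lemma \ref{Koszul 1} in the form that says $\cat U^i_{\psi(i)}$ is generated by $\cat K^i_{\psi(i)}$ together with the description of $\cat U^i_{\psi(i)}$ via a single-step filtration, and then it remains to check $\tau^{\leq i}(\text{stuff built from }B) $ lands where needed. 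Concretely, I expect the proof to: (i) reduce via Proposition \ref{compact affine} to generators; (ii) for each generator $K[-i]$, use the affine thick-subcategory theorem to write $K$ from $\tau^{\geq i}B$; (iii) lift this to the preaisle $\langle B\rangle^{\leq 0}$ by noting $\tau^{\geq i}B$ and all its shifts/cones built in step (ii), after applying $\tau^{\leq 0}$ or an appropriate shift, stay inside $\langle B\rangle^{\leq 0}$ because that category, being a cocomplete preaisle containing $B$, is closed under the truncation functor $\tau^{\leq 0}_{\langle B\rangle^{\leq 0}}$ and under the operations used. The delicate point, and the one I would spend the most care on, is matching degrees so that the purely triangulated manipulations over $R$ stay compatible with the one-sided (aisle) constraint — i.e. showing that the passage from "$K \in \mathrm{thick}(\tau^{\geq i}B)$" to "$K[-i] \in \langle B\rangle^{\leq 0}$" does not require negative shifts that would escape the preaisle.
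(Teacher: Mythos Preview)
The paper does not give its own proof of this lemma; it simply cites \cite[Proposition 5.1]{Hrbek}.

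Your proposal has a genuine gap precisely at the step you yourself flag as delicate. Having reduced to showing that each Koszul generator $K[-i]$ with $V(\mathfrak a)\subset\Supph^{\geq i}(B)$ lies in $\langle B\rangle^{\leq 0}$, you propose to invoke the Hopkins--Neeman thick subcategory theorem to place $K$ inside $\mathrm{thick}(\tau^{\geq i}B)$. But thick subcategories are closed under negative shifts, and there is no mechanism to ensure that building $K$ from $\tau^{\geq i}B$ avoids them; the passage from ``$K\in\mathrm{thick}(\tau^{\geq i}B)$'' to ``$K[-i]\in\langle B\rangle^{\leq 0}$'' is simply not justified. Worse, $K[-i]$ is itself a perfect complex satisfying $\Supph^{\geq j}(K[-i])\subset\Supph^{\geq j}(B)$ for every $j$, so what remains is a special case of the very lemma you are proving --- the reduction to Koszul generators has not actually simplified anything. (A further technical issue: $\tau^{\geq i}B$ need not be perfect, so the thick subcategory theorem for $\perfect R$ does not apply to it as stated.)

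The difficulty disappears if you reverse the direction of the argument. You already have $B\in\cat U_\psi$, hence $\langle B\rangle^{\leq 0}\subset\cat U_\psi$ since the latter is a cocomplete preaisle. Both sides are compactly generated aisles of $\mathbf{D}(R)$, and by the affine classification \cite[Theorem 3.11]{AJS10} such aisles are determined by their graded support. The graded support of $\langle B\rangle^{\leq 0}$ is contained in $\psi$ (from the inclusion just established) and contains $\psi$ (since $B$ belongs to it), so the two aisles coincide and $A\in\cat U_\psi=\langle B\rangle^{\leq 0}$. In short: rather than trying to push Koszul complexes into $\langle B\rangle^{\leq 0}$, recognize $\langle B\rangle^{\leq 0}$ itself as a $\cat U_\phi$ via the classification and compute that $\phi=\psi$.
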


\begin{proof}
	\cite[Proposition 5.1]{Hrbek}
\end{proof}

\begin{lemma}
\label{Lemma 2}
	Let $A$ and $B$ be 
	two perfect complexes on $X$.  If 
	$\Supph^{\geq i} (A) \subset \Supph^{\geq i}  (B)$  for each $i$,  then $A \in \langle B \rangle 
	^{\leq 0} _{\otimes}$.
\end{lemma}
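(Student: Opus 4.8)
The plan is to exhibit $\langle B\rangle^{\leq 0}_{\otimes}$ as a \tensor aisle of $\derive X$, hence as the aisle of a tensor t-structure, and then to prove $A$ lies in it by a local-global argument that reduces everything to the affine statement Lemma \ref{affine lemma}. First I would set $\cat U=\langle B\rangle^{\leq 0}_{\otimes}$ and check that $\cat U$ is an aisle. The category $\derive X$ is rigidly compactly generated, and the preaisle $\mathbf{D}_{qc}^{\leq 0}(X)$ is compactly generated: this last point is Theorem \ref{Compact generation} applied to the filtration $\phi$ with $\phi(i)=X$ for $i\leq 0$ and $\phi(i)=\emptyset$ for $i>0$, whose associated subcategory is precisely $\mathbf{D}_{qc}^{\leq 0}(X)$. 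Hence Proposition \ref{keller tensor}(1) applies and $\cat U$ is a \tensor aisle; write $(\cat U,\cat V[1])$ for the associated tensor t-structure and $\tau_{\cat U}^{\leq},\tau_{\cat U}^{>}$ for its truncation functors.

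The heart of the argument is a local identification: for every open affine $U=\spec R$ with open immersion $j\colon U\to X$, I claim $\cat U|_U=\langle B|_U\rangle^{\leq 0}$ in $\derive U$. Indeed $\cat U|_U=\langle j^{*}\cat U\rangle^{\leq 0}$ by definition; since $\CO_U$ generates $\mathbf{D}^{\leq 0}(R)$, Proposition \ref{affine aisle} shows that in $\derive U$ the constructions $\langle-\rangle^{\leq 0}$ and $\langle-\rangle^{\leq 0}_{\otimes}$ coincide, while $j^{*}$, being a coproduct-preserving, right-t-exact tensor triangulated functor, satisfies $\langle j^{*}(\langle B\rangle^{\leq 0}_{\otimes})\rangle^{\leq 0}_{\otimes}=\langle j^{*}B\rangle^{\leq 0}_{\otimes}$ by Lemma \ref{key1}(2); combining these gives $\cat U|_U=\langle B|_U\rangle^{\leq 0}$. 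Now $A|_U$ and $B|_U$ are perfect complexes on the Noetherian ring $R$ and $\Supph^{\geq i}(A|_U)=\Supph^{\geq i}(A)\cap U\subset\Supph^{\geq i}(B)\cap U=\Supph^{\geq i}(B|_U)$ for each $i$, so Lemma \ref{affine lemma} yields $A|_U\in\langle B|_U\rangle^{\leq 0}=\cat U|_U$.

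It remains to glue. Consider the t-decomposition triangle $\tau_{\cat U}^{\leq}A\to A\to\tau_{\cat U}^{>}A\to\tau_{\cat U}^{\leq}A[1]$ of $A$; by Lemma \ref{localizes to open}, applying $j^{*}$ produces a t-decomposition triangle of $A|_U$ for the t-structure $(\cat U|_U,\cat V|_U[1])$ on $\derive U$. Since $A|_U\in\cat U|_U$ by the previous paragraph, uniqueness of t-decompositions forces $j^{*}\tau_{\cat U}^{>}A\cong 0$, i.e. $(\tau_{\cat U}^{>}A)|_U\cong 0$. As $U$ ranges over an open affine cover of $X$, every cohomology sheaf of $\tau_{\cat U}^{>}A$ vanishes on that cover, hence $\tau_{\cat U}^{>}A\cong 0$ and therefore $A\cong\tau_{\cat U}^{\leq}A\in\cat U=\langle B\rangle^{\leq 0}_{\otimes}$. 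I expect the main obstacle to be the middle paragraph: verifying that restriction to an open affine carries the \tensor aisle $\cat U$ onto $\langle B|_U\rangle^{\leq 0}$, which is exactly where the tensor compatibility is genuinely used (through Lemma \ref{key1} and Proposition \ref{affine aisle}) and where the affine classification of Hrbek is brought to bear; the remaining steps are routine given the machinery of Sections \ref{section 1}--\ref{section 3}.
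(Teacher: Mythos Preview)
Your proof is correct and follows essentially the same local-global strategy as the paper: take the t-decomposition of $A$ for the \tensor aisle $\langle B\rangle^{\leq 0}_{\otimes}$, restrict to open affines via Lemma~\ref{localizes to open}, and invoke Lemma~\ref{affine lemma} to see the truncation map is an isomorphism locally and hence globally. The paper's proof is terser and leaves implicit the points you make explicit---that $\langle B\rangle^{\leq 0}_{\otimes}$ is indeed an aisle (via Proposition~\ref{keller tensor}) and that its restriction to $U$ is exactly $\langle B|_U\rangle^{\leq 0}$ (via Lemma~\ref{key1} and Proposition~\ref{affine aisle})---but the argument is the same.
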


\begin{proof}
	Consider the t-decomposition of $A$ with respect to the aisle $ \langle B \rangle ^{\leq 0} 
	_{\otimes}$
	\[ \tau_B ^{\leq} A \rar A \rar \tau_B ^> A \rar \tau_B ^{\leq} A [1].\]
	
	Let $U$ be an open affine subset of $X$, and $j: U \rar X$ be the open immersion.  Applying
	$j^*$ to the above triangle we get 
	\[ j^*\tau_B ^{\leq} A \rar j^*A \rar j^*\tau_B ^> A \rar j^*\tau_B^{\leq} A [1].\]
	
	Now using Lemma \ref{localizes to open} and the Lemma \ref{affine lemma} 
	we conclude the map $j^*\tau_B ^{\leq} A \rar j^*A$ is an isomorphism.   As the map 
	$ \tau_B ^{\leq} A 
	\rar A$ is locally an isomorphism on every open affine subset of $X$,  it is an  isomorphism.  
	This proves $A \in \langle B \rangle ^{\leq 0} _\otimes$.
	
\end{proof}

\begin{lemma}
\label{Lemma 1}
	Let $Z$ be a closed subset of $X$.  Then there is a perfect complex 
	$E \in \mathrm{Perf} ^{\leq 0} _Z (X)$ such that $\Supp (H^0(E)) = Z$.  
\end{lemma}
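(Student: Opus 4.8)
The plan is to realize $E$ as a finite direct sum of perfect complexes, one attached to each piece of an affine open cover of $X$, using the extension machinery of Section \ref{Ext} to push local Koszul complexes up to $X$ without enlarging their support beyond $Z$. First I would fix a finite affine open cover $X = V_1 \cup \dots \cup V_n$, which exists since $X$ is Noetherian, hence quasi-compact. Writing $R_i = \Gamma(V_i, \CO_X)$, we have $Z \cap V_i = V(I_i)$ for a finitely generated ideal $I_i = (a_{i1}, \dots, a_{i m_i})$ of the Noetherian ring $R_i$; let $F_i$ be the Koszul complex $K(a_{i1}, \dots, a_{i m_i})$ on $V_i$. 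It is a perfect complex with cohomology concentrated in degrees $-m_i, \dots, 0$, with $H^0(F_i) \cong R_i/I_i$ and $\Supph(F_i) = V(I_i) = Z \cap V_i$. In particular $F_i \in \mathrm{Perf}^{\leq 0}_{Z \cap V_i}(V_i)$ and $\Supp(H^0(F_i)) = Z \cap V_i$.

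Next I would apply Proposition \ref{K + K} with $U = V_i$, $N = 0$ and $F = F_i$ to obtain a positive integer $n_i$ together with a perfect complex $G_i \in \mathrm{Perf}^{\leq 0}_Z(X)$ satisfying
\[ G_i|_{V_i} \cong \bigoplus_{k=0}^{n_i} \left( F_i[k]^{\oplus \binom{n_i}{k}} \right). \]
Since $F_i$ has no cohomology in positive degrees, $H^0(F_i[k]) = H^k(F_i) = 0$ for every $k \geq 1$, so the only summand on the right-hand side that survives in degree zero is $H^0(F_i)$; thus $H^0(G_i)|_{V_i} \cong H^0(F_i)$, which gives $\Supp(H^0(G_i)) \cap V_i = Z \cap V_i$, while $\Supp(H^0(G_i)) \subset \Supph(G_i) \subset Z$. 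Now set $E = \bigoplus_{i=1}^n G_i$. Being a finite direct sum of perfect complexes in $\mathbf{D}_{qc}^{\leq 0}(X)$ with $\Supph(E) = \bigcup_i \Supph(G_i) \subset Z$, the complex $E$ lies in $\mathrm{Perf}^{\leq 0}_Z(X)$; and since cohomology commutes with finite direct sums, $\Supp(H^0(E)) = \bigcup_i \Supp(H^0(G_i))$, which is contained in $Z$ and contains $\bigcup_i (Z \cap V_i) = Z$. Hence $\Supp(H^0(E)) = Z$, as required.

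I do not expect a genuine obstacle here once Section \ref{Ext} is available: the one point that needs attention is that Proposition \ref{K + K} returns an extension of the shifted sum $\bigoplus_k F_i[k]^{\oplus \binom{n_i}{k}}$ rather than of $F_i$ itself, so one must check that the shifts with $k \geq 1$ do not contribute to $H^0$ — which is precisely why the local model $F_i$ must be chosen with cohomology in non-positive degrees. Granting that, the passage from ``total support contained in $Z$'' to ``$H^0$-support equal to $Z$'' is immediate from the affine computation $\Supp(H^0(F_i)) = Z \cap V_i$ and the fact that the $V_i$ cover $X$.
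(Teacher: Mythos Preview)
Your proof is correct and follows essentially the same approach as the paper: take a finite affine open cover, build Koszul complexes locally with the right support, extend each to $X$ via Proposition~\ref{K + K}, and sum. Your write-up is in fact more careful than the paper's, which leaves the verification of $\Supp(H^0(E))=Z$ to the reader; your observation that the positive shifts $F_i[k]$ contribute nothing to $H^0$ is exactly the point that makes this work.
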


\begin{proof}
	Consider an open affine cover of $X$  say
	$\{ U_{\alpha} = \mathrm{Spec} R_{\alpha} \}$.  We take a Koszul complex $K_{\alpha}$
	 on $R_{\alpha}$ such that $\Supph(K_{\alpha}) = Z \cap U_{\alpha}$.  By Proposition 
	 \ref{K + K} there is an extension of $\oplus _i (K_{\alpha}[i])^{\oplus \binom {n} {i}}$;  
	 say $E_{\alpha}$.  Now take $E = \bigoplus_{\alpha} E_{\alpha}$,  it is easy to check 
	$\Supp (H^0(E)) = Z$, and by our construction $E \in \mathrm{Perf} ^{\leq 0} _Z (X)$.
\end{proof}

We denote the collection of Thomason filtrations of $X$ by \fil X and the collection 
of compactly generated \tensor aisles of $\derive X$ by \caisle X.

\begin{theorem}
\label{Theorem 1}
	Let $X$ be a Noetherian scheme.  There is a one-to-one 
	correspondence between \fil X  and  \caisle X.   More precisely,  the maps
	
	\[ \Phi: \caisle X \rar \fil X,\]
	\[ \cat U \mapsto \gsupp {\cat U}, \hspace{5mm} (\ref{Phi U}),\]

	\[ \Psi : \fil X \to \caisle X,\]
	 \[\phi \mapsto \cat U _{\phi},\hspace{5mm} (\ref{U phi}),\] 
	are inverse of each other.

\end{theorem}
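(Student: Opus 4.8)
The plan is to verify that $\Phi$ and $\Psi$ are well-defined and mutually inverse. Well-definedness of $\Psi$ is already in hand: Theorem \ref{Compact generation} says that for a filtration $\phi$ of Thomason subsets the subcategory $\cat U_\phi$ is a compactly generated \tensor preaisle, hence (being an aisle by Proposition \ref{keller tensor} or Proposition \ref{compact characterization}) a compactly generated \tensor aisle, so $\cat U_\phi \in \caisle X$. Well-definedness of $\Phi$ is Lemma \ref{Thomason subset}: for a compactly generated \tensor aisle $\cat U$, the graded support $\gsupp{\cat U}$ is a filtration of Thomason subsets. So it remains to check the two round-trip identities.

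First I would show $\Phi \circ \Psi = \mathrm{id}$, i.e. $\gsupp{\cat U_\phi} = \phi$ for every filtration $\phi$ of Thomason subsets. The inclusion $\gsupp{\cat U_\phi}(i) \subset \phi(i)$ is immediate from the definition of $\cat U_\phi$, since every $E \in \cat U_\phi$ satisfies $\Supp(H^i(E)) \subset \phi(i)$. For the reverse inclusion $\phi(i) \subset \gsupp{\cat U_\phi}(i)$, I would use Lemma \ref{Lemma 1}: given $x \in \phi(i)$, the set $\phi(i)$ is a Thomason (here, specialization-closed) subset, so it contains the closed set $\overline{\{x\}}$; applying Lemma \ref{Lemma 1} to a closed subset $Z$ with $x \in Z \subset \phi(i)$ produces a perfect complex $E \in \mathrm{Perf}^{\leq 0}_Z(X)$ with $\Supp(H^0(E)) = Z$. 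Then $E[-i]$ has its $i$-th cohomology supported on $Z \subset \phi(i)$ and is concentrated in degrees $\leq i$ and supported in $\phi(i)$ in every degree (in fact it is only in degree $i$), so $E[-i] \in \cat U_\phi$; since $x \in \Supp(H^i(E[-i]))$, we get $x \in \gsupp{\cat U_\phi}(i)$. (One should take a little care that $\phi(j) \supset \phi(i) \supset Z$ for $j \le i$ and $H^j(E[-i]) = 0$ for $j \ne i$, so the membership $E[-i] \in \cat U_\phi$ really does hold.)

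Next I would show $\Psi \circ \Phi = \mathrm{id}$, i.e. $\cat U_{\gsupp{\cat U}} = \cat U$ for every compactly generated \tensor aisle $\cat U$. The inclusion $\cat U \subset \cat U_{\gsupp{\cat U}}$ is formal: if $E \in \cat U$ then $\Supp(H^i(E)) \subset \gsupp{\cat U}(i)$ by the very definition of $\gsupp{\cat U}$, so $E \in \cat U_{\gsupp{\cat U}}$. The substantive direction is $\cat U_{\gsupp{\cat U}} \subset \cat U$. Write $\cat U = \taisle{\cat S}$ for a set of compact (perfect) generators $\cat S$; by Lemma \ref{support} we have $\gsupp{\cat U}(i) = \bigcup_{S \in \cat S}\Supph^{\geq i}(S)$. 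Now take any $A \in \cat U_{\gsupp{\cat U}}$. I would argue $A \in \cat U$ by checking it locally on an open affine cover, using Lemma \ref{localizes to open} to reduce membership in a \tensor aisle to a question on open affines, and there invoking Lemma \ref{affine lemma} / Lemma \ref{Lemma 2}: the support condition $\Supp(H^i(A)) \subset \gsupp{\cat U}(i) = \bigcup_S \Supph^{\geq i}(S)$ should let one build $A$ out of the generators. Concretely, I would first treat the case where $A$ itself is perfect: then for each $i$, $\Supph^{\geq i}(A) \subset \gsupp{\cat U}(i)$, and a compactness/quasi-compactness argument shows $\Supph^{\geq i}(A)$ is covered by finitely many of the closed sets $\Supph^{\geq i}(S)$; assembling finitely many $S$'s into a single perfect complex $B \in \cat S$-generated aisle with $\Supph^{\geq i}(A) \subset \Supph^{\geq i}(B)$ for all $i$, Lemma \ref{Lemma 2} gives $A \in \taisle{B} \subset \cat U$. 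For general $A \in \cat U_{\gsupp{\cat U}}$, I would approximate by its truncations / pass through the triangle structure, or equivalently restrict to open affines, apply the affine classification (Proposition \ref{compact affine}, Lemma \ref{affine lemma}) together with Lemma \ref{non zero map} and Lemma \ref{j!}, and use that $A$ lies in $\cat U$ iff $\tau^{>}_{\cat U} A \cong 0$, which can be tested locally.

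\textbf{Main obstacle.} I expect the hard part to be the inclusion $\cat U_{\gsupp{\cat U}} \subset \cat U$ for non-perfect $A$ — the passage from the perfect case to arbitrary objects of the cocomplete aisle. The perfect case is essentially Lemma \ref{Lemma 2} once one matches up the graded supports, but an arbitrary $A$ in the aisle need not be a coproduct of shifts of generators in any obvious bounded way; one must control the t-decomposition $\tau^{\leq}_{\cat U} A \to A \to \tau^{>}_{\cat U} A$ and show the cone $\tau^{>}_{\cat U} A$ vanishes. The natural route is to show $\tau^{>}_{\cat U} A$ lies in $\cat U^\perp$ (automatic) and also in $\cat U_{\gsupp{\cat U}}$ — equivalently that its cohomology is still supported within $\gsupp{\cat U}$ — which requires knowing the truncation functor does not enlarge graded supports; for that one again reduces to open affines via Lemma \ref{localizes to open} and Lemma \ref{j!}, where the affine statement is known. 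So the real work is organizing the local-to-global argument carefully, exactly as in the proof of Lemma \ref{Lemma 2} and Theorem \ref{Compact generation}, rather than any new idea.
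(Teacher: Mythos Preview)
Your proposal is correct and follows the paper's approach for well-definedness, for $\Phi\circ\Psi=\mathrm{id}$, and for the perfect case of $\Psi\circ\Phi=\mathrm{id}$; the use of Lemma \ref{Lemma 1}, Lemma \ref{support}, the finite-subcover/constructibility step, and Lemma \ref{Lemma 2} are exactly what the paper does.

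Where you diverge is in treating your ``main obstacle'' as an obstacle at all. The paper does not attempt to show directly that an arbitrary $A\in\cat U_{\gsupp{\cat U}}$ lies in $\cat U$; it only checks this for perfect $A$ and then stops. The reason this suffices is that $\cat U_{\gsupp{\cat U}}=\cat U_{\phi}$ with $\phi=\gsupp{\cat U}$, and by Theorem \ref{Compact generation} (more precisely its proof) one has $\cat U_{\phi}=\langle\cat S_{\phi}\rangle^{\leq 0}$ for a set $\cat S_{\phi}$ of perfect complexes. Once you know every perfect object of $\cat U_{\phi}$ lies in $\cat U$, in particular $\cat S_{\phi}\subset\cat U$, and since $\cat U$ is a cocomplete preaisle you get $\cat U_{\phi}=\langle\cat S_{\phi}\rangle^{\leq 0}\subset\cat U$ for free. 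So the passage from perfect to general objects is a one-line consequence of compact generation of the target aisle, not a separate local-to-global argument. Your proposed alternative---testing $\tau^{>}_{\cat U}A\cong 0$ on open affines via Lemma \ref{localizes to open} and the affine classification---would also work (indeed, this is precisely the mechanism the paper uses later in the proof of Theorem \ref{telescope conjecture}), but it is more than what is needed here.
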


\begin{proof}
	The maps are well defined by Lemma \ref{Thomason subset} and Theorem 
	\ref{Compact generation}.
	First, we show $\Phi \circ \Psi =$ Id. 
 	Given $\phi \in \fil X$ we have $\phi (i) = \bigcup _{\lambda} Z_{\lambda}$ where 
 	$Z_{\lambda}$'s are closed subsets of $X$.  For a fixed $Z_\lambda$ by Lemma 
 	\ref{Lemma 1}
 	we have a perfect complex $E_\lambda$ and $E_\lambda [i] \in \Psi (\phi)$,  this
 	proves the claim.
 	
 	To show $\Psi \circ \Phi$ = Id,  we first observe $\cat U \subset \Psi \circ \Phi(\cat U)$.
 	For the reverse inclusion, we will show that any compact object $A \in \Psi \circ \Phi(\cat U)$
 	is in $\cat  U$.  By Lemma \ref{support} we have compact objects $\{B_\alpha\} \subset
 	 \cat U$ such 
 	that $\Supph^{\geq i} (A) \subset \bigcup_\alpha \Supph^{\geq i} ( B_\alpha)$.  
 	
 	Since $A$ is perfect, $\Supph^{\geq i} (A)$ is a closed subset of $X$.  As $X$ is Noetherian
 	there are finitely many irreducible components of $\Supph^{\geq i} (A)$ say 
 	$Z_1$,  $Z_2$,  $\cdots, Z_n$.  Now, we take $B_i$ such that the generating point of
 	$Z_i$ lies in $\Supph^{\geq i} ( B_i)$.  Therefore,
 	we have a finite collection $\{B_i\}^n_{i = 1}$ of $\{B_\alpha\}$ such that 
	$\Supph^{\geq i} (A) \subset \bigcup_{i = 1} ^n \Supph^{\geq i} ( B_i)$.  So 
	$\Supph^{\geq i} (A) \subset \Supph^{\geq i} (\oplus_{i =1} ^n B_i)$, since
	 $\oplus_{i =1} ^n B_i \in \cat U$,  by
	Lemma \ref{Lemma 2} we conclude $A \in \cat U$.  This proves the theorem. 
 	
\end{proof}

\begin{remark}
\label{remark 1}
	From \cite[Theorem 4.10]{SP16}, it follows that, for a separated Noetherian scheme, there is 
	a natural bijection between the set of thick preaisles of \perfect X and the set of 
	compactly generated t-structures on $\mathbf{D}(\mathrm{Qcoh}(X))$.  By
	Proposition \ref{keller tensor} and its proof it can be easily deduced that the bijection 
	of \cite{SP16} restricts to a bijection 
	between the set of thick \tensor preaisles of \perfect X and the set of compactly generated 
	tensor t-structures on $\mathbf{D}(\mathrm{Qcoh}(X))$.  
\end{remark}

\begin{remark}
\label{remark 2}
	Theorem \ref{Theorem 1} together with Remark \ref{remark 1} says there is a bijective 
	correspondence between the set of thick \tensor preaisles of \perfect X and the set 
	of Thomason filtrations of $X$.  Therefore,  Theorem \ref{Theorem 1} can be 
	thought of as a generalization of \cite[Theorem 3.15]{Thomason} to thick \tensor preaisle,  at 
	least for separated Noetherian schemes.  
\end{remark}

\section{Tensor telescope conjecture for t-structures}

\label{section 5}

Let \cat G be a Grothendieck abelian category.  A subcategory \cat V of $\mathbf{D}(\cat G)$ is 
\emph{closed under homotopy colimits} if for any directed system $\{A_i\}$ in 
$\mathbf{C}(\cat G)$ with all $A_i$ in \cat V,  the colimit of the directed system $\{A_i\}$ in 
$\mathbf{C}(\cat G)$  belongs to \cat V;  see \cite[Fact 2.1]{HN21},  for the definition in the 
general setting of derivators see \cite[A.1]{HN21}.  A t-structure $(\cat U,  \cat V[1])$ on 
$\mathbf{D}(\cat G)$ is \emph{homotopically smashing} if the coaisle \cat V is closed under 
homotopy colimits.  Recall that we say a t-structure is smashing if the coaisle is closed under 
coproducts.

\begin{proposition}
	Every homotopically smashing t-structure on $\mathbf{D}(\cat G)$ is smashing.  
\end{proposition}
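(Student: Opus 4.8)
The plan is to realize an arbitrary coproduct in $\mathbf{D}(\cat G)$ as a homotopy colimit of a directed system assembled from \emph{finite} partial sums, and then to apply the hypothesis directly. Fix a family $\{A_i\}_{i\in I}$ of objects of the coaisle $\cat V$; since $\mathbf{D}(\cat G)$ is a localization of $\mathbf{C}(\cat G)$ we may replace each $A_i$ by a representing complex and assume $A_i \in \mathbf{C}(\cat G)$. The first thing I would record is that $\cat V$ is closed under finite coproducts: the coaisle of a t-structure contains $0$ and is closed under extensions — given a triangle $B' \to B \to B'' \to B'[1]$ with $B',B'' \in \cat V$ and any $A \in \cat U$, the long exact sequence $\Hom(A,B') \to \Hom(A,B) \to \Hom(A,B'')$ forces $\Hom(A,B) = 0$ — and $A \oplus B$ sits in the triangle $A \to A\oplus B \to B \to A[1]$.

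Next I would let $\Lambda$ be the poset of finite subsets $J \subseteq I$ ordered by inclusion; it is directed, since the union of two finite sets is finite. Consider the directed system $\{A_J\}_{J \in \Lambda}$ in $\mathbf{C}(\cat G)$ with $A_J \colonequals \bigoplus_{i \in J} A_i$ and transition maps the evident inclusions. By the previous paragraph each $A_J$ lies in $\cat V$. The colimit of this directed system, computed in $\mathbf{C}(\cat G)$, is the coproduct $\bigoplus_{i \in I} A_i$: in any cocomplete category a coproduct is the filtered colimit of its finite sub-coproducts, and in $\mathbf{C}(\cat G)$ all of this is computed degreewise in $\cat G$. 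Hence, since the t-structure $(\cat U,\cat V[1])$ is homotopically smashing, we conclude $\bigoplus_{i\in I} A_i \in \cat V$.

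Finally I would invoke the standard fact that, because $\cat G$ is a Grothendieck category and coproducts in $\cat G$ are exact, a coproduct of complexes computes the coproduct in the derived category; thus the object $\bigoplus_{i\in I} A_i$ just shown to lie in $\cat V$ represents $\coprod_{i\in I} A_i$ in $\mathbf{D}(\cat G)$. Therefore $\cat V$ is closed under coproducts, i.e. the t-structure is smashing.

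The only point that is not pure formalism — and hence the step to be careful with — is the identification of the $\mathbf{C}(\cat G)$-colimit of the system $\{A_J\}$ with the triangulated coproduct $\coprod_{i\in I} A_i$ in $\mathbf{D}(\cat G)$; this rests on the exactness of coproducts in $\cat G$ (so that coproducts of quasi-isomorphisms in $\mathbf{C}(\cat G)$ remain quasi-isomorphisms), which is precisely where the Grothendieck hypothesis enters. Everything else is a short manipulation of triangles together with the definition of \enquote{closed under homotopy colimits}.
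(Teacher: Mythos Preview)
Your argument is correct. The coaisle $\cat V = \cat U^{\perp}$ is closed under extensions and contains $0$, hence under finite coproducts; the directed system of finite sub-sums has colimit in $\mathbf{C}(\cat G)$ equal to the full coproduct; and AB4 in a Grothendieck category ensures this computes the coproduct in $\mathbf{D}(\cat G)$. Each step is sound.

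As for comparison: the paper does not give its own proof of this proposition but simply cites \cite[Proposition 7.2]{SSV21}. Your write-up therefore supplies what the paper outsources. The argument you give is the expected one and is almost certainly what lies behind the cited reference, so there is no genuine divergence of method to report --- you have just made explicit a proof the authors chose to delegate.
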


\begin{proof}
	\cite[Proposition 7.2]{SSV21}.
\end{proof}

\begin{remark}
\label{smashing ideal}
	Smashing t-structures in general are not homotopically smashing see \cite[Example 8.2]
	{SSV21}.  However,  in the case of stable t-structures these two notions coincide; see
	\cite[A.5]{HN21}.  
\end{remark}

\begin{proposition}
\label{smashing}
	Every compactly generated t-structure on $\mathbf{D}(\cat G)$ is homotopically smashing.  
\end{proposition}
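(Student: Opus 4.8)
The plan is to run the same compactness argument that shows compactly generated t-structures are smashing, now with directed homotopy colimits in place of coproducts. Write the given t-structure as $(\cat U, \cat V[1])$, so its aisle is $\cat U = \preaisle S$ for some set $\cat S$ of compact objects of $\mathbf{D}(\cat G)$. By Proposition \ref{compact characterization}, and more precisely by the identification $\cat S[\N]^{\perp} = \cat U^{\perp}$ recorded in its proof, the coaisle is $\cat V = \cat U^{\perp} = \cat S[\N]^{\perp}$: an object $B$ of $\mathbf{D}(\cat G)$ lies in $\cat V$ exactly when $\Hom(S[n], B) = 0$ for every $S \in \cat S$ and every $n \in \N \cup \{0\}$. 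Thus $\cat V = \bigcap_{S \in \cat S,\, n \geq 0} \ker \Hom(S[n], -)$ is an intersection of kernels of functors corepresented by compact objects.

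Now I would take a directed system $\{A_i\}$ in $\mathbf{C}(\cat G)$ with all $A_i \in \cat V$ and set $A = \varinjlim_i A_i$, the colimit computed in $\mathbf{C}(\cat G)$; the goal is to show $A \in \cat V$, that is, $\Hom_{\mathbf{D}(\cat G)}(S[n], A) = 0$ for all $S \in \cat S$ and $n \geq 0$. Since $\cat G$ is Grothendieck, this colimit represents the homotopy colimit of $\{A_i\}$ in $\mathbf{D}(\cat G)$ (cf.\ \cite[Fact 2.1]{HN21}) and sits in a telescope triangle
\[ \coprod_i A_i \xrightarrow{\,1 - \sigma\,} \coprod_i A_i \longrightarrow A \longrightarrow \coprod_i A_i[1] \]
in $\mathbf{D}(\cat G)$, where $\sigma$ is assembled from the transition maps of the system. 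Applying $\Hom_{\mathbf{D}(\cat G)}(C, -)$ with $C = S[n]$ and using compactness of $C$ to pull coproducts out, namely $\Hom(C, \coprod_i A_i) \cong \coprod_i \Hom(C, A_i)$, the long exact sequence of the triangle — together with injectivity of $1 - \sigma$ on the shifted coproduct of $\Hom$-groups and exactness of filtered colimits of abelian groups — gives $\Hom_{\mathbf{D}(\cat G)}(C, A) \cong \varinjlim_i \Hom_{\mathbf{D}(\cat G)}(C, A_i)$. Since each $A_i \in \cat V = \cat S[\N]^{\perp}$ we have $\Hom(S[n], A_i) = 0$ for all $i$, hence $\Hom(S[n], A) = \varinjlim_i 0 = 0$ and $A \in \cat S[\N]^{\perp} = \cat V$. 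So $\cat V$ is closed under homotopy colimits, i.e.\ the t-structure is homotopically smashing.

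The step I expect to require the most care — and the one I would invoke rather than reprove — is that, for a Grothendieck category $\cat G$, the ordinary colimit of a directed system of complexes models the homotopy colimit in $\mathbf{D}(\cat G)$ and admits the telescope presentation above, including for non-sequential directed index sets (where the telescope is indexed by the arrows of the index category and the $\Hom$-computation is unchanged); this is a standard fact, implicit already in the definition of a coaisle being closed under homotopy colimits that we are using. Granting it, the rest is the purely formal statement that maps out of a compact object commute with such homotopy colimits, which is the exact analogue of the coproduct computation already carried out for smashing t-structures.
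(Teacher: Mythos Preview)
The paper does not give its own proof here: it simply cites \cite[Proposition 7.2]{SSV21}. Your argument is a correct reconstruction of the standard proof behind that citation, namely that the coaisle of a compactly generated t-structure is $\cat S[\N]^{\perp}$ and hence closed under any construction with which maps out of compact objects commute. The only genuinely nontrivial input is the one you yourself flag at the end: that for a Grothendieck category $\cat G$, the directed colimit in $\mathbf{C}(\cat G)$ models the homotopy colimit and that $\Hom_{\mathbf{D}(\cat G)}(C,-)$ commutes with such directed homotopy colimits for compact $C$, including over non-sequential index categories; this is precisely the content of the derivator-theoretic results in \cite{SSV21} (and is not quite a consequence of the naive telescope triangle alone in the general directed case). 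So your write-up is sound, and where it leans on an external fact it leans on exactly the same one the paper cites.
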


\begin{proof}
	\cite[Proposition 7.2]{SSV21}.
\end{proof}

The telescope conjecture for t-structures asks if every homotopically smashing t-structure on 
$\mathbf{D}(\cat G)$ is 
compactly generated,  that is,  if the converse of Proposition \ref{smashing} is true.  When 
\cat G is $\mathrm{Mod}$-$R$ for $R$ Noetherian ring, Hrbek and Nakamura have proved the 
following:

 \begin{theorem}[\text{\cite[Theorem 1.1]{HN21}}]
 \label{telescope affine}
 	Any homotopically smashing t-structure on $\mathbf{D} (R)$ is 
 	compactly generated.  
 \end{theorem}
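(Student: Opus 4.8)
The plan is to identify the given homotopically smashing t-structure with the compactly generated t-structure attached to its graded cohomological support, and to extract the "compact approximations" this requires from the homotopically smashing hypothesis.

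First I would set $\phi = \gsupp{\cat U}$ (Definition \ref{Phi U}) for the given homotopically smashing t-structure $(\cat U, \cat V[1])$ on $\mathbf{D}(R)$. Each $\phi(i)$ is a union of supports of modules, hence specialization closed, and $\phi(i+1)\subseteq\phi(i)$ since $\cat U$ is closed under positive shifts; as $R$ is Noetherian this makes $\phi$ a filtration of Thomason subsets, so Proposition \ref{compact affine} (\cite[Theorem 3.10]{AJS10}) shows that the associated aisle $\cat U_\phi$ (Definition \ref{U phi}) is compactly generated, namely $\cat U_\phi = \langle \cat K_\phi\rangle^{\leq 0}$ with $\cat K_\phi = \bigcup_i \cat K^i_{\phi(i)}$ the associated set of Koszul generators. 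From the definition of $\phi$ one has $\cat U\subseteq\cat U_\phi$, so everything comes down to the reverse inclusion; since $\cat U$ is a cocomplete preaisle, $\cat U_\phi\subseteq\cat U$ is equivalent to $K(a_1,\dots,a_n)[-i]\in\cat U$ whenever $V(\langle a_1,\dots,a_n\rangle)\subseteq\phi(i)$.

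Next I would reduce this membership to a support-theoretic statement about perfect complexes in $\cat U$. Writing $Z = V(\langle a_1,\dots,a_n\rangle)$ and using that the Koszul complex lies in degrees $[-n,0]$ with $H^0 = R/\langle a_1,\dots,a_n\rangle$ and all cohomology supported in $Z$, one computes $\Supph^{\geq m}(K(a_1,\dots,a_n)[-i]) = Z$ for $m\le i$ and $=\emptyset$ for $m>i$. By Lemma \ref{affine lemma} it therefore suffices to exhibit a perfect complex $B\in\cat U$ with $Z\subseteq\Supph^{\geq i}(B)$, for then $K(a_1,\dots,a_n)[-i]\in\langle B\rangle^{\leq 0}\subseteq\cat U$. (Alternatively, applying criterion (4) of Proposition \ref{compact characterization} to the set of perfect complexes contained in $\cat U$, it would suffice to show that every nonzero $A\in\cat U$ receives a nonzero map $S[n]\to A$ from a nonnegative shift of a perfect complex $S\in\cat U$.) Since $Z$ is Noetherian and $\Supph^{\geq i}$ of a perfect complex is closed, taking a finite direct sum over the generic points of $Z$ reduces the problem to the single-prime case: for each prime $\mathfrak p\in\gsupp{\cat U}(i)$, produce a perfect complex $B\in\cat U$ with $\mathfrak p\in\Supph^{\geq i}(B)$.

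The hard part — and the only place the homotopically smashing hypothesis is used — is this last step. By definition of $\gsupp{\cat U}$ there is some $E\in\cat U$ with $(H^iE)_{\mathfrak p}\neq 0$, but $E$ need not be perfect, and the task is to replace it by a perfect complex still lying in $\cat U$. Here I would argue that the coaisle $\cat V$, being closed under products (as a coaisle) and under directed homotopy colimits (by hypothesis), is a \emph{definable} subcategory of $\mathbf{D}(R)$, hence cut out by a family of coherent vanishing conditions; over the Noetherian ring $R$ one then shows these conditions are already detected on the residue field complexes $\kappa(\mathfrak p)$ and their shifts, which forces $\cat V$ to coincide with the coaisle of $\cat U_\phi$ and hence $\cat U = \cat U_\phi$ — in particular $\cat U$ then contains the perfect complex $B$ sought above. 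The main obstacle is exactly this: establishing that the coaisle of a homotopically smashing t-structure is definable — notably that it is closed under pure subobjects, which is not formal — and that over a Noetherian ring a definable coaisle is recovered from its residue-field data. This is the technical core of \cite{HN21}, and it is precisely the ingredient with no analogue over a general commutative ring.
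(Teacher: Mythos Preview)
The paper does not give its own proof of this statement: Theorem~\ref{telescope affine} is quoted verbatim from \cite[Theorem 1.1]{HN21} and used as a black box in the proof of Theorem~\ref{telescope conjecture}. There is therefore nothing in the paper to compare your proposal against.

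As to the proposal itself, your outline is broadly faithful to the strategy of \cite{HN21}: reduce to $\cat U = \cat U_\phi$ for $\phi = \gsupp{\cat U}$, and extract the missing inclusion from a structural property of the coaisle forced by the homotopically smashing hypothesis together with Noetherianity. You are also right that the entire difficulty is concentrated in the last paragraph, and you are honest that you are deferring it. Two small comments. First, your reduction via Lemma~\ref{affine lemma} to producing, for each $\mathfrak p \in \phi(i)$, a perfect $B \in \cat U$ with $\mathfrak p \in \Supph^{\geq i}(B)$ is a clean way to phrase the target, but note that in \cite{HN21} the argument does not proceed by manufacturing such a $B$ directly; rather one shows that membership of the stalk complexes $\kappa(\mathfrak p)[n]$ in the coaisle already determines the t-structure, and then matches this data with that of $\cat U_\phi$. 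Second, ``definable'' is the right keyword, but closure under pure quotients (not just pure subobjects) is the delicate point, and its proof in \cite{HN21} goes through the derivator formalism and results of Laking on purity; this is genuinely nontrivial and is exactly the ingredient your sketch is missing.
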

 
For a separated Noetherian scheme
$X$ and the derived category of quasi-coherent sheaves $\mathbf{D}(\mathrm{Qcoh}(X))$, 
which is equivalent to $\derive X$, we prove 
the following:

\begin{theorem}
\label{telescope conjecture}
	Any homotopically smashing tensor t-structure on $\mathbf{D}(\mathrm{Qcoh}(X))$ is 
	compactly generated.  
\end{theorem}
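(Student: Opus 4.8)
My plan is to deduce the result from the classification of Theorem \ref{Theorem 1}. Given a homotopically smashing tensor t-structure $(\cat{U}, \cat{V}[1])$ on $\mathbf{D}(\mathrm{Qcoh}(X)) \cong \derive{X}$, I would set $\phi \colonequals \gsupp{\cat{U}}$ and aim to show $\cat{U} = \cat{U}_{\phi}$; then $\cat{U}$ is compactly generated by Theorem \ref{Compact generation}. The preliminary observations are routine. First, $\phi$ is a filtration of Thomason subsets of $X$: each $\phi(i) = \bigcup_{E \in \cat{U}} \Supp(H^i(E))$ is a union of supports of quasi-coherent sheaves, hence specialization closed, hence Thomason since $X$ is Noetherian (Definition \ref{D Thomason subset}), and $\phi(i+1) \subset \phi(i)$ because $\cat{U}$ is closed under positive shifts. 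Second, $\cat{U} \subset \cat{U}_{\phi}$ is immediate from the definition of $\gsupp{\cat{U}}$. So the whole content is the reverse inclusion $\cat{U}_{\phi} \subset \cat{U}$, which I would establish by passing to affine opens.

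Fix an affine open $U \subset X$ with open immersion $j \colon U \rar X$, and write $\phi|_U$ for the filtration $i \mapsto \phi(i) \cap U$ on $U$. By Lemma \ref{localizes to open} the restriction $(\cat{U}|_U, \cat{V}|_U[1])$ is a t-structure on $\derive{U}$; the main obstacle is to show that it is again homotopically smashing. I expect this to follow from the equivalence
\[ A \in \cat{V}|_U \iff \mathbf{R}j_{*}A \in \cat{V}, \qquad A \in \derive{U}, \]
obtained by unwinding $\cat{V}|_U = (\cat{U}|_U)^{\perp}$ (whose defining generators are $j^{*}\cat{U}$) together with the adjunction $j^{*} \dashv \mathbf{R}j_{*}$, combined with the crucial point that, since $U$ is affine and $X$ is \emph{separated}, $j$ is an affine morphism, so $\mathbf{R}j_{*}$ agrees with the exact functor $j_{*}$; the latter, being locally restriction of scalars along a ring homomorphism, commutes with all filtered colimits of quasi-coherent sheaves, hence with colimits of directed systems in $\mathbf{C}(\mathrm{Qcoh}(U))$, which are formed termwise. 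Consequently a directed system in $\mathbf{C}(\mathrm{Qcoh}(U))$ with all terms in $\cat{V}|_U$ pushes forward to a directed system in $\mathbf{C}(\mathrm{Qcoh}(X))$ with all terms in $\cat{V}$ and with the same colimit, so homotopical smashing of $\cat{U}$ keeps that colimit in $\cat{V}$. This separatedness-dependent step is the one I expect to need the most care.

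Granting that, Theorem \ref{telescope affine} of Hrbek and Nakamura makes $(\cat{U}|_U, \cat{V}|_U[1])$ compactly generated, so by the affine case of the classification (Theorem \ref{Theorem 1}, or \cite[Theorem 3.11]{AJS10}) it equals $\cat{U}_{\gsupp{\cat{U}|_U}}$. Next I would identify $\gsupp{\cat{U}|_U}(i) = \phi(i) \cap U$: ``$\supseteq$'' because $j^{*}E \in \cat{U}|_U$ with $\Supp(H^i(j^{*}E)) = \Supp(H^i(E)) \cap U$ for $E \in \cat{U}$, and ``$\subseteq$'' because $j^{*}\cat{U} \subset \cat{U}_{\phi|_U}$ while $\cat{U}_{\phi|_U}$ is a cocomplete preaisle by Proposition \ref{U}, so $\cat{U}|_U = \langle j^{*}\cat{U} \rangle^{\leq 0} \subset \cat{U}_{\phi|_U}$. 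Hence $\cat{U}|_U = \cat{U}_{\phi|_U}$.

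Finally, to prove $\cat{U}_{\phi} \subset \cat{U}$: by Theorem \ref{Compact generation} one has $\cat{U}_{\phi} = \langle \cat{S}_{\phi} \rangle^{\leq 0}$ with $\cat{S}_{\phi} = \bigcup_i \mathrm{Perf}^{\leq i}_{\phi(i)}(X)$, and since $\cat{U}$ is a cocomplete preaisle it is enough to show $\cat{S}_{\phi} \subset \cat{U}$. Given $E \in \mathrm{Perf}^{\leq i}_{\phi(i)}(X)$, I would take its t-decomposition $\tau^{\leq}_{\cat{U}}E \rar E \rar \tau^{>}_{\cat{U}}E \rar \tau^{\leq}_{\cat{U}}E[1]$ for $(\cat{U}, \cat{V}[1])$; for any affine open $U$, Lemma \ref{localizes to open} says that applying $j^{*}$ gives the t-decomposition of $j^{*}E$ for $(\cat{U}|_U, \cat{V}|_U[1])$, and since $j^{*}E \in \mathrm{Perf}^{\leq i}_{\phi(i) \cap U}(U) \subset \cat{S}_{\phi|_U} \subset \cat{U}_{\phi|_U} = \cat{U}|_U$ this forces $j^{*}\tau^{>}_{\cat{U}}E \cong 0$. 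As the cohomology sheaves of $\tau^{>}_{\cat{U}}E$ vanish on each member of an affine cover of $X$, we get $\tau^{>}_{\cat{U}}E \cong 0$, i.e. $E \cong \tau^{\leq}_{\cat{U}}E \in \cat{U}$. This yields $\cat{S}_{\phi} \subset \cat{U}$, hence $\cat{U} = \cat{U}_{\phi}$, which is compactly generated. Apart from the homotopical-smashing-descent step, everything here is formal.
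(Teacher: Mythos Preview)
Your proposal is correct and follows essentially the same route as the paper: restrict the tensor t-structure to affine opens, use the adjunction $j^{*}\dashv j_{*}$ (with $j_{*}$ exact since $X$ is separated) to show the restriction remains homotopically smashing, invoke Hrbek--Nakamura on each affine, identify $\cat{U}|_U=\cat{U}_{\phi|_U}$, and then run the local-to-global t-decomposition argument to get $\cat{U}=\cat{U}_{\phi}$. Two small remarks: your direct observation that supports of quasi-coherent sheaves are specialization closed already suffices to see $\phi$ is Thomason (the paper routes this through the affine case instead), and your reduction to compact generators $\cat{S}_{\phi}$ in the final step is unnecessary---the same t-decomposition argument works verbatim for an arbitrary $A\in\cat{U}_{\phi}$, which is how the paper does it.
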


First, we prove some lemmas.  In this section $(\cat U, \cat V[1])$ will always mean a 
homotopically smashing tensor t-structure on $\mathbf{D}(\mathrm{Qcoh}(X))$ and
$U$ is an open affine subset of $X$.  

\begin{lemma}
\label{homotopically restricts}
	The restriction of $(\cat U,  \cat V[1])$ to $U$, that is, $(\cat U |_U , \cat V |_U[1])$ is a 
	homotopically smashing t-structure on $\mathbf{D}(\mathrm{Qcoh}(U))$.  
\end{lemma}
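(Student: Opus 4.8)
The plan is to use the adjunction $(j^{*},\mathbf{R}j_{*})$ for the open immersion $j\colon U\rar X$, together with the exactness of $j_{*}$, to transport the homotopy-colimit closure of the coaisle $\cat V$ down to $\cat V|_{U}$. Since Lemma \ref{localizes to open} already tells us that $(\cat U|_{U},\cat V|_{U}[1])$ is a t-structure on $\mathbf{D}(\mathrm{Qcoh}(U))$, with coaisle $\cat V|_{U}=(\cat U|_{U})^{\perp}$, the only thing left to verify is that $\cat V|_{U}$ is closed under homotopy colimits.

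First I would record the geometric input. As $X$ is separated and $U$ is open affine, the inclusion $j$ is an affine morphism, so $j_{*}\colon\mathrm{Qcoh}(U)\rar\mathrm{Qcoh}(X)$ is exact; and since $X$ (hence $U$) is Noetherian, $j_{*}$ commutes with filtered colimits. Applied termwise, $j_{*}$ therefore defines an exact functor $\mathbf{C}(\mathrm{Qcoh}(U))\rar\mathbf{C}(\mathrm{Qcoh}(X))$ which computes $\mathbf{R}j_{*}$ on the derived categories and which commutes with colimits of directed systems of complexes, these being formed degreewise. Next I would check that $\mathbf{R}j_{*}(\cat V|_{U})\subset\cat V$: for $F\in\cat V|_{U}=(\cat U|_{U})^{\perp}$ and any $E\in\cat U$, adjunction gives $\Hom_{\derive X}(E,\mathbf{R}j_{*}F)\cong\Hom_{\derive U}(j^{*}E,F)$, and the right-hand side vanishes because $j^{*}E\in j^{*}\cat U\subset\cat U|_{U}$; hence $\mathbf{R}j_{*}F\in\cat U^{\perp}=\cat V$.

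Now take any directed system $\{B_{i}\}$ in $\mathbf{C}(\mathrm{Qcoh}(U))$ with every $B_{i}\in\cat V|_{U}$. Then $\{j_{*}B_{i}\}$ is a directed system in $\mathbf{C}(\mathrm{Qcoh}(X))$ with each $j_{*}B_{i}\cong\mathbf{R}j_{*}B_{i}\in\cat V$, so because $(\cat U,\cat V[1])$ is homotopically smashing we get $\varinjlim_{i}j_{*}B_{i}\in\cat V$. Since $j_{*}$ commutes with this colimit of complexes, $\varinjlim_{i}j_{*}B_{i}\cong j_{*}\bigl(\varinjlim_{i}B_{i}\bigr)$, so $j_{*}(\varinjlim_{i}B_{i})\in\cat V$. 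Applying $j^{*}$ and using $j^{*}j_{*}\cong\mathrm{id}$ together with Lemma \ref{j!} (which carries $\cat V=\cat U^{\perp}$ into $(\cat U|_{U})^{\perp}=\cat V|_{U}$), we conclude $\varinjlim_{i}B_{i}\cong j^{*}j_{*}(\varinjlim_{i}B_{i})\in\cat V|_{U}$, as needed. The one point that requires care — and the only place where separatedness of $X$ enters — is the identification of the homotopy colimit taken in $\mathbf{C}(\mathrm{Qcoh}(U))$ with the one taken after applying $j_{*}$; this rests on $j$ being affine, so that $j_{*}$ is exact and agrees with $\mathbf{R}j_{*}$, and on Noetherianity, so that $j_{*}$ commutes with filtered colimits. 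Everything else is a formal consequence of the adjunction and of the lemmas already established.
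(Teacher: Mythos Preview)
Your proof is correct and follows essentially the same strategy as the paper: push the directed system forward along $j_{*}$, use the homotopically smashing hypothesis on $X$, then pull back via $j^{*}$ and invoke Lemma~\ref{j!}. The only minor difference is that you identify $\varinjlim j_{*}B_{i}$ with $j_{*}(\varinjlim B_{i})$ using that $j_{*}$ commutes with filtered colimits, whereas the paper instead applies $j^{*}$ directly to $\varinjlim j_{*}A_{i}$ and uses that $j^{*}$ (being exact and a left adjoint) commutes with the colimit; both routes arrive at the same place. Your version is in fact more explicit about why the argument works at the level of complexes---namely that $j$ is affine (by separatedness of $X$), so underived $j_{*}$ is exact and computes $\mathbf{R}j_{*}$---a point the paper leaves implicit.
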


\begin{proof}
	Recall $\cat U|_U = \langle j^* \cat U \rangle ^{\leq 0}$ and 
	$\cat V|_U = (\cat U|_U)^{\perp}$.
	By Lemma \ref{localizes to open} $(\cat U |_U , \cat V |_U[1])$ is a t-structure on 
	$\mathbf{D}(\mathrm{Qcoh}(U))$. Now we will show $(\cat U |_U , \cat V |_U[1])$ is 
	homotopically smashing.  Let $\{ A_i\}$ be a directed system in $\cat V|_U$.  By adjunction 
	isomorphism, if $A_i \in \cat V|_U$ then $j_*A_i \in \cat V$.  Consider the directed system 
	$\{ j_*A_i \}$ in \cat V,  since
	$\cat V$ is closed under homotopy colimits we have $\varinjlim j_*A_i \in \cat V$.   Since
	$j^*$ is an exact functor and $j^*j_*A_i \cong A_i$ the colimit of the system $\{A_i\}$ is
	$j^*(\varinjlim j_*A_i)$ which by Lemma \ref{j!} is in 
	$\cat V|_U$. 
	
\end{proof}

\begin{lemma}
\label{Theorem 2: Lemma 2}
	If the graded support of \cat U is $\phi$, then the graded support of $\cat U |_U$ 
	is $\phi |_U$.  
\end{lemma}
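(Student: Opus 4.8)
The plan is to compare the two filtrations degree by degree, using the description of graded support via compact generators from Lemma \ref{support} together with the fact that $j^*$ is an exact, coproduct-preserving tensor functor. First I would recall that by the definition of $\cat U|_U = \langle j^* \cat U \rangle^{\leq 0}_{\otimes}$ (or $\langle j^*\cat U\rangle^{\leq 0}$, which agree here since the restriction of a perfect generator is perfect), and by Lemma \ref{key1}(2) applied to $F = j^*$, the $\otimes$-aisle $\cat U|_U$ is generated by $j^*$ of a set of compact generators of $\cat U$. So if $\cat S$ is a set of compact generators of $\cat U$, then $\{ j^*S \mid S \in \cat S\}$ is a set of compact generators of $\cat U|_U$.

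Next I would apply Lemma \ref{support} to both sides: $\varphi_{\cat U}(i) = \bigcup_{S \in \cat S} \Supph^{\geq i}(S)$ and $\varphi_{\cat U|_U}(i) = \bigcup_{S \in \cat S} \Supph^{\geq i}(j^*S)$. The key pointwise fact is that for a perfect complex $S$ on $X$ one has $\Supph^{\geq i}(j^*S) = \Supph^{\geq i}(S) \cap U$: indeed $H^n(j^*S) = H^n(S)|_U = j^*H^n(S)$ since $j^*$ is exact, and the support of the restriction of a sheaf to an open set is the intersection of its support with that open set. Taking the union over $n \geq i$ and then over $S \in \cat S$, and using that intersection with $U$ commutes with unions, gives $\varphi_{\cat U|_U}(i) = \left(\bigcup_{S \in \cat S} \Supph^{\geq i}(S)\right) \cap U = \varphi_{\cat U}(i) \cap U = \phi(i) \cap U = (\phi|_U)(i)$, where $\phi|_U$ denotes the restriction of the filtration $\phi$ to $U$ as used just before Lemma \ref{Lemma 2}. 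This is exactly the claim.

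The main obstacle, such as it is, is bookkeeping rather than substance: one must make sure the generating set transported to $U$ really does compute $\varphi_{\cat U|_U}$ via Lemma \ref{support}, which requires $\cat U|_U$ to be a compactly generated $\otimes$-aisle of $\derive U$ (established in the unnamed lemma right after Lemma \ref{localizes to open}) and requires identifying its generators with $j^*\cat S$; this identification is where Lemma \ref{key1}(2) does the work, since a priori $\langle j^*\cat S\rangle^{\leq 0}_\otimes$ need not obviously equal $\langle j^*\cat U\rangle^{\leq 0}_\otimes = \cat U|_U$. Once that is in place, the rest is the elementary sheaf-support computation above, so I would keep the write-up short: state that generators restrict to generators, invoke Lemma \ref{support} twice, and do the one-line support identity $\Supph^{\geq i}(j^*S) = \Supph^{\geq i}(S)\cap U$.
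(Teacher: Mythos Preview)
Your argument has a genuine gap: it is circular in the context where the lemma is used. In Section~\ref{section 5} the standing hypothesis is only that $(\cat U,\cat V[1])$ is a \emph{homotopically smashing} tensor t-structure; it is not yet known to be compactly generated. Indeed, Lemma~\ref{Theorem 2: Lemma 2} is one of the ingredients in the proof of Theorem~\ref{telescope conjecture}, whose conclusion is precisely that $\cat U$ is compactly generated. Your proof invokes Lemma~\ref{support}, which applies only to aisles of the form $\taisle S$ for a set $\cat S$ of compact objects, and the unnamed lemma after Lemma~\ref{localizes to open}, whose hypothesis is that $\cat U$ is a compactly generated \tensor aisle. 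Neither hypothesis is available at this point, so you would be assuming the conclusion of the theorem you are helping to prove.

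The paper's proof sidesteps this by arguing directly from Definition~\ref{Phi U}, with no compact generation needed. For one inclusion, since $H^i(j^*A)=j^*H^i(A)$ one has $\Supp(H^i(j^*A))=\Supp(H^i(A))\cap U\subset\phi(i)\cap U$ for every $A\in\cat U$, so $j^*\cat U\subset\cat U_{(\phi|_U)}$; as $\cat U_{(\phi|_U)}$ is a cocomplete preaisle (Proposition~\ref{U}), this forces $\cat U|_U=\langle j^*\cat U\rangle^{\leq 0}\subset\cat U_{(\phi|_U)}$, hence $\gsupp{\cat U|_U}(i)\subset\phi|_U(i)$. For the reverse inclusion, any $x\in\phi|_U(i)=\phi(i)\cap U$ lies in $\Supp(H^i(E))$ for some $E\in\cat U$, and then $j^*E\in\cat U|_U$ witnesses $x\in\gsupp{\cat U|_U}(i)$. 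Your support identity $\Supph^{\geq i}(j^*S)=\Supph^{\geq i}(S)\cap U$ is exactly the right computation; the point is to apply it to arbitrary objects of $\cat U$ rather than to a (not yet existing) set of compact generators.
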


\begin{proof}
	Consider $\cat U _{(\phi |_U)}$ the associated subcategory of the filtration $\phi|_U$ in 
	$\mathbf{D}(\mathrm{Qcoh}(U))$,  it is a cocomplete preaisle by Proposition \ref{U}.
	We have $j^*A \in \cat U _{(\phi |_U)}$ for any $A \in \cat U$ therefore 
	$\cat U|_U = \langle j^* \cat U \rangle ^{\leq 0} \subset \cat U _{(\phi |_U)}$.  So the graded 
	support of $\cat U|_U$ is contained in $\phi |_U$.  Next,  given $x \in \phi|_U(i)$,  we have 
	$x \in \phi (i)$ which means there is an object $E$ in \cat U such that $x \in \Supp (H^i(E))$.  
	As $j^*E \in \cat U|_U$ we get the graded support of $\cat U |_U$ is $\phi|_U$. 
\end{proof}

\begin{lemma}
\label{Theorem 2: Lemma 3}
	If the graded support of \cat U is $\phi$, then 
	$\cat U|_U = \cat U _{(\phi |_U)}$.
\end{lemma}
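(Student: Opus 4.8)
The plan is to upgrade the inclusion $\cat U|_U \subseteq \cat U_{(\phi|_U)}$, which was already established in the proof of Lemma \ref{Theorem 2: Lemma 2}, to an equality by recognizing the left-hand side as a compactly generated \tensor aisle of the affine scheme $U$ and then applying the classification of Theorem \ref{Theorem 1} over $U$.

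First I would record that, since $U$ is affine, $\mathbf{D}(\mathrm{Qcoh}(U))$ is (equivalent to) $\mathbf{D}(R)$ for $R = \Gamma(U,\CO_U)$ a commutative Noetherian ring. By Lemma \ref{homotopically restricts}, the restricted pair $(\cat U|_U,\cat V|_U[1])$ is a homotopically smashing t-structure on $\mathbf{D}(R)$, so by the Hrbek--Nakamura theorem (Theorem \ref{telescope affine}) it is compactly generated; equivalently, $\cat U|_U$ is a compactly generated aisle. On the affine scheme $U$ every cocomplete preaisle is a \tensor preaisle by Proposition \ref{affine aisle}, hence $\cat U|_U$ belongs to \caisle U.

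Next I would apply Theorem \ref{Theorem 1} to the Noetherian scheme $U$: the maps $\Phi$ and $\Psi$ are mutually inverse, so $\Psi\circ\Phi(\cat U|_U) = \cat U|_U$, that is, $\cat U_{\gsupp{\cat U|_U}} = \cat U|_U$. By Lemma \ref{Theorem 2: Lemma 2} the graded support $\gsupp{\cat U|_U}$ equals $\phi|_U$, and therefore $\cat U|_U = \cat U_{(\phi|_U)}$, which is exactly the assertion.

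The only genuinely new ingredient beyond machinery already in the paper is the affine telescope theorem of Hrbek and Nakamura, which is what forces the restricted aisle to be compactly generated; this is the step I expect to be the crux, since without it one has only the a priori inclusion $\cat U|_U \subseteq \cat U_{(\phi|_U)}$ and no control over whether the target is strictly larger. The remaining points --- the identification $\mathbf{D}(\mathrm{Qcoh}(U)) \simeq \mathbf{D}(R)$, the automatic tensor compatibility over an affine base, and the bijectivity of $\Phi,\Psi$ --- are either standard or established earlier, so I anticipate no further obstacle.
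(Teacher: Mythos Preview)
Your proof is correct and follows essentially the same route as the paper's: restrict to the affine open, invoke Lemma~\ref{homotopically restricts} to see that $\cat U|_U$ is homotopically smashing, apply the Hrbek--Nakamura theorem (Theorem~\ref{telescope affine}) to deduce compact generation, and then use Lemma~\ref{Theorem 2: Lemma 2} together with the classification to identify $\cat U|_U$ with $\cat U_{(\phi|_U)}$. The only cosmetic difference is that you spell out the automatic tensor compatibility over an affine base via Proposition~\ref{affine aisle} and cite Theorem~\ref{Theorem 1} explicitly, whereas the paper simply says ``by the classification of compactly generated t-structures.''
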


\begin{proof}
	By Lemma \ref{homotopically restricts} we know $\cat U |_U$ is homotopically 
	smashing.  As $U$ is affine, Theorem \ref{telescope affine} implies $\cat U |_U$ is compactly 
	generated.  By Lemma \ref{Theorem 2: Lemma 2} the graded support of $\cat U |_U$ is $
	\phi|_U
	$.  By the classification of compactly generated t-structures, we conclude
	 $\cat U|_U = \cat U _{(\phi |_U)}$.
\end{proof}
 
 \begin{lemma}
 \label{Theorem 2: Lemma 4}
	If the graded support of \cat U is $\phi$ then $\phi$ is a Thomason filtration. 
\end{lemma}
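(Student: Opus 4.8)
The plan is to verify the two defining properties of a filtration of Thomason subsets separately: the monotonicity $\phi(i+1) \subseteq \phi(i)$, and the fact that each $\phi(i)$ is a Thomason subset of $X$ — which, since $X$ is Noetherian, amounts to being specialization closed. The monotonicity I would dispatch first, and it is immediate: if $x \in \phi(i+1)$ there is $E \in \cat U$ with $x \in \Supp(H^{i+1}(E))$; since $\cat U$ is closed under positive shifts, $E[1] \in \cat U$, and $H^i(E[1]) \cong H^{i+1}(E)$, so $x \in \phi(i)$. The substance of the lemma is therefore the specialization-closedness of each $\phi(i)$.

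For that, I would reduce to the affine case, which has effectively already been handled. By Lemma \ref{Theorem 2: Lemma 3}, for every open affine $U \subseteq X$ we have $\cat U|_U = \cat U_{(\phi|_U)}$; in particular $\cat U|_U$ is compactly generated (this is exactly how that lemma was proved, via Lemma \ref{homotopically restricts} and Theorem \ref{telescope affine}). Since $U$ is affine, every cocomplete preaisle of $\derive U$ is a $\otimes$-preaisle by Proposition \ref{affine aisle}, so $\cat U|_U$ is a compactly generated $\otimes$-aisle of $\derive U$; by Lemma \ref{Thomason subset} its graded support is then a filtration of Thomason subsets of $U$, and by Lemma \ref{Theorem 2: Lemma 2} this graded support is exactly $\phi|_U$. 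Hence $\phi(i) \cap U$ is a specialization closed subset of $U$, for every open affine $U$ and every $i$.

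It remains to glue, and here I would use that specialization-closedness is a local property on an affine open cover. Fix $i$, set $Z = \phi(i)$, take $x \in Z$ and $y$ in the closure of $\{x\}$ in $X$, and choose an open affine $U$ containing $y$. Because $y$ lies in the closure of $\{x\}$, every open set containing $y$ must contain $x$; in particular $x \in U$, so $x \in Z \cap U$. Moreover, as $U$ is open, the closure of $\{x\}$ in $U$ coincides with its closure in $X$ intersected with $U$, so $y$ belongs to the closure of $\{x\}$ in $U$. Since $Z \cap U$ is specialization closed in $U$, this gives $y \in Z \cap U \subseteq Z$. Thus $Z$ is specialization closed, hence a Thomason subset of the Noetherian scheme $X$, and $\phi$ is a filtration of Thomason subsets.

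I do not expect a genuine obstacle: all the hard inputs — the affine telescope theorem and the affine classification of compactly generated t-structures — have already been imported, and the monotonicity is formal. The only point requiring a moment's care is the final gluing step, specifically the observation that if $y$ lies in the closure of $x$ then any open affine around $y$ already contains $x$; this is what makes specialization-closedness descend from the cover to $X$. Everything else is bookkeeping.
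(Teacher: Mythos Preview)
Your proposal is correct and follows essentially the same route as the paper: restrict to an open affine $U$, use Lemma~\ref{Theorem 2: Lemma 3} to see that $\cat U|_U$ is compactly generated with graded support $\phi|_U$, invoke the affine classification (via Lemma~\ref{Thomason subset}) to conclude $\phi|_U$ is a filtration of Thomason subsets, and then glue using the fact that specialization-closedness is a local property on a Noetherian scheme. Your treatment is more explicit than the paper's---you spell out the monotonicity $\phi(i+1)\subset\phi(i)$ and the gluing step in detail---but the argument is the same.
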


\begin{proof}
	By Lemma \ref{Theorem 2: Lemma 3} we have $\cat U|_U = \cat U _{(\phi |_U)}$ and
	$\cat U|_U$ is compactly generated.  Since the graded support of a compactly generated
	aisle is a Thomason filtration we get $\phi |_U$ is a Thomason 
	filtration.  In the Noetherian case, Thomason subsets and specialization 
	closed subsets are the same.  Specialization closed is a local property therefore the graded 
	support of \cat U is a Thomason filtration.

\end{proof}
 
 \begin{proof}[Proof of Theorem \ref{telescope conjecture}]
 We denote the graded support of \cat U by $\phi$ and consider $\cat U _{\phi}$ the 
 subcategory associated with $\phi$.   Clearly,  $\cat U \subset \cat U _{\phi}$.
 
 For $A \in \cat U_{\phi}$, consider the following t-decomposition triangle for \cat U
 \[ \tau^{\leq} _{\cat U} A \rar A \rar \tau^{>} _{\cat U} A \rar \tau^{\leq} _{\cat U} A [1]. \] 
 
By Lemma \ref{localizes to open} restricting to $U$ gives a t-decomposition triangle for 
$\cat U|_U$ \[ j^*\tau^{\leq} _{\cat U} A \rar j^*A \rar j^*\tau^{>} _{\cat U} A  \rar
 j^*\tau^{\leq} _{\cat U} A [1]. \] 
 
 Since $j^*A \in \cat U _{(\phi |_U)}$ and $\cat U|_U = \cat U _{(\phi |_U)}$ the map
 $ j^*\tau^{\leq} _{\cat U} A \rar j^*A$ is an isomorphism.  As it is true for each open affine 
 subset of $X$,
 we get $\tau^{\leq} _{\cat U} A \rar A $ is an isomorphism.  Thus 
 $A \in \cat U$ and this proves $\cat U = \cat U_{\phi}$.  By Lemma \ref{Theorem 2: Lemma 4}, 
 $\phi$ is a Thomason filtration. Now by Theorem \ref{Theorem 1}, the 
 subcategory $\cat U _{\phi}$ is a compactly generated \tensor aisle.  Therefore $\cat U$ is 
 compactly generated. 
\end{proof}

\begin{remark}
\label{remark 3}
	By Remark \ref{smashing ideal}  and Proposition \ref{tensor ideal}, the aisle of a 
	homotopically smashing stable tensor t-structure on $\mathbf{D}(\mathrm{Qcoh}(X))$ is a 
	smashing \tensor ideal of $\mathbf{D}(\mathrm{Qcoh}(X))$.  Therefore, Theorem 
	\ref{telescope conjecture} provides another proof of the tensor telescope conjecture for 
	separated Noetherian schemes.
\end{remark}

\section*{Acknowledgement}

We are grateful for the excellent work environment and the assistance of the support 
staff of HRI,  Prayagraj.  The second author is supported in part by the INFOSYS scholarship. 
We thank both Alexander Clark and Suresh Nayak for pointing out an error in a 
lemma which was used to prove a crucial step in the earlier manuscript.  We are thankful to 
Kapil Hari Paranjape and Michal Hrbek for reading the manuscript and suggesting 
improvements.  The authors also thank the anonymous referee for carefully going through the 
manuscript and providing valuable comments and suggestions.  
 
\bibliographystyle{alpha}
\bibliography{Project1}

 \end{document}